\def\ham{\mathbb H_M}
\def\hamp{\mathbb H_{+}}
\def\hamm{\mathbb H_{-}}
\def\hams{{\mathcal H}_s}
\def\hama{\mathbb H_{M^\alpha}}
\newcommand{\cupdot}{\mathbin{\mathaccent\cdot\cup}}
\newcommand{\twoline}[2]{\genfrac{}{}{0pt}{}{#1}{#2}}
\newtheorem{theorem}{Theorem}
\newtheorem{lemma}[theorem]{Lemma}
\newtheorem{corollary}[theorem]{Corollary}
\newtheorem*{corollary*}{Corollary}
\newtheorem{definition}[theorem]{Definition}
\newtheorem*{definition*}{Definition}
\newtheorem*{theorem*}{Theorem}
\author[M. Paluszynski]{Maciej Paluszynski}
\address{\small Instytut Matematyczny, Uniwersytet Wroc\l awski,
 pl. Grunwaldzki 2/4, \hbox{50-384} Wroc\-\l aw, Poland}
\email{mpal@math.uni.wroc.pl}
\author[J. Zienkiewicz]{Jacek Zienkiewicz}
\address{\small
 Instytut Matematyczny, Uniwersytet Wroc\l awski,
 pl. Grunwaldzki 2/4, \hbox{50-384} Wroc\-\l aw, Poland}
\email{zenek@math.uni.wroc.pl}
\title[ ]{ Weak type (1,1) estimates for inverses  of discrete rough singular integral operators.}
\begin{document}
\begin{abstract} We obtain  weak type (1,1) estimates for the  inverses
of truncated discrete  rough Hilbert transform.  We include an example showing that our result is sharp.
One of the ingredients of the proof are regularity estimates for convolutions of singular measure
associated with the sequence $[m^\alpha]$, see \cite {UZ}.

\end{abstract}

\keywords{Singular Integral Operators, Hilbert Transform.}
\subjclass[2010]{42B25, 11P05}
\date{\today}
\thanks{The   second named author was supported by the NCN grant   UMO-2014/15/B/ST1/00060.  }
 \maketitle
\baselineskip=18pt

\section{Introduction} Suppose $1<\alpha\le1+\frac{1}{1000}$, $0<\theta<1$ are fixed parameters.  For a non-negative number $M$ we consider a family of operators on $\ell^2(\mathbb Z)$
\begin{equation}\label{ham_def}
\begin{aligned}
&\ham f(x)=\sum_{\twoline{M^\theta\le s \le M}{s-\text{dyadic}}}\hams f(x)=\\
&=\sum_{\twoline{M^\theta\le s \le M}{s-\text{dyadic}}}
\sum_{m>0}\varphi_s \Big(\frac {m^\alpha}{s}\Big) \frac{f(x-[m^\alpha])-f(x+[m^\alpha])}{m}, \quad x\in\mathbb Z
\end{aligned}
\end{equation}
for some sequence $\varphi_s$ which is uniformly in $C_c^\infty(\frac{1}{2},2)$. It is by now a
routine fact that the operators $\ham$, the truncated Hilbert transforms, are bounded on $\ell^p$,
$1<p<\infty$ with norm estimates uniform in $M$ and $\theta$. The analogous weak type $(1,1)$
estimate seems to be unknown. For a fixed $\theta$, by a rather routine application of the methods
of \cite{C1}, \cite{S} and \cite{UZ} the operators $\ham$ can be shown to be of weak type (1,1)
uniformly in $M$. The subject of the current paper has been inspired by \cite{C}. There, a theorem
has been proved (\cite{C}, Theorem 3), which for our purposes can be formulated as follows:
\begin{theorem*} Suppose $K$ is a kernel in $\mathbb R^d$ satisfying $K(x)=\Omega(x)/|x|^d$, where $\Omega$ is homogeneous of degree 0, $\Omega\in L^q(S^{d-1})$ and has mean 0. Denote $Kf=K*f$. Suppose further that for some $\lambda\in\mathbb C$ the operator $\lambda\,\rm{Id}+K$ is invertible in $L^2(\mathbb R^d)$. Then $(\lambda\,\rm{Id}+K)^{-1}$ is of form $\Lambda\,\rm{Id}+K'$, where the kernel $K'$ satisfies the same assumptions as $K$.
\end{theorem*}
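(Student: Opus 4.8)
\medskip\noindent\textbf{Proof proposal.}
The plan is to transfer the problem to the Fourier side — where every operator in sight becomes multiplication by a function homogeneous of degree $0$, i.e. by a function on $S^{d-1}$ — and to identify the symbols of operators of the stated type with a concrete Sobolev space on the sphere, which turns out to be a Banach algebra whose Gelfand spectrum is exactly $S^{d-1}$, hence is closed under inversion.

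\emph{Step 1: reduction to a statement about symbols.} Since $\lambda\,\mathrm{Id}+K$ commutes with translations and with dilations, so does $(\lambda\,\mathrm{Id}+K)^{-1}$; the latter is therefore multiplication by $1/a$ on the Fourier transform side, where $a=\lambda+m$ and $m$ is the Calder\'on--Zygmund symbol of $K$, which is homogeneous of degree $0$ and, for $|\xi|=1$, is given by
\[
m(\xi)=(\mathcal M\Omega)(\xi):=\int_{S^{d-1}}\Omega(\theta)\,\Big(\log\frac1{|\theta\cdot\xi|}-\frac{i\pi}{2}\,\mathrm{sgn}(\theta\cdot\xi)\Big)\,d\theta .
\]
For $q>1$ the kernel of $\mathcal M$ lies, as a function of $\theta$, in $L^{q'}$ uniformly in $\xi$, so $\mathcal M\colon L^q(S^{d-1})\to C(S^{d-1})$ and in particular $a$ is continuous; hence, by Plancherel, $\lambda\,\mathrm{Id}+K$ is invertible on $L^2$ if and only if $\inf_{S^{d-1}}|a|>0$, i.e. $0\notin a(S^{d-1})$. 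Let $\mathcal A:=\mathcal M\big(L^q(S^{d-1})\big)\subset C(S^{d-1})$. Once we know $1/a\in\mathcal A$, write $1/a=\mathcal M\Omega'$ with $\Omega'\in L^q(S^{d-1})$ and subtract from $\Omega'$ its mean (which only changes $\mathcal M\Omega'$ by a constant, as $\mathcal M\mathbf 1$ is a constant function), obtaining $1/a=\Lambda\mathbf 1+\mathcal M\Omega'_0$ with $\Omega'_0$ of mean $0$; then $(\lambda\,\mathrm{Id}+K)^{-1}$ and $\Lambda\,\mathrm{Id}+K'$, $K'(x)=\Omega'_0(x)/|x|^d$, are bounded $L^2$ operators with the same symbol $1/a$ and hence equal, and $K'$ satisfies exactly the hypotheses imposed on $K$. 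So it suffices to prove $1/a\in\mathcal A$.

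\emph{Step 2: $\mathcal A$ is a Banach algebra, in fact a Sobolev space on the sphere (the main point).} By the Bochner--Hecke computation, $\mathcal M$ acts on the spherical harmonics of degree $k$ as a scalar $\gamma_k$ that is nonzero for every $k$ and satisfies $\gamma_k\approx\langle k\rangle^{-d/2}$ as $k\to\infty$ (a ratio of Gamma functions), with $k\mapsto\langle k\rangle^{d/2}\gamma_k$ a H\"ormander--Mikhlin symbol on $S^{d-1}$. Hence $\mathcal M$ is an isomorphism of $L^q(S^{d-1})$ onto the Bessel--potential space $W^{d/2,q}(S^{d-1})$, and we equip $\mathcal A$ with the transported norm, so $\|\cdot\|_{\mathcal A}\approx\|\cdot\|_{W^{d/2,q}}$. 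Assuming $q$ is large enough that $\tfrac d2>\tfrac{d-1}{q}$, hence $W^{d/2,q}(S^{d-1})\hookrightarrow L^\infty$ (for $d=2$ this is merely $q>1$), the Kato--Ponce inequality $\|fg\|_{W^{s,q}}\le C\big(\|f\|_{W^{s,q}}\|g\|_{L^\infty}+\|f\|_{L^\infty}\|g\|_{W^{s,q}}\big)$ shows that $W^{d/2,q}(S^{d-1})$, and therefore $\mathcal A$, is a commutative Banach algebra under pointwise multiplication; it is unital because $\mathcal A$ contains every spherical polynomial, in particular $\mathbf 1$. I expect this step to be the crux: the a priori arbitrary bounded homogeneous symbol $1/a$ must be recognised as belonging to a \emph{fixed} smoothness class, and this hinges on the exact smoothing order $d/2$ of $\mathcal M$; the rest of the argument is soft.

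\emph{Step 3: inverse--closedness of $\mathcal A$.} Iterating the Kato--Ponce inequality yields $\|f^{n}\|_{W^{d/2,q}}\le C\,n\,\|f\|_{L^\infty}^{\,n-1}\|f\|_{W^{d/2,q}}$, so the spectral radius in $\mathcal A$ satisfies $r_{\mathcal A}(f)=\lim_n\|f^{n}\|_{\mathcal A}^{1/n}\le\|f\|_{L^\infty}$ for every $f\in\mathcal A$; since each point evaluation $\delta_\omega$, $\omega\in S^{d-1}$, is a character of $\mathcal A$, equality holds. Consequently any character $\chi$ of $\mathcal A$ satisfies $|\chi(f)|\le r_{\mathcal A}(f)=\|f\|_{L^\infty}$, so $\chi$ extends continuously to the sup-norm closure of $\mathcal A$, which is $C(S^{d-1})$ because the spherical polynomials are dense; the extension is a character of $C(S^{d-1})$, i.e. a point evaluation. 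Thus the Gelfand spectrum of $\mathcal A$ equals $S^{d-1}$ and $\sigma_{\mathcal A}(g)=g(S^{d-1})$ for every $g\in\mathcal A$. Applying this with $g=a$ and invoking $0\notin a(S^{d-1})$ from Step 1, we conclude that $a$ is invertible in $\mathcal A$, i.e. $1/a\in\mathcal A$; by Step 1 this finishes the proof.
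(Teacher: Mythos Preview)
The paper does not prove this statement; it is quoted from Christ~\cite{C} as motivation. What the paper \emph{does} prove is the discrete analogue, and its engine (Theorem~\ref{5}) is precisely Christ's scheme from~\cite{C}: a mixed submultiplicativity $\|T_1T_2\|_A\le C(\|T_1\|_{2\to2}\|T_2\|_A+\|T_1\|_A\|T_2\|_{2\to2})$ is iterated to give $\|T^n\|_A\le Cn\|T\|_{2\to2}^{\,n-1}\|T\|_A$, which forces the $A$-spectral radius down to the $L^2$ operator norm, after which a Neumann series (equivalently, Gelfand theory) delivers the inverse. Your proposal follows this same scheme exactly --- your Kato--Ponce inequality is the submultiplicativity, and your Step~3 is the spectral-radius/Gelfand version of the paper's Neumann sum --- so at the structural level your argument coincides with Christ's and with the template the paper adopts.

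Where you diverge is in the packaging of Step~2: you identify $\mathcal M(L^q)$ with $W^{d/2,q}(S^{d-1})$ and invoke Kato--Ponce on that Sobolev space. Two points of friction: (i)~as you yourself note, the embedding $W^{d/2,q}\hookrightarrow L^\infty$ requires $q>2(d-1)/d$, so in dimensions $d\ge3$ your argument as written misses small $q$, whereas Christ's theorem covers every $q>1$; (ii)~the Bochner--Hecke eigenvalues carry a factor $i^{-k}$, so $\langle k\rangle^{d/2}\gamma_k$ is not literally a H\"ormander--Mikhlin symbol in $k$, and the claimed isomorphism $\mathcal M\colon L^q\to W^{d/2,q}$ needs an extra step (e.g.\ split by parity of the harmonic degree). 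Christ in~\cite{C} sidesteps both issues by proving the key submultiplicativity \emph{directly} for the transported norm $\|m\|_{\mathcal A}=\inf\{|\lambda|+\|\Omega\|_{L^q}:m=\lambda+\mathcal M\Omega\}$, valid for all $q>1$ without any Sobolev detour; the paper's Theorem~\ref{5} is the abstract distillation of exactly that idea.
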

It immediately implies:
\begin{corollary*}[\cite{C}, \cite{C1}, \cite{CR}, \cite{S1}]
In the setting of the above theorem, the operator $(\lambda\,\rm{Id} +K)^{-1}$ is of weak type (1,1).
\end{corollary*}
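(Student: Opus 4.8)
The plan is to reduce the statement to the (by now classical) weak type $(1,1)$ bound for rough homogeneous singular integrals, using the structural description of the inverse furnished by the Theorem. Assume $\lambda\,\operatorname{Id}+K$ is invertible on $L^2(\mathbb R^d)$. By the Theorem its inverse has the form $\Lambda\,\operatorname{Id}+K'$, where $K'(x)=\Omega'(x)/|x|^d$ with $\Omega'$ homogeneous of degree $0$, $\Omega'\in L^q(S^{d-1})$ and $\int_{S^{d-1}}\Omega'=0$. Hence it suffices to show that each of the two summands $f\mapsto\Lambda f$ and $f\mapsto K'*f$ is of weak type $(1,1)$, and then to add the resulting estimates.

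The first summand is trivial: $f\mapsto\Lambda f$ is bounded on $L^1(\mathbb R^d)$ with norm $|\Lambda|$, hence a fortiori of weak type $(1,1)$. The second summand $T'f=K'*f$ is exactly a rough homogeneous Calder\'on--Zygmund operator with integrability exponent $q>1$ and vanishing angular mean, and such operators are of weak type $(1,1)$ by the cited works \cite{C1}, \cite{CR}, \cite{S1}. I would recall the mechanism of that (genuinely nontrivial) input: one takes a Calder\'on--Zygmund decomposition $f=g+b$ at height $\alpha$, with $b=\sum_j b_j$ supported on disjoint cubes $Q_j$; the good function $g$ is controlled through the $L^2$ boundedness of $T'$ (available since $\Omega'\in L^q\subset L^1$ has mean zero), while the bad function is \emph{not} handled by escaping a fixed dilate of each $Q_j$ — the kernel is too rough for the pointwise H\"ormander condition — but by a Fourier/almost-orthogonality argument, a $TT^*$ estimate in the Christ--Rubio de Francia scheme or the microlocal Littlewood--Paley decomposition of Seeger, which exploits the cancellation of $b_j$ against smoothly truncated pieces of $K'$ at scales comparable to $\operatorname{diam} Q_j$.

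Putting the two pieces together via the inclusion $\{|u+v|>\alpha\}\subseteq\{|u|>\alpha/2\}\cup\{|v|>\alpha/2\}$ gives, for every $\alpha>0$,
\begin{equation*}
\bigl|\{\,x:|(\lambda\,\operatorname{Id}+K)^{-1}f(x)|>\alpha\,\}\bigr|
\;\le\;\bigl|\{\,|\Lambda f|>\tfrac{\alpha}{2}\,\}\bigr|+\bigl|\{\,|T'f|>\tfrac{\alpha}{2}\,\}\bigr|
\;\le\;\frac{C}{\alpha}\,\|f\|_{L^1(\mathbb R^d)},
\end{equation*}
which is the asserted estimate. In this sense the corollary carries no independent difficulty: the whole burden lies in the Theorem (inversion preserves the rough Calder\'on--Zygmund class) and in the cited weak type $(1,1)$ theorem for rough kernels, the latter being the one substantial obstacle.
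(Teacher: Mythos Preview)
Your proposal is correct and matches the paper's approach exactly: the paper states the corollary as an immediate consequence of the Theorem together with the cited weak type $(1,1)$ results for rough kernels, and you have simply spelled out that deduction. There is no independent proof in the paper beyond the words ``It immediately implies,'' so your write-up is, if anything, more detailed than what the paper provides.
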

The principal object of  the current work is to extend the above theorem to the case of discrete
rough Hilbert transforms $\ham$.  For a fixed $\theta$ we prove    the uniform in $M$ estimates for
$\|(\lambda\,\rm{Id}+\ham)^{-1}\|_{\ell^1\to\ell^{1,\infty}}$, provided such an estimate exists in
the sense of $\ell^2$. By the previous general remark, this goal is accomplished through the
following representation theorem, which is the main result of this paper

\begin{theorem}\label{main_thm}
Suppose $1<\alpha\le1+\frac{1}{1000}$ and let $\theta $ be such, that $\alpha-1<\theta<1$.
 Fix $\lambda\in\mathbb C$ and suppose that for some constant $C_I$ we have
\begin{equation}\label{main_op}
\Big\|\big(\lambda\,\rm{Id}+\ham\big)^{-1}\Big\|_{\ell^2\to\ell^2}\le C_I, \quad\text{for }M\ge M_0.
\end{equation}
Then, there exists $M_1=M_1(C_I,\lambda)$ such that for $M\ge M_1$ the kernel of the operator $(\lambda\,\rm{Id}+\ham)^{-1}$ has the form
\begin{equation}\label{main_decomp}
\lambda_I\,\textup{Id}+\beta_I\,\ham+K,
\end{equation}
where $K$ is the classical discrete Calder\'on-Zygmund kernel, and we have a uniform in $M\ge M_1$ estimate
\begin{equation}\label{main_est}
\big|\lambda_I\big|+\big|\beta_I\big|+\|K\|_{\ell^2\to\ell^2}+\|K\|_{CZ} \le C_1(C_I,\lambda),
\end{equation}
where
\begin{equation*}
\|K\|_{CZ}=\sup_y\sum_{|x|\ge2|y|}|K(x-y)-K(x)|.
\end{equation*}
Moreover, the above restriction on $\theta$ is sharp (we make this statement    precise in   Theorem \ref{4_ex}
in the
next section).
\end{theorem}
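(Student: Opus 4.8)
\emph{Strategy.} The plan is to reduce the statement to classical Calder\'on--Zygmund theory by placing $\lambda\,\mathrm{Id}+\ham$ inside a small algebra of convolution operators that is stable under composition and inverse-closed in $B(\ell^2)$. Let $\mathcal A$ be the class of convolution operators on $\ell^2(\mathbb Z)$ of the form $c_0\,\mathrm{Id}+c_1\,\ham+K$ with $c_0,c_1\in\mathbb C$ and $K$ a discrete Calder\'on--Zygmund kernel, normed by $\|c_0\,\mathrm{Id}+c_1\,\ham+K\|_{\mathcal A}=|c_0|+|c_1|+\|K\|_{\ell^2\to\ell^2}+\|K\|_{CZ}$. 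Everything hinges on two structural facts about $\ham$, for which the hypotheses $1<\alpha\le1+\tfrac1{1000}$, $\alpha-1<\theta<1$ and the regularity estimates of \cite{UZ} are used:
\begin{enumerate}[(A)]
\item $\ham^2=a_M\,\mathrm{Id}+G_M$, where $G_M$ is a discrete Calder\'on--Zygmund kernel with $\|G_M\|_{\ell^2\to\ell^2}+\|G_M\|_{CZ}\le C$ and $|a_M|\le CM^{-\delta}$ for some $\delta>0$, uniformly in $M\ge M_1$;
\item if $K$ is a discrete Calder\'on--Zygmund kernel, then $\ham K$ and $K\ham$ belong to $\mathcal A$ with $\|\ham K\|_{\mathcal A}+\|K\ham\|_{\mathcal A}\le C\big(\|K\|_{\ell^2\to\ell^2}+\|K\|_{CZ}\big)$, uniformly in $M\ge M_1$.
\end{enumerate}
In (A) one notes first that $\ham$ has an odd convolution kernel, so $\ham^2$ is an even convolution operator and no $\ham$-term occurs --- the $CZ$-norm of the kernel of $\ham$ is of order $\log M$, hence cannot be absorbed into $G_M$; the $\mathrm{Id}$-part then records the near-diagonal mass of the self-convolution of the underlying measure.

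\emph{Reduction, granting (A) and (B).} Evaluating the (purely imaginary and odd) multiplier $\widehat{\ham}$ at $\xi=0$, where it vanishes, \eqref{main_op} gives $|\lambda|\ge C_I^{-1}$; in particular $\lambda\neq0$. Since $\widehat{\ham}(-\xi)=-\widehat{\ham}(\xi)$, the operator $-\lambda\,\mathrm{Id}+\ham$ is the Fourier multiplier $-(\lambda+\widehat{\ham}(-\xi))$, so it too is invertible on $\ell^2$ with $\|(-\lambda\,\mathrm{Id}+\ham)^{-1}\|_{\ell^2\to\ell^2}\le C_I$. Enlarging $M_1$ so that in addition $|a_M|\le|\lambda|^2/2$, put $\beta_I=(a_M-\lambda^2)^{-1}$ and $\lambda_I=-\lambda\beta_I$; then $|\lambda_I|+|\beta_I|\le C(C_I,\lambda)$, and a direct computation using (A) gives
\begin{equation*}
\big(\lambda\,\mathrm{Id}+\ham\big)\big(\lambda_I\,\mathrm{Id}+\beta_I\,\ham\big)=\beta_I\big(\lambda\,\mathrm{Id}+\ham\big)\big({-\lambda}\,\mathrm{Id}+\ham\big)=\mathrm{Id}+\beta_I G_M .
\end{equation*}
The right-hand side is thus invertible on $\ell^2$, being a product of invertible operators, with $\|(\mathrm{Id}+\beta_I G_M)^{-1}\|_{\ell^2\to\ell^2}\le|\beta_I|^{-1}C_I^{\,2}\le C(C_I,\lambda)$.

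\emph{Conclusion via the classical theory.} Now $\beta_I G_M$ is a discrete Calder\'on--Zygmund convolution operator with norms bounded by $C(C_I,\lambda)$ and $\mathrm{Id}+\beta_I G_M$ is invertible on $\ell^2$; by the inverse-closedness of the Calder\'on--Zygmund operator algebra --- the discrete, convolution counterpart of Calder\'on's theorem quoted above, cf.\ \cite{C},\cite{C1},\cite{CR},\cite{S1} --- its inverse has the form $\mathrm{Id}+\widetilde G_M$ with $\widetilde G_M$ a discrete Calder\'on--Zygmund kernel satisfying $\|\widetilde G_M\|_{\ell^2\to\ell^2}+\|\widetilde G_M\|_{CZ}\le C(C_I,\lambda)$. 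Consequently
\begin{equation*}
\big(\lambda\,\mathrm{Id}+\ham\big)^{-1}=\big(\lambda_I\,\mathrm{Id}+\beta_I\,\ham\big)\big(\mathrm{Id}+\widetilde G_M\big)=\lambda_I\,\mathrm{Id}+\beta_I\,\ham+\lambda_I\widetilde G_M+\beta_I\,\ham\,\widetilde G_M ,
\end{equation*}
and applying (B) to $\ham\,\widetilde G_M$ the last two terms become $c_0\,\mathrm{Id}+c_1\,\ham+K$ with $|c_0|+|c_1|+\|K\|_{\ell^2\to\ell^2}+\|K\|_{CZ}\le C(C_I,\lambda)$. Collecting, $(\lambda\,\mathrm{Id}+\ham)^{-1}=(\lambda_I+c_0)\,\mathrm{Id}+(\beta_I+c_1)\,\ham+K$, which is \eqref{main_decomp} with the bound \eqref{main_est}. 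The sharpness of $\theta>\alpha-1$ plays no role here and is treated separately in Theorem \ref{4_ex}.

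\emph{Main obstacle.} The heart of the matter is (A) (and with it (B)): the convolution kernel of $\ham$ is a sum $\mu=\sum_s\mu_s$ of $\sim\log M$ highly singular measures, and neither $\mu$ nor $\mu*\mu$ is Calder\'on--Zygmund with a bound uniform in $M$ by any soft argument. One must isolate the $\mathrm{Id}$-part explicitly and then bound the H\"ormander norm of $\sum_{s,s'}\mu_s*\mu_{s'}$ independently of the number $\sim(1-\theta)\log_2M$ of scales; the off-diagonal terms require quantitative regularity and decay for $\mu_s*\mu_{s'}$ (and for $\mu_s*k$ in (B)) as the scale separation $|\log(s/s')|$ grows, which is exactly what the estimates of \cite{UZ} for convolutions of the singular measure attached to $[m^\alpha]$ provide. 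The threshold at which this decay overcomes the $\sim(1-\theta)\log_2M$ dyadic scales present in $\ham$ is precisely $\theta=\alpha-1$, which both explains the restriction in the theorem and is the source of its sharpness; one also needs $M\ge M_1$ so that the smallest scale $M^{\theta}$ is large enough for the error terms in those estimates to be summable.
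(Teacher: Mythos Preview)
Your route differs from the paper's in an interesting way and is essentially sound, but the two key black boxes you invoke are not lighter than what the paper builds.

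The paper does not pass through the quadratic identity $(\lambda+\ham)(-\lambda+\ham)=a_M-\lambda^2+G_M$. Instead it sets up the algebra $A_M$ of operators $\lambda\,\mathrm{Id}+\beta\,\ham+\sum_sK_s$ with the dyadic conditions $(i)_s$--$(iv)_s$, proves a mixed-norm submultiplicativity
\[
\|T\tilde T\|_{A_M}\le C\big(\|T\|_{\ell^2\to\ell^2}\|\tilde T\|_{A_M}+\|T\|_{A_M}\|\tilde T\|_{\ell^2\to\ell^2}\big)+\epsilon(M)\|T\|_{A_M}\|\tilde T\|_{A_M}
\]
(their Lemma~\ref{1:10}), and then applies an abstract Christ-type lemma (Theorem~\ref{5}): iterated squaring forces $\|T^{2^N}\|_{A_M}$ small, whence Neumann series invert $\lambda+\ham$ \emph{directly} in $A_M$. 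Your approach instead isolates the special case $T=\tilde T=\ham$ of this submultiplicativity as statement~(A), uses it to manufacture the parametrix $\lambda_I+\beta_I\ham$, and pushes the actual inversion onto $\mathrm{Id}+\beta_IG_M$. This is a clean reduction and yields the pleasant explicit formulas $\beta_I=(a_M-\lambda^2)^{-1}$, $\lambda_I=-\lambda\beta_I$; the paper's argument gives no such formulas.

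Two caveats, though. First, the ``discrete, convolution counterpart of Calder\'on's theorem'' you cite for $(\mathrm{Id}+\beta_IG_M)^{-1}=\mathrm{Id}+\widetilde G_M$ is not a ready-made classical statement for kernels satisfying only the H\"ormander condition $\|\cdot\|_{CZ}$; the references \cite{C},\cite{CR},\cite{S1} treat the specific form $\Omega(x)/|x|^d$. What makes your step go through is that $G_M$ in fact lies in $A_M$ (the dyadic H\"older conditions $(iv)_s$ hold), and inverse-closedness \emph{in $A_M$} is precisely the paper's Theorem~\ref{5}. So you have not bypassed that machinery, only restricted it to the $\beta=0$ sub-algebra. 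Second, your statement~(B) for an arbitrary CZ kernel $K$ is stronger than what the paper's estimates give: Lemmas~\ref{1:7}--\ref{1:9} need the dyadic pieces $K_{s'}$ to satisfy $(iii)_{s'}$--$(iv)_{s'}$, not merely the global H\"ormander bound. In your application this is harmless because $\widetilde G_M\in A_M$ once the inversion is done inside $A_M$, but (B) as written overstates what is available.

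In short: your organization is legitimate and arguably more transparent algebraically, but the technical load --- the regularity of $\hams*\hams$ from Lemma~\ref{war_reg}, the product estimates of Lemmas~\ref{1:6}--\ref{1:10}, and a Christ-type inversion principle --- is the same as in the paper, just invoked at a different point.
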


Applying standard Banach algebras arguments (eg. \cite{GRS}), for each fixed $M$, the kernel of the
operator $(\lambda \,\textup{Id}+\,\ham)^{-1}$ is in $\ell^1((1+|x|)^N)$ for any $N\ge 0$. In
particular  $(\lambda \,\textup{Id}+\,\ham)^{-1}$ is bounded on $\ell^1$, but the weak type $(1,1)$
estimate obtained in this way becomes unbounded when $M\rightarrow \infty$. Also, by selfduality of
the multiplier problem, the uniform in $M$ upper bound for
$\|(\lambda\,\rm{Id}+\ham)^{-1}\|_{\ell^1\to\ell^{1,\infty}}$ requires assumption \eqref{main_op}.

It is worthwhile to put our result in a more general context. First we note that for the
convolution Calder\'on-Zygmund operators in the continuous setting, the invertibility theorems are
by now classical. Similarly, the resolvent of the discrete Hilbert transform, if it exists  as an
operator on $\ell^2(\mathbb{Z})$, is  a  discrete Calder\'on-Zygmund operator. This fact seems to
be folklore and can be proved  by an application of Fourier transform or by the method of \cite{C}.
The discrete analogues of the classical singular integrals have been studied intensively, see some
examples \cite {B}, \cite{BM}, \cite{C2}, \cite{IW}, \cite{LV} \cite{MSW}. We believe, that our
results fit well within this line of research.

\vskip.3cm {\bf Acknowledgement.} We thank  the reviewer for the remarks which significantly
improved the overall presentation of the paper.

\section{Main Theorem} Let us recall, that we have fixed parameters $\alpha,\ \theta$ with $1<\alpha \le 1+\frac{1}{1000}$,
$0<\theta <1$. We introduce a family of algebras, which are subalgebras of the algebra of operators on $\ell^2$.
\begin{definition}\label{1}
We consider the family of operators $T$, which are convolution operators on $\mathbb Z$, with kernels of the form
\begin{equation}\label{2}
T=\lambda\,\textup{Id}+\beta\,\ham+\sum_{\twoline{M^\theta\le s<\infty}{s-\text{dyadic}}} K_s,
\end{equation}
(we  identify convolution operator with its kernel), where the operator $\ham$ is the truncated Hilbert transform:
\begin{equation}\label{hilb_def1}
\ham f(x)=\sum_{\twoline{M^\theta\le s \le M}{s-\text{dyadic}}} \hams f(x)
\end{equation}
with
\begin{equation}\label{hilb_def2}
\hams f(x)=  \sum_{m>0}\varphi_s \Big(\frac{m^\alpha}{s}\Big) \frac{f(x-[m^\alpha])-f(x+[m^\alpha])}{m}
\end{equation}
for some sequence $\varphi_s$ which is uniformly in $C_c^\infty(\frac{1}{2},2)$.
We require that the kernels $K_s$ satisfy:
\begin{enumerate}[$(i)_s$]
\item  $\sum_x K_s(x)=0$,
\item $\textup{supp }K_s\subset[-s,s]$,
\item $\sum_x|K_s(x)|^2\le\frac{D^2_s}{s}$,
\item $\sum_x|K_s(x+h)-K_s(x)|^2\le\frac{D^2_s}{s}\Big(\frac{|h|}{s}\Big)^{\gamma_0}$,
\end{enumerate}
for some small positive $\gamma_0$ depending only on $\delta=\theta-(\alpha-1)$.

For a fixed $M$ we put
\begin{equation*}
\|\{K_s\}\|_{A_M}=\sup_{\twoline{M^\theta\le s<\infty}{s-\text{dyadic}}}D_s,
\end{equation*}
and
\begin{equation}\label{3}
\|T\|_{A_M}=\inf\{|\lambda|+|\beta|+\|\{K_s\}\|_{A_M}\},
\end{equation}
where the infimum is taken over all representations of the operator $T$ in the form \eqref{2}.
\end{definition}

 In fact $A_M$ is a Banach algebra with the norm $C\|T\|_{A_M}$ for certain constant $C$ independent of $M$.
Moreover,
\begin{equation*}
K=\sum_{\twoline{M^\theta\le s<\infty}{s-\text{dyadic}}}K_s
\end{equation*}
is  Calder\'on-Zygmund kernel with constant controlled by  $\|T\|_{A_M}$.

We are now ready to formulate
the two theorems leading immediately to Theorem \ref{main_thm}.

\begin{theorem}\label{4}
Let  $\theta>\alpha-1$.
Assume that for some fixed $\lambda\in {\mathbb C}$ and a constant $C_I$ all operators $\lambda\,\rm{Id}+\ham$ are invertible
for $M\ge M_0$ and $\|(\lambda\,\rm{Id}+\beta\ham)^{-1}\|_{\ell^2\to\ell^2}\le C_I$. Then for $M\ge M_1$
we have $\|(\lambda\,\rm{Id}+\beta\,\ham)^{-1}\|_{A_M}\le C(C_I,\lambda)$.
\end{theorem}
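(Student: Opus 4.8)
\emph{Outline of the argument.}
The plan is to first reduce Theorem \ref{4} to the construction of a right inverse inside the algebra. The embedding $A_M\hookrightarrow\mathcal B(\ell^2)$ is bounded with a constant independent of $M$ (this is contained in the remarks following Definition \ref{1}: a $T$ of the form \eqref{2} has $\ell^2$-norm at most $|\lambda|+C_0|\beta|+C\sup_s D_s$), and under \eqref{main_op} the operator $\lambda\,\textup{Id}+\ham$ is invertible, in particular injective, on $\ell^2$. Hence it suffices to produce \emph{some} $T\in A_M$ with $(\lambda\,\textup{Id}+\ham)\,T=\textup{Id}$ and $\|T\|_{A_M}\le C(C_I,\lambda)$: uniqueness of the $\ell^2$-inverse then forces $T=(\lambda\,\textup{Id}+\ham)^{-1}$, which is the assertion. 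It is convenient to pass to symbols. Writing $m_M(\xi)$ for the symbol of $\ham$ (purely imaginary, vanishing at $\xi=0$, so that $\sigma_{\ell^2}(\ham)$ is a symmetric interval of the imaginary axis), the hypothesis \eqref{main_op} is equivalent to $|\lambda+m_M(\xi)|\ge C_I^{-1}$ for all $\xi$, and the goal becomes to exhibit $(\lambda+m_M)^{-1}$ as the symbol of an element of $A_M$ of controlled norm. When $|\lambda|$ exceeds the (uniform) Banach algebra constant of $A_M$ this is immediate, since the Neumann series $\lambda^{-1}\sum_{n\ge0}(-\ham/\lambda)^n$ converges in $A_M$ and does the job; the whole difficulty is confined to the regime in which $\lambda$ is close to the spectrum of $\ham$, which by \eqref{main_op} only means $\operatorname{dist}(\lambda,\sigma_{\ell^2}(\ham))\gtrsim C_I^{-1}$.

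For such $\lambda$ I would build a parametrix out of functions of $\ham$, using as the main engine the regularity estimates of \cite{UZ} for iterated convolutions of the singular measure carried by $\{\pm[m^\alpha]\}$. Those estimates say, in effect, that each further convolution improves the H\"older regularity of the kernel at the relevant dyadic scales, so that the powers $\ham^{\,n}$ are tame members of $A_M$: each admits a representation $\ham^{\,n}=\lambda_n\,\textup{Id}+\beta_n\,\ham+\sum_{\twoline{M^\theta\le s<\infty}{s-\text{dyadic}}}K^{(n)}_s$ in the sense of \eqref{2}, with the scalars $\lambda_n,\beta_n$ governed by the elementary one-variable functional calculus for $m_M$ along the imaginary segment containing its range, and with the Calder\'on--Zygmund parts $\sup_s D_s(K^{(n)})$ controlled in a manner uniform in $M$ that makes the series for the parametrix summable. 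Organising these expansions, I would split $\ham=\hamt+(\ham-\hamt)$, where $\hamt$ is the smooth model whose multiplier and kernel are genuinely regular and whose resolvent $(\lambda\,\textup{Id}+\hamt)^{-1}$ can be placed in $A_M$ directly by the summation-by-parts and oscillatory-sum techniques of \cite{C1}, \cite{S}, \cite{UZ}; the remaining correction, built from $\ham-\hamt$, is then summed as a suitably renormalised Neumann-type series whose terms are estimated by the convolution bounds of \cite{UZ}. Throughout, one must keep the analysis localised to the scales $M^\theta\le s\le M$: below scale $M^\theta$ the operator is literally $\lambda\,\textup{Id}$, and one checks that nothing finer than a multiple of the identity is produced for the inverse there, which is exactly what the $\lambda_I\,\textup{Id}$ summand in \eqref{main_decomp} absorbs, while the tail $\sum_{s>M}K_s$ generated by the convolutions is shown to satisfy $(i)_s$--$(iv)_s$ with summable constants.

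The main obstacle is precisely the uniform-in-$M$ control of the Calder\'on--Zygmund part $\sup_s D_s$ of the inverse as it passes through this iteration. A single convolution $\ham*\ham$ already couples the sparse scales $[m^\alpha]\sim s$ across the entire range $M^\theta\le s\le M$, and without a genuine smoothing gain the dyadic pieces $K^{(n)}_s$ of the $n$-th power would accumulate a constant growing with $n$, and hence, after summation, with $M$. The hypothesis $\theta>\alpha-1$, that is $\delta=\theta-(\alpha-1)>0$, is exactly what yields a strictly positive H\"older exponent $\gamma_0=\gamma_0(\delta)$ in $(iv)_s$ and the quantitative per-convolution gain in the \cite{UZ} estimates; once $\delta\le0$ the gain is lost and, as Theorem \ref{4_ex} shows, the conclusion genuinely fails. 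The technical heart of the proof is therefore the bookkeeping that tracks how the support restriction, the cancellation $(i)_s$, and the $\ell^2$ and H\"older bounds $(iii)_s$--$(iv)_s$ survive convolution and the passage to the resolvent, with all constants uniform in $M$.
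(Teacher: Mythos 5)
There is a genuine gap. Your outline correctly identifies the reduction to finding a right inverse in $A_M$, disposes of large $|\lambda|$ by the plain Neumann series, and points to the $H_s*H_s$ regularity estimates (the paper's Lemma \ref{war_reg}, built from \cite{UZ}) as the essential technical input. But the route you propose for the hard regime --- $\lambda$ at moderate distance from $\sigma_{\ell^2}(\ham)$ --- does not close, because it rests on the unjustified claim that the Calder\'on--Zygmund constants $\sup_s D_s(K^{(n)})$ of the powers $\ham^n$ are summable in $n$ uniformly in $M$. That is not what happens, and no amount of ``each further convolution improves the H\"older regularity'' rescues it: convolving $\ham$ with itself once gives the decomposition $G_s+E_s+C\delta_0/s$ of Lemma \ref{war_reg}, but repeated convolutions do not in general shrink the $A_M$ norms of $\ham^n$, so a parametrix series cannot be summed term by term in $A_M$ on the strength of these estimates alone.

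The mechanism the paper actually uses is different and essential. It is the \emph{mixed-norm} submultiplicativity \eqref{6}:
\begin{equation*}
\|T_1\,T_2\|_{A_M}\le C_A\big(\|T_1\|_{\ell^2\to\ell^2}\|T_2\|_{A_M}+\|T_1\|_{A_M}\|T_2\|_{\ell^2\to\ell^2}\big)+C_A\epsilon(M)\|T_1\|_{A_M}\|T_2\|_{A_M},
\end{equation*}
which is proved in Lemma \ref{1:10} using Lemma \ref{war_reg} via Lemmas \ref{1:6}--\ref{1:9}. The point of \eqref{6} is that each product in an iterated composition trades an $A_M$ factor for an $\ell^2$ factor; combined with a normalisation that makes $\|I-cT^*T\|_{\ell^2\to\ell^2}\le\delta<1$ (Theorem \ref{5} passes to $T^*T$ and $TT^*$ to get positivity), the induction gives $\|T^{2^N}\|_{A_M}\lesssim(2C_A)^N\delta^{2^N-1}K+\epsilon(M)G_N(K,\delta)$, which is eventually small because the $\ell^2$ contraction $\delta^{2^N-1}$ overwhelms the polynomially growing prefactors. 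This is precisely what makes the Neumann series of $(T^*T)^{-1}$ converge in $A_M$ even though the $A_M$-norms of the individual powers need not decay. Your outline has no substitute for this step.

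Two further points. First, the split $\ham=\hamt+(\ham-\hamt)$ against a ``smooth model'' is not used in the paper, and you give no argument for why the correction series it generates would converge in $A_M$; the same obstruction (lack of decay of $A_M$-norms of iterates) bites there too. Second, the appeal to symbols and functional calculus along an imaginary segment is a distraction: the relevant control is not on the level of one-variable functions of $m_M(\xi)$ but on the level of the kernel decomposition $(i)_s$--$(iv)_s$, which is not a symbol-side object. Where your intuition is right is in the role of $\delta=\theta-(\alpha-1)>0$: it is what produces the positive H\"older exponent $\gamma_0(\delta)$ in $(iv)_s$ via Lemma \ref{war_reg}, and that regularity is exactly what the proof of \eqref{6} consumes. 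But the bridge from that pointwise regularity to the invertibility statement is the Banach-algebra Theorem \ref{5} with the mixed-norm inequality, not a direct parametrix construction.
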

\begin{theorem}\label{4_ex}
Let  $\theta<\alpha-1$. There exists a sequence of functions $\varphi_s$  and a compact set $\Gamma
\subset \mathbb C$
 such that the corresponding Hilbert transform \eqref{hilb_def2} satisfies
$\|(\lambda\,\rm{Id}+\ham)^{-1}\|_{\ell^2\to\ell^2}\le C_I$ for all $M$ and $\lambda \in \Gamma$,
and the estimate $\|(\lambda\,\rm{Id}+\,\ham)^{-1}\|_{\ell^1 \rightarrow \ell^{1,\infty}}\le C $,
does not, for any $C$, hold uniformly in $\lambda \in \Gamma$ and $M$.
\end{theorem}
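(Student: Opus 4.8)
The plan is to write the example down explicitly, dispose of the $\ell^2$ statement by a Neumann‑series argument, and trace the failure of the weak‑type bound to the breakdown of convolution‑regularity for the measure carried by $\{[m^\alpha]\}$ below the threshold $\theta=\alpha-1$.

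\emph{The example and the $\ell^2$ bound.} Fix a non‑negative $\varphi_0\in C_c^\infty(\tfrac12,2)$ with $\sum_{j\in\mathbb Z}\varphi_0(2^{-j}t)\equiv1$ on $(0,\infty)$ and take $\varphi_s=\varepsilon\varphi_0$ for every dyadic $s$, for a small $\varepsilon>0$ to be fixed. Then, up to a correction supported on $O(1)$ dyadic blocks at the two ends of the range of scales (which is uniformly bounded on $\ell^1$ and plays no role below), $\mathbb H_M=-\varepsilon\,\mathbb H^\sharp_M$, where $\mathbb H^\sharp_M f(x)=\sum_{N_0\le m\le N_1}\frac{f(x-[m^\alpha])-f(x+[m^\alpha])}{m}$ with $N_0\asymp M^{\theta/\alpha}$, $N_1\asymp M^{1/\alpha}$. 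By the classical $\ell^2$ theory for rough discrete Hilbert transforms $C_0:=\sup_M\|\mathbb H^\sharp_M\|_{\ell^2\to\ell^2}<\infty$; choose $\varepsilon<(2C_0)^{-1}$ and put $\Gamma=\overline{B(1,\tfrac14)}$. Then $\mathrm{spec}(\mathbb H_M)\subset\{|z|\le\tfrac12\}$, so for every $M$ and $\lambda\in\Gamma$ the operator $\lambda\,\mathrm{Id}+\mathbb H_M$ is invertible with $\|(\lambda\,\mathrm{Id}+\mathbb H_M)^{-1}\|_{\ell^2\to\ell^2}\le(\mathrm{dist}(\Gamma,\{|z|\le\tfrac12\}))^{-1}\le4=:C_I$, which is the first assertion. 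Note that $\Gamma$ is bounded away from $0$, so no coefficient in the expansion below degenerates.

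\emph{Reduction.} Expand $(\lambda\,\mathrm{Id}+\mathbb H_M)^{-1}=\sum_{k\ge0}(-1)^k\lambda^{-k-1}\mathbb H_M^k$, convergent in $\ell^2$ operator norm. If the routine argument recalled in the Introduction already fails to give a bound on $\|\mathbb H_M\|_{\ell^1\to\ell^{1,\infty}}$ uniform in $M$ (for our fixed $\theta<\alpha-1$), the conclusion is immediate from the degree‑one term of the expansion; otherwise $\lambda^{-1}\mathrm{Id}-\lambda^{-2}\mathbb H_M$ is of weak type $(1,1)$ uniformly in $M$, and since the $\ell^{1,\infty}$ quasinorm obeys a quasi‑triangle inequality, it suffices to produce $M\to\infty$, $\lambda\in\Gamma$ (one may keep $\lambda=1$), and $f_M$ with $\|f_M\|_{\ell^1}\le1$ such that $\|(\lambda\,\mathrm{Id}+\mathbb H_M)^{-1}f_M\|_{\ell^{1,\infty}}\to\infty$. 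Writing $(\lambda\,\mathrm{Id}+\mathbb H_M)^{-1}=\lambda^{-1}\mathrm{Id}-\lambda^{-2}\mathbb H_M+\lambda^{-3}\mathbb H_M^{2}(\lambda\,\mathrm{Id}+\mathbb H_M)^{-1}$, the blow‑up is localized in the kernel of $\mathbb H_M^{2}(\lambda\,\mathrm{Id}+\mathbb H_M)^{-1}$; we test on differences $f_M=\tfrac12(\delta_0-\delta_{y_M})$ with $|y_M|\asymp M^\theta$, so that $(\lambda\,\mathrm{Id}+\mathbb H_M)^{-1}f_M=\tfrac12\bigl(G_M-G_M(\cdot-y_M)\bigr)$ probes exactly the difference operation in hypothesis $(iv)_s$ of Definition \ref{1} at the minimal scale, $G_M$ denoting the convolution kernel of the resolvent. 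The remaining Neumann terms have $\ell^{1,\infty}$ quasinorm growing at most geometrically in the degree and are absorbed by taking $\varepsilon$ small.

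\emph{The core estimate and the main obstacle.} On the frequency band $M^{-1}\lesssim|\xi|\lesssim M^{-\theta}$ the resolvent multiplier $1/(\lambda+\widehat{\mathbb H_M}(\xi))$ is, after separating the arithmetic main term of $\widehat{\mathbb H_M}(\xi)$ from its oscillatory part, controlled by the regularity of convolutions $\mu*\mu'$ of the singular measures $\mu,\mu'$ carried by $\{[m^\alpha]\}$ on dyadic blocks — the estimates of \cite{UZ}. For $\theta>\alpha-1$ these produce the H\"older‑type $L^2$ smoothness $(iv)_s$ and hence the Calder\'on--Zygmund conclusion of Theorem \ref{main_thm}; the point here is that for $\theta<\alpha-1$ they fail \emph{quantitatively}, uniformly in $M$, already at the minimal scale $s\asymp M^\theta$. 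Concretely, in that regime the blocks $\{[m^\alpha]\pm[m'^\alpha]:\,m,m'\asymp M^{\theta/\alpha}\}$ cluster so tightly that the corresponding block of $G_M-G_M(\cdot-y_M)$ retains mass of order $1$ on an arithmetically rigid, widely separated pattern, rather than being redistributed into a $|h|/|x|^2$‑type Calder\'on--Zygmund profile; a comb adapted to that pattern, of bounded $\ell^1$ norm, is then mapped to a function whose level set at height $t$ has cardinality $\gtrsim t^{-1}\log(1/t)$ down to $t\asymp M^{-1}$, forcing the $\ell^{1,\infty}$ quasinorm to be $\gtrsim\log M$. The main obstacle is exactly this last step: one needs sharp two‑sided control of the representation counts $\#\{(m,m'):\pm[m^\alpha]\pm[m'^\alpha]=n\}$ (and of the iterates of them appearing in the Neumann tail) valid precisely when $\theta<\alpha-1$, together with a proof that the \emph{signed} coefficients assembled from these counts do not collapse through cancellation, so that an honest, $M$‑dependent lower bound for the weak quasinorm survives. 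Granting that, the remainder of the argument is bookkeeping, and the sharpness claim of Theorem \ref{main_thm} follows.
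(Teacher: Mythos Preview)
Your proposal has a genuine gap, and you have in fact named it yourself: the ``main obstacle'' you defer---sharp two-sided control of the representation counts $\#\{(m,m'):\pm[m^\alpha]\pm[m'^\alpha]=n\}$ together with non-cancellation of the signed coefficients---is not a technical afterthought but the entire content of the proof. Everything else in your write-up is scaffolding. The paper's Section~5 is devoted precisely to establishing this lower bound, and the mechanism is not the one you sketch.

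More importantly, your construction is not the one that makes the arithmetic tractable. You take $\varphi_s=\varepsilon\varphi_0$ uniformly across all dyadic scales $M^\theta\le s\le M$; the paper instead takes $\varphi_s\ne0$ only on two carefully separated blocks $U_-\subset[M^{\alpha-1-1.1\delta},M^{\alpha-1-\delta}]$ and $U_+\subset[M^{1-0.1\kappa},M]$, with $\varphi_s=0$ elsewhere. This two-block structure is essential: it forces $\mathbb H_M=\mathbb H_++\mathbb H_-$, kills $\mathbb H_-*\mathbb H_-$ outright, makes $\|\mathbb H_+*\mathbb H_+\|_{\ell^p}$ small by choosing $\kappa$ small, and---crucially---reduces the lower bound for $\|\mathbb H_+*\mathbb H_-\|_{\ell^p}$ to a \emph{uniqueness} statement for representations $x=[m^\alpha]\pm[n^\alpha]$ with $m\in U_+$, $n\in U_-$. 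That uniqueness is obtained by a short divisor-counting argument (equations~\eqref{non_uni}--\eqref{fin_numb} in the paper), not by any general two-sided estimate on representation counts. With your uniform choice of $\varphi_s$ the square $\mathbb H_M^2$ mixes all pairs of scales, and there is no evident way to isolate a term that blows up without being swamped by the well-behaved diagonal and near-diagonal interactions governed by Lemma~\ref{war_reg}.

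Finally, the paper's route from the $\ell^p$ blow-up of $\mathbb H_M^2$ to failure of the weak-type bound is different from yours and cleaner: rather than testing the resolvent on $\delta_0-\delta_{y_M}$ and tracking Neumann tails, it takes $\Gamma$ to be a contour \emph{enclosing} the (purely imaginary, bounded) spectrum, assumes a uniform weak-type bound, interpolates to get $\|(\lambda+\mathbb H_M)^{-1}\|_{\ell^p\to\ell^p}\le C/(p-1)$, and then recovers $\mathbb H_M^2=\frac{-1}{2\pi i}\oint_\Gamma\lambda^2(\lambda+\mathbb H_M)^{-1}\,d\lambda$ by holomorphic functional calculus, contradicting the lower bound at $p=1+1/\log M$. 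Your $\Gamma=\overline{B(1,\tfrac14)}$ does not enclose the spectrum, so this identity is unavailable to you; your alternative extraction via direct testing is plausible in outline but, as written, is not a proof.
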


 {\bf Remarks:}
\newline
(i) The range of $\alpha$'s considered in Theorem \ref{4} is not optimal, and can be improved using
the methods from \cite{M}, \cite{UZ} or a variant of the argument used in this work to prove Lemma
\ref{war_reg}.
\newline (ii) Theorem \ref{4}
is probably also true with $[m^\alpha]$ replaced by $[m^\alpha\varphi(m)]$, where $\varphi$ is a
function of the Hardy class considered in \cite{M}.
\newline (iii) For  values of $\theta<1$ close to $1$  Theorem \ref{4} could be proved
 using  regularising effect in $\ell^2$ of the kernel $\mathbb H_M$. Known estimates for  the Fourier transform $\hat{\mathbb H}_M$  seem, however,
to be too weak to cover the entire range of $\theta$ considered in this paper.
\newline (iv) In the proof of Lemma \ref{1:8} we could have used a weaker statement of Lemma
\ref{war_reg}, at a cost of a more sophisticated argument.
We believe that Lemma \ref{war_reg} is
of some independent interest, because of its relation to certain type of Waring problem (see \cite{D}, \cite{Se}).
This is one reason we have chosen the variant of proof we present.
\newline (v) Condition \eqref{main_op} is always satisfied for sufficiently large $|\lambda|$. If we only consider real valued $\varphi_s$, more can be said. Since the kernels $\ham$ are anti-symmetric, the Fourier transform $\widehat\ham$ is purely imaginary and also anti-symmetric. Thus \eqref{main_op} is equivalent to $\lambda\notin[-i\,N,i\,N]$, where $N\ge0$. Using the estimates from \cite{D} it can be shown that
\begin{equation*}
N=\limsup_{M\to\infty}\,\sup_{\xi\in\mathbb R}\left|c_\alpha\sum_{\twoline{M^\theta\le s\le M}{s-\text{ dyadic}}}\int_0^\infty\sin(\xi ts^\alpha)\,\varphi_s(t^{1/\alpha})\frac{dt}{t^{1-1/\alpha}}\right|
\end{equation*}
(where $c_\alpha$ is explicitly computable).
\newline (vi) We refer the reader to our subsequent paper \cite{PZ} for a sharper version of Theorem
\ref{4_ex}, see Remark at  the end of Section 5.

Theorem \ref{4} is an immediate consequence of the following result, which exploits the mixed-norm
submultiplicity properties of algebras $A_M$. The idea of using such estimates to solve the problem
of invertibility of singular integral operators first appeared in \cite{C} .

\begin{theorem}\label{5}
Let $A_M$, $M\ge M_0\ge1$ be a family of algebras, consisting of bounded convolution operators on $\ell^2$, with norms $\|\cdot\|_{A_M}$, satisfying
\begin{align}
\|T_1\,T_2\|_{A_M}&\le C_A\big(\|T_1\|_{\ell^2\to\ell^2}\|T_2\|_{A_M}+\|T_1\|_{A_M}\|T_2\|_{\ell^2\to\ell^2}\big)+\label{6}\\
&\qquad\qquad+C_A\epsilon(M)\|T_1\|_{A_M}\|T_2\|_{A_M},\nonumber\\
\|T_1\,T_2\|_{A_M}&\le C_A\|T_1\|_{A_M}\|T_2\|_{A_M},\label{7}
\end{align}
where the constant $C_A$ does not depend on $M$ and $\epsilon(M)\rightarrow 0$ as $M\rightarrow \infty$.
Suppose all operators from the sequence $T^{(M)}$ are  invertible on $\ell^2$ and  satisfy:
\begin{equation}\label{8}
\begin{aligned}
 \|(T^{(M)})^{-1}\|_{\ell^2\to\ell^2}+\|T^{(M)}\|_{A_M}&\le K\qquad\text{$K$ independent of $M\ge M_0$},\\
\|T^{(M)}\|_{\ell^2\to\ell^2}&\le\delta<1.
\end{aligned}
\end{equation}
Then for an $M_1\ge M_0$, sufficiently large and depending only on $K$ and $\delta$, and all $M\ge M_1$,
$T^{(M)}$ are invertible in $A_M$, with
\begin{equation*}
\big\|(T^{(M)})^{-1}\big\|_{A_M}\le C=C(K,\delta),
\end{equation*}
with $C(K,\delta)$ independent of $M\ge M_1$.
\end{theorem}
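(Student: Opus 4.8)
Write $T=T^{(M)}$, $S=T^{-1}$, and $\|\cdot\|_2=\|\cdot\|_{\ell^2\to\ell^2}$. Recall that $\mathrm{Id}\in A_M$ with $\|\mathrm{Id}\|_{A_M}\le1$, that the $A_M$-norm controls $\|\cdot\|_2$, and that, $A_M$ being an algebra, $T$, $T^{*}$ and all polynomials in them lie in $A_M$. The plan is to build $S$ by a Newton iteration started from the adjoint $T^{*}$, and then to keep its $A_M$-norm under control by repeatedly invoking the mixed bound \eqref{6}, whose point is that a product is small in $A_M$ as soon as one factor is small in $\|\cdot\|_2$. For the initialization, note that $TT^{*}$ is positive with $\sigma(TT^{*})\subset[\|T^{-1}\|_2^{-2},\|T\|_2^{2}]\subset[K^{-2},\delta^2]$, so with $c=2/(K^{-2}+\delta^2)$ and $S_0=c\,T^{*}$ one has $r_0:=\mathrm{Id}-TS_0=\mathrm{Id}-c\,TT^{*}$ and $\|r_0\|_2\le\rho_0:=(\delta^2K^2-1)/(\delta^2K^2+1)<1$. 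Since $A_M$ is closed under taking adjoints with $\|T^{*}\|_{A_M}=\|T\|_{A_M}\le K$ (here one uses $\ham^{*}=-\ham$ and the symmetry of the defining conditions $(i)_s$--$(iv)_s$ under $x\mapsto-x$), we get $S_0\in A_M$ with $\|S_0\|_{A_M}\le 2K^{3}$ and $\|r_0\|_{A_M}\le B_0$, where $B_0=B_0(C_A,K,\delta)$.

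Then I would iterate $S_{n+1}=S_n(2\,\mathrm{Id}-TS_n)$ and record the algebraic identities $r_{n+1}=r_n^{2}$, $S_{n+1}-S_n=S_nr_n$ and $S-S_n=S\,r_n$, where $r_n:=\mathrm{Id}-TS_n$. Thus $r_n=r_0^{2^{n}}$ and $\|r_n\|_2\le\rho_0^{2^{n}}$; moreover $\|S-S_n\|_2\le K\rho_0^{2^{n}}\to0$, so $S_n\to S$ in the $\ell^2$ operator norm, and $\|S_n\|_2\le 2K$ for all $n\ge0$.

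The technical heart is the claim that, for $M$ large enough, $\sup_n\|r_n\|_{A_M}\le B(C_A,K,\delta)$ and in fact $\|r_n\|_{A_M}\to0$ geometrically. Applying \eqref{6} to $r_{n+1}=r_n\cdot r_n$ yields
\[
\|r_{n+1}\|_{A_M}\le 2C_A\|r_n\|_2\,\|r_n\|_{A_M}+C_A\,\epsilon(M)\,\|r_n\|_{A_M}^{2}.
\]
The gain factor $C_A\|r_n\|_2\le C_A\rho_0^{2^{n}}$ need not be less than $1$ to begin with (the algebra constant $C_A$ is not small), but $\rho_0^{2^{n}}$ decays doubly exponentially, so after $n_0=n_0(C_A,K,\delta)$ squarings --- the smallest $n_0$ with $C_A\rho_0^{2^{n_0}}\le\tfrac14$ --- it drops below $\tfrac14$. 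During the transient $n\le n_0$ it suffices to know that $\|r_n\|_{A_M}$ stays bounded by the $M$-independent constant $B_1:=(3C_A)^{n_0}B_0$, which follows by induction from the displayed recursion once $\epsilon(M)$ is below a threshold depending only on $C_A,K,\delta$; for $n\ge n_0$ the recursion becomes $\|r_{n+1}\|_{A_M}\le(\tfrac12+2C_A\epsilon(M)B_1)\|r_n\|_{A_M}$, so (shrinking $\epsilon(M)$ once more) $\|r_n\|_{A_M}\le 2B_1$ for every $n$ and $\|r_n\|_{A_M}\le(\tfrac34)^{\,n-n_0}B_1$ for $n\ge n_0$. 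In particular $\sum_n\|r_n\|_{A_M}<\infty$, with a bound uniform over all sufficiently large $M$.

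Finally I would bound $\sigma_n:=\|S_n\|_{A_M}$. Applying \eqref{6} to $S_{n+1}-S_n=S_nr_n$ and using $\|S_n\|_2\le 2K$,
\[
\sigma_{n+1}\le\sigma_n\bigl(1+C_A\|r_n\|_2+C_A\epsilon(M)\|r_n\|_{A_M}\bigr)+2C_AK\,\|r_n\|_{A_M};
\]
since both $\sum_n\bigl(C_A\|r_n\|_2+C_A\epsilon(M)\|r_n\|_{A_M}\bigr)$ and $\sum_n 2C_AK\|r_n\|_{A_M}$ are finite and bounded uniformly over all sufficiently large $M$ (by the previous step, together with $\sum_n\rho_0^{2^{n}}<\infty$), a discrete Gr\"onwall argument gives $\sup_n\sigma_n\le C(C_A,K,\delta)$. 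Re-inserting this into the estimate for $\|S_{n+1}-S_n\|_{A_M}=\|S_nr_n\|_{A_M}$ shows the increments are summable, so $\{S_n\}$ is Cauchy in $A_M$; its limit lies in $A_M$ with $A_M$-norm $\le C(C_A,K,\delta)$, and since the $A_M$-norm controls $\|\cdot\|_2$ this limit also equals the $\ell^2$-limit $S=(T^{(M)})^{-1}$. Taking $M_1$ to be the largest of the finitely many thresholds on $\epsilon(M)$ collected above finishes the proof. I expect the third step to be the genuine obstacle: it is where the precise shape of \eqref{6} and the assumption $\epsilon(M)\to0$ are indispensable, and one must be careful that the length $n_0$ of the transient, during which the $A_M$-norms of the residuals are permitted to grow, depends only on $C_A,K,\delta$ --- so that the resulting lower bound for $M_1$ depends on nothing but $K$ and $\delta$.
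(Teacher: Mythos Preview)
Your proof is correct, and its engine is the same as the paper's: repeated squaring together with the mixed bound \eqref{6}, so that the doubly--exponentially decaying $\ell^2$ factor eventually beats the algebra constant $C_A$, while the $\epsilon(M)$ contribution is absorbed by taking $M$ large. The paper organises this differently. It applies the squaring directly to $T$ (using $\|T\|_{\ell^2\to\ell^2}\le\delta$) to obtain $\|T^{2^{N_0}}\|_{A_M}\le\tfrac{1}{2C_A}$; then, for \emph{positive} $T$, it sums the Neumann series $\sum_n(I-T)^n$ in $A_M$ (here $\|I-T\|_{\ell^2\to\ell^2}<1$ comes from the lower spectral bound $T\ge K^{-1}$); finally it reduces the general case to the positive one via $T^*T$ and $TT^*$. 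Your Newton scheme fuses these three stages: starting from $S_0=cT^*$ makes $r_0=I-cTT^*$ already $\ell^2$-small by exactly that spectral argument, you square $r_0$ rather than $T$, and build $T^{-1}$ in one pass. The cost is the extra Gr\"onwall bookkeeping for $\|S_n\|_{A_M}$, which the paper avoids since once $\|T^{2^{N_0}}\|_{A_M}<\tfrac{1}{2C_A}$ it is in standard Banach-algebra territory; the gain is that you bypass the two-step positive/general reduction. Both arguments tacitly use that $A_M$ is $*$-closed and complete; your explicit appeal to ``$\|\cdot\|_{A_M}$ controls $\|\cdot\|_{\ell^2\to\ell^2}$'' to identify the $A_M$-limit of $S_n$ with the $\ell^2$-limit $T^{-1}$ is the one hypothesis not literally supplied by the abstract statement, but it holds for the concrete algebra of Definition~\ref{1} and an equivalent identification is implicit in the paper's Neumann-series step as well.
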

\begin{proof}[Proof]
We will drop the superscript $M$ and denote $T^{(M)}$ by $T$.
We first prove that
there exist constants $C,N_0$ and $\delta_1<1$, depending only on $K,\delta,C_A$, such that
\begin{equation}\label{9}
\big\|T^n\big\|_{A_M}\le C\,\delta_1^n,\quad n\ge N_0.
\end{equation}
A simple inductive argument shows an estimate
\begin{align*}
\big\|T^{2^N}\big\|_{A_M}&\le 2^N\,C_A^N\big\|T^{2^{N-1}}\big\|_{\ell^2\to\ell^2}\dots\|T\|_{\ell^2\to\ell^2}\|T\|_{A_M}+\\
&\qquad+\epsilon \,G_N\big(\|T\|_{A_M},\|T\|_{\ell^2\to\ell^2}\big),
\end{align*}
where $G_N$ is a polynomial of degree $\le2^N$, with non-negative coefficients. Suppose an operator $T$ satisfies \eqref{8}. Then, clearly
\begin{equation*}
\big\|T^{2^N}\big\|_{A_M}\le (2\,C_A)^N\,\delta^{2^N-1}\,K+\epsilon\,G_N(K,\delta).
\end{equation*}
Choose $N_0$ such, that
\begin{equation*}
(2\,C_A)^{N_0}\,\delta^{N_0}\,K\le\frac{1}{4\,C_A},
\end{equation*}
and $M_1\ge M_0$ so that also
\begin{equation*}
\epsilon(M)\,G_{N_0}(K,\delta)\le\frac{1}{4\,C_A},\qquad M\ge M_1.
\end{equation*}
We get
\begin{equation*}
\big\|T^{2^{N_0}}\big\|_{A_M}\le\frac{1}{2\,C_A},\qquad M\ge M_1.
\end{equation*}
By \eqref{7} and  a standard Banach algebras considerations we get
\begin{equation}\label{banach_1}
\big\|T^n\big\|_{A_M}\le\Big(\frac{1}{2}\Big)^{\frac{n}{2^{N_0}}}\,\cdot\,C_{C_A,K,\delta}.
\end{equation}

Suppose that the positive  invertible on $\ell^2$ operator $T$ satisfies \eqref{8}. Then
$\delta\le I-T\le 1-K^{-1}$ so $I-T$ satisfies \eqref{8}. Applying \eqref{banach_1}
to the Neumann series representation of $T^{-1}$
we get an estimate
$\|T^{-1}\|_{A_M}\le C_{K,\delta,C_{A}}$.

Now, if $T$ is an arbitrary operator, invertible on $\ell^2$ and satisfying \eqref{8}, we apply the
above conclusion to $T^*\,T$ and $T\,T^*$ and the proof of Theorem \ref{5} is concluded.
\end{proof}
The fact that the algebra norms $\|\cdot\|_{A_{M}}$ satisfy the hypotheses \eqref{6} and \eqref{7}
will follow from a series of  lemmas, which are gathered in the next section.

\section{Lemmas}
In this section we fix $\theta=\alpha-1+\delta$, $\delta>0$.
Let $\varphi\in C_c^\infty(\frac12 ,2)$, and, for convenience let us introduce an operator $H_s$:
\begin{equation}\label{H_s_def}
H_sf(x)=\mathcal H_{s^{\alpha}}f(x)=\sum_{m>0}\varphi\Big(\frac {m}{s}\Big)
\frac{f(x-[m^\alpha])-f(x+[m^\alpha])}{m},
\end{equation}
where $\hams$ corresponds to the functions $\tilde\varphi_s(t)=\varphi(t^{1/\alpha})$. Let us denote by $H_s(x)$ the kernel of this operator.
\begin{lemma}{\label{war_reg}}
Fix $1<\alpha<1+\frac{1}{1000}$ and $\delta_L>0$. Then there exist functions $G_s(x)$, $E_s(x)$ and
an exponent $\gamma(\delta_L)$ independent of $s$, such that
\begin{equation}\label{conv_dec}
 H_s*H_s(x)= G_s(x)+E_s(x)+\frac{C}{s}\,\delta_0(x)
\end{equation}
where
\begin{equation}\label{G_sup}
 |G_s(x)|+|E_s(x)|\le Cs^{-\alpha},\qquad\rm{supp }\ E_s\subset [-s^{\alpha-1+\delta_L},s^{\alpha-1+\delta_L}]
\end{equation}
and
\begin{equation}\label{G_reg}
|G_s(x+u)-G_s(x)|\le  Cs^{-\alpha} |u\,s^{-\alpha}|^{-\gamma(\delta_L)}
\end{equation}
where the constants $C$ depends only on $\varphi$.
\end{lemma}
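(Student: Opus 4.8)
\emph{Plan of proof.} I start from a fully explicit expansion of the self-convolution. Substituting \eqref{H_s_def},
\[
H_s*H_s(x)=\sum_{m,n>0}\frac{\varphi(m/s)\,\varphi(n/s)}{mn}\,\bigl(\delta_{[m^\alpha]}-\delta_{-[m^\alpha]}\bigr)*\bigl(\delta_{[n^\alpha]}-\delta_{-[n^\alpha]}\bigr)(x),
\]
so each pair $(m,n)$, with $m,n\sim s$, contributes the four point masses at $\pm[m^\alpha]\pm[n^\alpha]$ with coefficients $\pm1/(mn)$. Since $\alpha>1$, for $s$ larger than a constant depending only on $\varphi$ one has $[(m+1)^\alpha]-[m^\alpha]\ge\alpha(s/2)^{\alpha-1}>1$, hence $m\mapsto[m^\alpha]$ is strictly increasing on $\{m\sim s\}$ and $[m^\alpha]=[n^\alpha]$ forces $m=n$. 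Consequently the only mass at $x=0$ is the diagonal one, $-2\sum_m\varphi(m/s)^2m^{-2}\,\delta_0$, which by Euler--Maclaurin equals $\frac{C}{s}\delta_0$ (with $C=-2\int\varphi(u)^2u^{-2}\,du$) up to an $O(s^{-2})\delta_0$ term that, being supported at $0$ and of size $\le s^{-\alpha}$, is put into $E_s$. All remaining masses are split by the size of the exponent: the ``sum'' masses at $\pm([m^\alpha]+[n^\alpha])$, the residual diagonal masses at $\pm2[m^\alpha]$, and the ``difference'' masses at $\pm([m^\alpha]-[n^\alpha])$ with $|m-n|>h_0$ go into $G_s$ (all living at $|x|$ of order $s^\alpha$, or between $s^{\alpha-1}$ and $s^\alpha$); the near-diagonal difference masses, $0<|m-n|\le h_0$, form $E_s$. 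Taking $h_0$ a suitable constant makes $|[m^\alpha]-[n^\alpha]|\le\alpha(2s)^{\alpha-1}|m-n|+1\le s^{\alpha-1+\delta_L}$ for $s$ large, which is the support claim in \eqref{G_sup}; alternatively one may take $h_0\sim s^{\delta_L}$ and absorb the resulting logarithm by a dyadic decomposition in $h$.

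For the bound $|E_s(x)|\le Cs^{-\alpha}$, fix $x$ and write $m=n+h$: for each fixed $h$ with $1\le|h|\le h_0$ the function $n\mapsto(n+h)^\alpha-n^\alpha$ is increasing with derivative $\alpha\bigl((n+h)^{\alpha-1}-n^{\alpha-1}\bigr)\sim(\alpha-1)h\,s^{\alpha-2}$, so it has total variation $\sim(\alpha-1)h\,s^{\alpha-1}$ over $\{n\sim s\}$ and (accounting for the $\pm1$ from the floors) attains any given value on $O(s^{2-\alpha}/h)$ integers $n$. Summing over $1\le|h|\le h_0$ and weighting each pair by $1/(mn)\sim s^{-2}$ gives $|E_s(x)|\le Cs^{-\alpha}$. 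This is the elementary end of the ``Waring difference'' counting for $[m^\alpha]$; cf.\ \cite{D}, \cite{Se}.

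The real content is $G_s$: the sup-bound in \eqref{G_sup} and the regularity \eqref{G_reg}. Up to the pieces already removed, $G_s(x)$ is a weighted count of the solutions of $[m^\alpha]\pm[n^\alpha]=x$ with $m,n\sim s$, and $G_s(x+u)-G_s(x)$ is the corresponding difference of two such counts; these are Waring-type counting problems for the Piatetski--Shapiro sequence $[m^\alpha]$, which I would attack by the circle method. Passing to the Fourier side, $G_s$ (minus the removed terms) equals $\int_{\mathbb T}\widehat{H_s}(\xi)^2e(x\xi)\,d\xi$ with $\widehat{H_s}(\xi)=-2i\sum_m\tfrac{\varphi(m/s)}{m}\sin(2\pi[m^\alpha]\xi)$ and $e(t)=e^{2\pi it}$, and $G_s(x+u)-G_s(x)$ inserts the factor $e(u\xi)-1$. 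On the major region $|\xi|\le s^{-\alpha}$, $\sin(2\pi[m^\alpha]\xi)$ is non-oscillatory and $|\widehat{H_s}(\xi)|\sim|\xi|s^\alpha$ (using $\widehat{H_s}(0)=0$), so that $\int_{|\xi|\le s^{-\alpha}}|\widehat{H_s}|^2\le Cs^{-\alpha}$, producing a main term of size $O(s^{-\alpha})$, for which the factor $e(u\xi)-1$, being $\le\min(2,2\pi|u|s^{-\alpha})$ there, yields exactly the Hölder gain of \eqref{G_reg}. On the remaining region one needs van der Corput / exponent-pair estimates for the phases $[m^\alpha]\xi$, and, for \eqref{G_reg}, their analogues carrying the extra oscillatory factor — precisely the ``regularity estimates for convolutions of the singular measure attached to $[m^\alpha]$'' of \cite{UZ} (see also \cite{M}) — to obtain a gain over $s^{-\alpha}$ with a power of $|u\,s^{-\alpha}|$ to spare. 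I expect this to be the main obstacle: it is where the hypothesis $\alpha<1+\tfrac{1}{1000}$ is spent and where the exponent $\gamma(\delta_L)$ is produced. Once these exponential-sum estimates are in hand, the rest — collecting the three groups of masses, using the anti-symmetry of $H_s$ to take $G_s$ and $E_s$ even, and reading off \eqref{conv_dec}, \eqref{G_sup}, \eqref{G_reg} — is routine bookkeeping.
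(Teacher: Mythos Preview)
Your plan handles the easy parts correctly (isolating the $\delta_0$ mass, bounding $E_s$ by the monotone counting argument), but both the decomposition and the route to \eqref{G_reg} differ from the paper's, and the hard step is left open.

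\textbf{Decomposition.} The paper does not split by the size of $|m-n|$. It takes $G_M=H_M*H_M$ on $|x|\ge M^{\alpha-1+\delta_L}$, extends it to be \emph{constant} on the remaining small interval, and then defines $E_M$ by equation \eqref{conv_dec}. Thus $G_M$ and $E_M$ are separated by the value of $x$, not by which pairs $(m,n)$ contribute. This matters: your $G_s$ still carries genuine point masses on $|x|\lesssim s^{\alpha-1+\delta_L}$ (the far--difference pairs with $|m-n|=h_0+1,h_0+2,\dots$ land there), so you would have to prove H\"older regularity for that piece separately; the paper sidesteps this by the constant extension.

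\textbf{Method for \eqref{G_reg}.} Your Fourier/circle--method sketch is not what the paper does, and the part you flag as ``the main obstacle'' is exactly where the paper's work lies. The paper cites \cite{UZ} only for the sup bound \eqref{G_sup} (all $x\neq0$) and for \eqref{G_reg} in the outer range $|x|,|x+u|\ge M^{99/100}$. For the remaining middle range $M^{\alpha-1+\delta_L}\le|x|\le M^{99/100}$ it proves, by a direct physical--space count, the asymptotic
\[
M^{2}\,H_M*H_M(x)=c_\alpha M^{2-\alpha}\bigl(1+O(M^{-\delta_0})\bigr)+O\bigl(M^{2-\alpha-\delta_0/4}\bigr),
\]
from which \eqref{G_reg} (with $\gamma(\delta_L)$ proportional to $\delta_L$) follows at once. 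The ingredients are: (a)~parametrize solutions of $[m_1^\alpha]-[m_2^\alpha]=x$ by $h=m_1-m_2$ and by which subinterval $I_r\subset[0,1)$ of length $M^{-\delta_0}$ contains $\{m_1^\alpha-m_2^\alpha\}$; (b)~reduce the count over each $(h,I_r)$ to a short sum $\sum_{m\in\mathcal J}\psi(m^\alpha)$ with trigonometric--polynomial majorants/minorants $\psi$; (c)~after one van der Corput differencing, control $\sum_{m\in\mathcal J}e^{2\pi i k\alpha m^{\alpha-1}}$ whenever $\|k\alpha m^{\alpha-1}\|\ge M^{-\delta_0/2}$ holds throughout $\mathcal J$; and---this is the decisive idea, Lemma~\ref{2:11}---(d)~show that the exceptional set of $h$'s for which that condition fails is small, by observing that failure forces $\|kx/h\|$ to be tiny, whence a fixed nonzero integer $kx-z$ would acquire $\gtrsim M^{\delta_0/4}$ divisors, contradicting the divisor bound $d(n)=n^{o(1)}$.

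So your outline is not wrong, but it leaves precisely the step that carries the lemma unaddressed, and the mechanism the paper uses (the divisor--bound argument) is absent from your sketch. A Fourier--side proof might be possible, but \cite{UZ} as cited does not cover the middle range, and you would still need an input of comparable arithmetic strength there.
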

This lemma is the main technical tool we use. We postpone its proof to the next section. In this
section we will apply this lemma to $\hams$, that is with $s$ replaced by $s^{\frac1\alpha}$.

\begin{lemma}\label{1:3}
 Let $\psi\in\text{\rm C}_c^\infty(\mathbb R)$, $\psi\equiv1$ for $|x|\le1$, $\psi\equiv0$ for $|x|\ge2$. For a given convolution kernel $K$ on $\mathbb Z$ we define truncated kernels:
\begin{equation*}
K_R(x)=K(x)\cdot\psi\Big(\frac{x}{R}\Big),
\end{equation*}
Then for $R\ge1$ we have
\begin{equation*}
\|K_R\|_{\ell^2\to\ell^2}\le C\,\|K\|_{\ell^2\to\ell^2},
\end{equation*}
where the constant $C$ is independent of $R$.
\begin{proof}
This is immediate by Fourier transform.
\end{proof}
\end{lemma}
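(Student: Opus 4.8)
The statement is that smooth truncation of a convolution kernel does not increase its $\ell^2\to\ell^2$ operator norm beyond a fixed constant factor. The natural strategy is to pass to the Fourier side, where convolution becomes multiplication and the $\ell^2\to\ell^2$ norm is the sup of the symbol. First I would record that for a convolution operator on $\ell^2(\mathbb Z)$ with kernel $K$, Plancherel gives $\|K\|_{\ell^2\to\ell^2}=\|\widehat K\|_{L^\infty(\mathbb T)}$, where $\widehat K(\xi)=\sum_x K(x)e^{-2\pi i x\xi}$ is the (periodic) symbol. The truncated kernel $K_R(x)=K(x)\psi(x/R)$ is, on the Fourier side, the convolution $\widehat{K_R}=\widehat K * \widehat{\psi_R}$ on $\mathbb T$, where $\widehat{\psi_R}$ denotes the symbol of the sequence $x\mapsto\psi(x/R)$.

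\textbf{Key steps.} The plan is: (1) identify $\widehat{\psi_R}$ and show it behaves like an $L^1$-normalized approximate identity on $\mathbb T$, uniformly in $R\ge 1$; (2) conclude $\|\widehat{K_R}\|_\infty\le\|\widehat{\psi_R}\|_{L^1(\mathbb T)}\,\|\widehat K\|_\infty$ by Young's inequality. For step (1), write $\psi_R(x)=\psi(x/R)$; by Poisson summation (or directly), its symbol is $\widehat{\psi_R}(\xi)=\sum_{k\in\mathbb Z}R\,\widehat\psi_{\mathbb R}(R(\xi+k))$, where $\widehat\psi_{\mathbb R}$ is the Euclidean Fourier transform of $\psi\in C_c^\infty(\mathbb R)$. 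Since $\psi$ is Schwartz, $\widehat\psi_{\mathbb R}$ decays rapidly, so $\int_{\mathbb T}|\widehat{\psi_R}(\xi)|\,d\xi\le\sum_k\int_{\mathbb R}R|\widehat\psi_{\mathbb R}(R\eta)|\,d\eta=\|\widehat\psi_{\mathbb R}\|_{L^1(\mathbb R)}=:C$, a constant independent of $R$. (One must check the tails sum: the rapid decay of $\widehat\psi_{\mathbb R}$ makes $\sum_k R|\widehat\psi_{\mathbb R}(R(\xi+k))|$ summable with $L^1(\mathbb T)$ norm bounded by $\|\widehat\psi_{\mathbb R}\|_{L^1(\mathbb R)}$ for all $R\ge 1$.) Then step (2) is the standard estimate $\|f*g\|_{L^\infty(\mathbb T)}\le\|f\|_{L^1(\mathbb T)}\|g\|_{L^\infty(\mathbb T)}$ applied with $f=\widehat{\psi_R}$, $g=\widehat K$, giving $\|K_R\|_{\ell^2\to\ell^2}=\|\widehat{K_R}\|_\infty\le C\|\widehat K\|_\infty=C\|K\|_{\ell^2\to\ell^2}$.

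\textbf{Main obstacle.} There is essentially no deep obstacle; the content is making the "approximate identity" claim precise on the circle uniformly in $R$. The one point requiring a little care is the interchange of summations/integrals needed to get $\|\widehat{\psi_R}\|_{L^1(\mathbb T)}\le\|\widehat\psi_{\mathbb R}\|_{L^1(\mathbb R)}$ uniformly: this is justified by the Schwartz decay of $\widehat\psi_{\mathbb R}$ (so the series defining $\widehat{\psi_R}$ converges absolutely and uniformly, and the triangle inequality followed by unfolding $\mathbb T$ to $\mathbb R$ is legitimate). An alternative, avoiding Poisson summation entirely, is to note directly that $\widehat{\psi_R}$ is a finite-in-$L^1$ trigonometric object: since $\psi_R\in\ell^1(\mathbb Z)$ with $\|\psi_R\|_{\ell^1}\sim R$ one does \emph{not} get a uniform bound that way, so the Fourier-transform-of-$\psi$ route is the efficient one, and this is presumably what "immediate by Fourier transform" refers to.
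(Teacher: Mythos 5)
Your proof is correct and is exactly the argument the paper compresses into ``immediate by Fourier transform'': on the symbol side the truncation acts by convolution with the periodization $\sum_k R\,\widehat\psi_{\mathbb R}(R(\xi+k))$, whose $L^1(\mathbb T)$ norm is bounded by $\|\widehat\psi_{\mathbb R}\|_{L^1(\mathbb R)}$ uniformly in $R\ge1$, and Young's inequality finishes. No discrepancy with the paper's (one-line) proof.
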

\begin{lemma}\label{1:5}
For an operator $T$ as in \eqref{2}, we have
\begin{equation*}
|\lambda|\le\|T\|_{\ell^2\to\ell^2}+\epsilon(M)\|T\|_{A_M}.
\end{equation*}
\end{lemma}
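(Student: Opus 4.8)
The plan is to recover the scalar $\lambda$ from the value at the origin of the convolution kernel of $T$, exploiting that every $K_s$ in the remainder part is concentrated at a scale $s\ge M^\theta$ and hence is tiny at $0$.

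First I would note that, since $T\delta_0$ is the kernel of $T$, its value $T(0)$ at the origin equals $\langle T\delta_0,\delta_0\rangle$, so that $|T(0)|\le\|T\|_{\ell^2\to\ell^2}$. Next, for any representation \eqref{2} of $T$ I evaluate the three kernels at $0$ separately. The identity contributes $\lambda$; the truncated Hilbert transform contributes $\beta\,\ham(0)=0$, because by \eqref{hilb_def1}--\eqref{hilb_def2} the kernel of $\ham$ is supported on the points $\pm[m^\alpha]$ with $m\ge1$, none of which is $0$; and, by $(iii)_s$,
\[
|K_s(0)|\le\Big(\sum_x|K_s(x)|^2\Big)^{1/2}\le\frac{D_s}{\sqrt s}\le\frac{\|\{K_s\}\|_{A_M}}{\sqrt s},
\]
so summing the geometric series over dyadic $s\ge M^\theta$ gives $\sum_s|K_s(0)|\le C\,M^{-\theta/2}\,\|\{K_s\}\|_{A_M}$. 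In particular this series converges absolutely and $\lambda=T(0)-\sum_s K_s(0)$.

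Combining the two bounds, for every representation \eqref{2} of $T$ one gets
\[
|\lambda|\le\|T\|_{\ell^2\to\ell^2}+C\,M^{-\theta/2}\,\|\{K_s\}\|_{A_M}\le\|T\|_{\ell^2\to\ell^2}+C\,M^{-\theta/2}\big(|\lambda|+|\beta|+\|\{K_s\}\|_{A_M}\big),
\]
and taking the infimum over such representations (equivalently, working with one that nearly realizes \eqref{3}) yields the claim with $\epsilon(M)=C\,M^{-\theta/2}$, which tends to $0$.

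There is no serious obstacle here; the lemma is short. The two points that need a little care are the observation $\ham(0)=0$, which is exactly what lets $\lambda$ be isolated, and a small amount of bookkeeping around the infimum in \eqref{3}, since $\lambda$ alone is not an invariant of $T$. The argument is robust: any bound of the form $\|K_s\|_{\ell^\infty}\le C\,\|\{K_s\}\|_{A_M}\,s^{-c}$ with $c>0$ would suffice, and the gain $M^{-\theta/2}$ is far more than what the use of this estimate in verifying \eqref{6} requires.
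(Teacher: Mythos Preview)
Your proof is correct and follows essentially the same argument as the paper: evaluate the kernel at the origin, use $\langle\ham\delta_0,\delta_0\rangle=0$, bound $|K_s(0)|$ via $(iii)_s$, and sum the resulting geometric series over dyadic $s\ge M^\theta$ to obtain $\epsilon(M)\le C\,M^{-\theta/2}$. The only addition is your remark about the infimum in \eqref{3}; the paper simply writes $\|T\|_{A_M}$ in place of $\|\{K_s\}\|_{A_M}$ without comment, but your version with the bound $C\,M^{-\theta/2}\|\{K_s\}\|_{A_M}$ for the given representation is in fact the cleanest way to state what is actually proved (and is all that is needed in the applications).
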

\begin{proof}
It suffices to observe, that
\begin{equation*}
<\ham\,\delta_0,\delta_0>=0,
\end{equation*}
and by $(iii)$ of definition \ref{1}
\begin{equation*}
|K_s(0)|^2\le\frac{\|T\|^2_{A_M}}{s}.
\end{equation*}
Then, for  $\epsilon(M)\le C\,M^{-\theta/2}$  the conclusion follows from
\begin{equation*}
\lambda=<T\,\delta_0,\delta_0>-\sum_{\twoline{M^\theta\le s<\infty}{s-\text{dyadic}}}K_s(0).
\end{equation*}
\end{proof}

\begin{lemma}\label{1:4}
Let $T$ be the kernel of the form  \eqref{2}. Then $T$ admits a representation
\begin{equation*}
\lambda\,\textup{Id}+\beta\,\sum_{\twoline{M^\theta\le s\le M}{s-\text{dyadic}}}\hams+\sum_{\twoline{M^\theta\le s<\infty}{s-\text{dyadic}}}K'_s,
\end{equation*}
where:
\begin{equation*}
\hams(x)=\Big(\psi\big(\frac{x}{2s}\big)-\psi\big(\frac{x}{2s}  \big)\Big)\,\ham(x),\qquad s\ge M^\theta,\text{ dyadic},
\end{equation*}
the function $\psi$ is the same smooth cutoff function as in the previous lemma, the kernels $K'_s$ satisfy  conditions
${ (i)_{s}...(iv)_{s}}$
from Definition \ref{1}, and we have:
\begin{equation*}
|\lambda|+|\beta|+\|\{K'_s\}\|_{A_M}\le C\,\|T\|_{A_M},
\end{equation*}
Moreover
\begin{equation*}
\Big\|\lambda\,\textup{Id}+\sum_{\twoline{M^\theta\le s<s_0}{s-\text{dyadic}}}(\beta\,\hams+K'_s)\Big\|_{\ell^2\to\ell^2}
\le C\|T\|_{\ell^2\rightarrow\ell^2}+\epsilon(M)\|T\|_{A_M}.
\end{equation*}
\end{lemma}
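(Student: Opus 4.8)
The plan is to pass from the original truncated Hilbert transform $\ham$ to the ``windowed'' pieces $\hams$ by decomposing each kernel $\ham(x)$ according to dyadic scales $s$, and to absorb the resulting error terms into new Calder\'on--Zygmund pieces $K'_s$ satisfying $(i)_s$--$(iv)_s$. First I would write, for each dyadic $s$ with $M^\theta\le s\le M$, a telescoping smooth partition $\sum_{s} \big(\psi(x/2s)-\psi(x/s)\big)=1$ away from the origin, so that $\ham(x)=\sum_s \big(\psi(x/2s)-\psi(x/s)\big)\ham(x)$ up to a term supported near $0$. The point is that the individual summand $\hams$ of $\ham$ is essentially supported on $|x|\sim s$ (since $\varphi_s$ lives in $(\tfrac12,2)$ and $m^\alpha\le 2s$ forces $[m^\alpha]\lesssim s$), so the windowed pieces $\hams(x)=\big(\psi(x/2s)-\psi(x/s)\big)\ham(x)$ differ from $\hams(x)$ only by kernels that are localized to scale $s$, have the correct $\ell^2$-normalization $D_s^2/s$, and enjoy the H\"older regularity $(iv)_s$ — all of which follow from the known $\ell^p$-bounds for the truncated Hilbert transform together with the elementary spacing estimates for the sequence $[m^\alpha]$. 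Setting $\beta$ equal to the coefficient already present in \eqref{2} (or $1$ if $T=\ham$ itself), the difference $\beta\,\ham-\beta\sum_s\hams$ is then a finite sum of kernels each obeying $(i)_s$--$(iv)_s$, which we fold into the $K_s$ already appearing in \eqref{2} to define $K'_s$. The zero-mean condition $(i)_s$ is preserved because both $\ham$ and the full windowed sum are anti-symmetric (hence have vanishing sum), and the discarded near-origin term is a multiple of $\delta_0$, which we can either absorb into $\lambda\,\textup{Id}$ or into $K'_{M^\theta}$.

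The bound $|\lambda|+|\beta|+\|\{K'_s\}\|_{A_M}\le C\|T\|_{A_M}$ then follows by taking, on the right-hand side, a near-optimal representation \eqref{2} of $T$: the new $\lambda,\beta$ are unchanged, and each $D'_s$ governing $K'_s$ is controlled by $D_s$ plus the contribution of the windowing error, the latter being bounded by a constant times $\sup_s D_s$ (uniformly in $M$) because of the scale-localization just described. For the final displayed inequality I would argue as follows: the operator $\lambda\,\textup{Id}+\sum_{M^\theta\le s<s_0}(\beta\,\hams+K'_s)$ differs from the ``honest'' partial operator $\lambda\,\textup{Id}+\sum_{M^\theta\le s<s_0}\big(\beta\,\hams+K_s\big)$ (a genuine truncation of a representation of $T$) by a sum of localized error kernels, each of $\ell^2\to\ell^2$ norm $O(D_s)$ and supported at scale $s$; invoking Lemma \ref{1:3} (or directly a Cotlar--Stein almost-orthogonality estimate across the dyadic scales $s$), these errors sum to something bounded by $\epsilon(M)\|T\|_{A_M}$ once one uses that $s\ge M^\theta$. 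The partial sum of the true representation, in turn, is controlled by $\|T\|_{\ell^2\to\ell^2}$ up to an $\epsilon(M)\|T\|_{A_M}$ tail, because the full operator has norm $\|T\|_{\ell^2\to\ell^2}$ and the omitted tail $\sum_{s\ge s_0}$ is itself an $A_M$-element of norm $\lesssim\|T\|_{A_M}$ whose $\ell^2$-norm is bounded (again by Cotlar--Stein across scales and the $D_s$-normalization) by a constant times $\sup_s D_s$, hence by $\|T\|_{A_M}$; absorbing the $M$-independent constant into the definition of $\epsilon(M)\sim M^{-\theta/2}$ as in Lemma \ref{1:5} closes the estimate.

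The main obstacle I anticipate is the verification of the H\"older regularity $(iv)_s$ for the windowing error $\big(\psi(x/2s)-\psi(x/s)\big)\ham(x)-\hams(x)$ with a uniform exponent $\gamma_0$: the raw kernel $\ham(x)=\sum_m\varphi_s(m^\alpha/s)\,\tfrac1m\big(\delta_{[m^\alpha]}-\delta_{-[m^\alpha]}\big)$ is a sum of point masses at the integers $[m^\alpha]$, which are irregularly spaced, so one cannot differentiate it directly. The fix is not to estimate $\ham$ pointwise but to use the convolution-regularity input — this is precisely the role of Lemma \ref{war_reg}, applied with $s$ replaced by $s^{1/\alpha}$ as announced in the text — together with the fact that in the algebra norm $\|\cdot\|_{A_M}$ it is the $\ell^2$-type conditions $(iii)_s,(iv)_s$ that matter, not pointwise bounds; alternatively one takes a further smooth approximation of $\ham$ at scale $s^{\alpha-1+\delta_L}\ll s$ and checks that the approximation error is negligible in the $A_M$-norm. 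A secondary technical point is keeping the infimum in \eqref{3} honest: one must verify the constructed $(\lambda,\beta,\{K'_s\})$ is an admissible representation, i.e. that the supports satisfy $(ii)_s$ exactly (which the cutoffs $\psi(x/2s)-\psi(x/s)$ guarantee, being supported in $[-s,s]$ after the obvious reindexing of the dyadic parameter) and that no mass leaks to scales below $M^\theta$.
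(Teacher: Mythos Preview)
Your argument has a genuine gap in the final $\ell^2$ bound. You write the partial sum as (full operator) minus (tail), bound the tail via Cotlar--Stein by a constant times $\sup_s D_s\le\|T\|_{A_M}$, and then propose to ``absorb the $M$-independent constant into the definition of $\epsilon(M)\sim M^{-\theta/2}$.'' But $\epsilon(M)\to0$ as $M\to\infty$, so an $M$-independent constant cannot be absorbed into it: your subtraction only yields
\[
\Big\|\lambda\,\textup{Id}+\sum_{M^\theta\le s<s_0}(\beta\,\hams+K'_s)\Big\|_{\ell^2\to\ell^2}\le \|T\|_{\ell^2\to\ell^2}+C\,\|T\|_{A_M},
\]
which is strictly weaker than the required $C\|T\|_{\ell^2\to\ell^2}+\epsilon(M)\|T\|_{A_M}$. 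The smallness of the $\epsilon(M)$ coefficient is precisely what drives the mixed-norm submultiplicativity in Theorem~\ref{5}, so this is not a cosmetic issue.

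The paper avoids the subtraction altogether by windowing the \emph{entire} kernel $T$, not just $\ham$: one sets $\tilde K_s=\psi^s\cdot K$ with $K=\sum_l K_l$, so that by construction the partial sum $\lambda+\beta\sum_{s\le s_0}\psi^s\cdot\ham+\sum_{s\le s_0}\tilde K_s$ equals the spatially truncated kernel $\psi_{s_0}\cdot T$, whose $\ell^2\to\ell^2$ norm is $\le C\|T\|_{\ell^2\to\ell^2}$ directly by Lemma~\ref{1:3}. The windowed pieces $\tilde K_s$ lose the mean-zero property $(i)_s$; restoring it produces running correction terms governed by $J_s=\sum_y K(y)\psi(y/s)$, and the paper shows $|J_s|\le C\|T\|_{\ell^2\to\ell^2}+|\lambda|$ by a short duality computation with $\chi_s=\frac{1}{2s+1}\chi_{[-s,s]}$, after which Lemma~\ref{1:5} handles $|\lambda|$. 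This is the mechanism that keeps every error term dominated by $\|T\|_{\ell^2\to\ell^2}$ (plus the harmless $\epsilon(M)\|T\|_{A_M}$ coming from Lemma~\ref{1:5}) rather than by $\|T\|_{A_M}$.

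A secondary point: your concern about verifying the H\"older regularity $(iv)_s$ for the difference between the windowed and original single-scale pieces of $\ham$, and the proposed detour through Lemma~\ref{war_reg}, is unnecessary. In the paper the new $\hams$ \emph{are} the windowed pieces $\psi^s\cdot\ham$ and sit in the $\beta$-slot of the representation; nothing from $\ham$ is pushed into the $K'_s$-slot. The $K'_s$ come entirely from windowing $K$, which already carries the regularity $(iv)_l$ at the contributing scales $l\gtrsim s$, so no regularity of $\ham$ beyond $\ell^2$-boundedness is used in this lemma.
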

\begin{proof} This lemma is standard and we include the proof for the reader`s convenience.
Let $\psi$ be the  smooth symmetric cutoff function as in the  lemma \ref{1:3}, and let $s'$ be the largest dyadic integer satisfying $s'\le M^\theta/2$. We let
\begin{equation*}
\psi^{s'}(x)=\psi(\frac{x}{s'}),\text{ and }\psi^s(x)=\psi(\frac{x}{s})-\psi(\frac{2x}{s})\quad \text{for }s>s',
\end{equation*}
and thus
\begin{equation*}
\sum_{\twoline{s_0\ge s\ge s'}{s-\text{dyadic}}}\psi^s(x) = \psi(\frac{x}{s_0})=\psi_{s_0}(x),
\end{equation*}
with
\begin{equation*}
\textup{supp }\psi^{s'}\subset\{|x|\le M^\theta\}\quad\textup{supp }\psi^{s}\subset\{s/2\le|x|\le2s\},\quad s>s'.
\end{equation*}
Given an operator $T$ with kernel of the form \eqref{2}:
\begin{equation*}
T=\lambda\,\textup{Id}+\beta\,\ham+\sum_{\twoline{M^\theta\le s<\infty}{s-\text{dyadic}}} K_s,
\end{equation*}
we can write the decomposition of its kernel
\begin{equation*}
\psi_{s_0}\,\cdot\,T=\lambda\,\textup{Id}+\beta\sum_{\twoline{s_0\ge s\ge 2s'}{s-\text{dyadic}}}\psi^s\,\cdot\,\ham+\sum_{\twoline{s_0\ge s\ge s'}{s-\text{dyadic}}}\psi^s\,\cdot\,K,
\end{equation*}
where
\begin{equation*}
K=\sum_{\twoline{M^\theta\le s<\infty}{s-\text{dyadic}}} K_s.
\end{equation*}

Now we let
\begin{gather*}
\hams= \psi^s\,\cdot\,\ham,\qquad s>s',\\
\tilde{K}_s=\psi^s\,\cdot\,K,\qquad s\ge s'.
\end{gather*}
Observe, that the kernels $\tilde{K}_s$ satisfy the requirements in the definition of the algebra $A_M$, except, possibly, for the vanishing means. We let
\begin{equation*}
K'_s(x)=\tilde{K}_s(x)-\,\frac{c_s}{s}\,\psi\Big(\frac{x}{s}\Big)\,\sum_{y\in\mathbb Z}\tilde{K}_s(y),
\end{equation*}
where the constants $c_s$ have been chosen so that
\begin{equation*}
\,\frac{c_s}{s}\,\sum_{x\in\mathbb Z}\psi\Big(\frac{x}{s}\Big)=1.
\end{equation*}
Note, that the kernels $K'_s$ do have vanishing means, and satisfy all the requirements of the definition of the algebra $A_M$, with $\|\{K'_s\}\|_{A_M}$ bounded by $\|\{K_s\}\|_{A_M}$.
Now we write the decomposition of kernel $T(x)$
\begin{equation*}
\psi_{s_0}(x)\,\cdot\,T(x)=\lambda\,\textup{Id}(x)+\beta\sum_{\twoline{s_0\ge s\ge 2s'}{s-\text{dyadic}}}\hams(x)+\sum_{\twoline{s_0\ge s\ge s'}{s-\text{dyadic}}}K'_s(x)+
\end{equation*}
\begin{equation*}
+\sum_{\twoline{s_0/2\ge s\ge s'}{s-\text{dyadic}}}J_s
\Big(\,\frac{c_s}{s}\,\psi\Big(\frac{x}{s}\Big)-\,\frac{c_{2s}}{2s}\,\psi\Big(\frac{x}{2s}\Big)\Big)+
J_{s_0}\,\frac{c_{s_0}}{s_0}\,\psi\Big(\frac{x}{s_0}\Big),
\end{equation*}
where
\begin{equation*}
J_s=\sum_{\twoline{s\ge l\ge s'}{s-\text{dyadic}}}\sum_y K'_l(y)=\sum_y K(y)\psi\Big(\frac{y}{s}\Big)
\end{equation*}
and $J_{ s'/2}=0$. Let
\begin{equation*}
K''_s(x)=K'_s(x) +J_{s/2}
\Big(\,\frac{2c_{s/2}}{s}\,\psi\Big(\frac{2x}{s}\Big)-\,\frac{c_{s}}{s}\,\psi\Big(\frac{x}{s}\Big)\Big)
\end{equation*}
We will prove below that $|J_s|\le |\lambda|+ C\|T\|_{\ell^2\rightarrow\ell^2}$. This immediately imply
\begin{equation*}
T=\lambda\,\textup{Id}+\beta\sum_{\twoline{s\ge 2s'}{s-\text{dyadic}}}\hams+
\sum_{\twoline{ s\ge s'}{s-\text{dyadic}}}K''_s
\end{equation*}
in a weak sense. Moreover, by lemma \ref{1:3} applied to $\psi_{s_0}\,\cdot\,T$ and
estimate on $\lambda$ provided by lemma \ref{1:5}, the partial sums
\begin{equation*}
\lambda\,\textup{Id}+\beta\sum_{\twoline{s_0\ge s\ge 2s'}{s-\text{dyadic}}}\hams+\sum_{\twoline{s_0\ge s\ge s'}{s-\text{dyadic}}}K''_s
\end{equation*}
represents an operator with  $\ell^2\rightarrow\ell^2$ bounded by $ C\|T\|_{\ell^2\rightarrow\ell^2}+\epsilon(M)\|T\|_{A_M}$,
and  by the construction $\|K''\|_{A_M}\le C\|T\|_{A_M}$.

We will now show the required estimate for $J_s$, that is
\begin{equation*}
\Big|\sum_{y\in\mathbb Z}K(y)\,\psi\Big(\frac{y}{s}\Big)\Big|\le c\|T\|_{\ell^2\to\ell^2}+|\lambda|.
\end{equation*}
We let
\begin{equation*}
K^s=(K+\ham)\,\cdot\,\psi_s,\qquad\chi_s=\frac{1}{2s+1}\chi_{[-s,s]},
\end{equation*}
and, since the kernel $\ham$ is antysymmetric
\begin{align*}
\Big|\sum_{y\in\mathbb Z}K(y)\,\psi_s(y)\Big|^2&=\Big|\sum_{y\in\mathbb Z}K^s(y)\,\sum_{y_1\in\mathbb Z}\chi_s(y_1)\Big|^2\\
&=\Big|\sum_{y\in\mathbb Z}K^s*\chi_s(y)\Big|^2\\
&\le8\,s\,\sum_{y\in\mathbb Z}\big|K^s*\chi_s(y)\big|^2\\
&\le8\,s\,\|K^s\|_{\ell^2\to\ell^2}^2\|\chi_s\|_{\ell^2}^2\\
&\le\frac{8\,s}{2s+1}\,\|K^s\|_{\ell^2\to\ell^2}^2\\
&\le c\,\|K+\ham\|_{\ell^2\to\ell^2}^2\\
&\le2 c\|T\|_{\ell^2\to\ell^2}^2 +2|\lambda|^2.
\end{align*}
where the estimate for $\|K^s\|_{\ell^2\rightarrow\ell^2}$ follows by lemma \ref{1:3}. Now we apply
lemma \ref{1:5}.
\end{proof}
\begin{lemma}\label{1:6}
Let $0\le \varphi\in\rm{C}_c^\infty(\mathbb R)$ and  $\varphi_s=\frac{c_s}{s}\varphi(\frac{\cdot}{s})$, with constants $c_s>0$ such that $\|\varphi_s\|_1=1$. For a given $\delta>0$ and a positive dyadic integer $s$ let   $s_1$ be such that
$s^{\frac{\alpha-1+\delta}{\alpha}}\le s_1\le s$. Then for $0<\gamma\le \gamma_0(\delta)$ we have:
\begin{enumerate}[(i)]
\item $\|\varphi_{s_1}*\hams\|_{\ell^2}^2\le\frac{c}{s}$,
\item $\|\varphi_{s_1}*\hams(\,\cdot+h\,)-\varphi_{s_1}*\hams\|_{\ell^2}^2\le\frac{c}{s}\big(\frac{h}{s}\big)^{\gamma}$.
\end{enumerate}
We can take $\gamma_0(\delta)=\min\{\frac{\delta}{4\alpha},\gamma(\frac{\delta}{2})\}$, where
$\gamma(\delta)$ is defined by \eqref{G_reg}.
\end{lemma}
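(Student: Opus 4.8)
The plan is to reduce both parts, using anti-symmetry of $\hams$, to pairing the decomposition of Lemma~\ref{war_reg} with the auto-convolution of a bump (for (i)) or of a first difference of a bump (for (ii)); what then remains is elementary bookkeeping with exponents. As the basic building block, since $\hams=H_{s^{1/\alpha}}$, Lemma~\ref{war_reg} applied with $s$ replaced by $s^{1/\alpha}$ and $\delta_L:=\delta/2$ gives
\begin{equation*}
\hams*\hams=G+E+\tfrac{C}{s^{1/\alpha}}\,\delta_0,
\end{equation*}
where $|G|+|E|\le Cs^{-1}$, $\operatorname{supp}E\subset[-s^{(\alpha-1+\delta/2)/\alpha},s^{(\alpha-1+\delta/2)/\alpha}]$, and $G$ satisfies the H\"older estimate \eqref{G_reg}, i.e. $|G(x+u)-G(x)|\le Cs^{-1}(|u|/s)^{\gamma(\delta/2)}$. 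Writing $g^{*}(x):=\overline{g(-x)}$ and $D_hg:=g(\cdot+h)-g$, anti-symmetry of $\hams$ gives, for finitely supported $\phi$,
\begin{equation*}
\|\phi*\hams\|_{\ell^2}^{2}=\bigl|(\phi*\phi^{*})*(\hams*\hams)(0)\bigr|,
\end{equation*}
so in either part it suffices to pair the three pieces $G$, $E$, $\tfrac{C}{s^{1/\alpha}}\delta_0$ against $\phi*\phi^{*}$. I would also record the elementary facts $\|\varphi_{s_1}\|_{1}=1$, $\|\varphi_{s_1}\|_{\ell^2}^{2}\le C/s_1$, $\operatorname{supp}\varphi_{s_1}\subset[-Cs_1,Cs_1]$, and, for the first difference, $\sum_x(D_h\varphi_{s_1})(x)=0$, $\operatorname{supp}D_h\varphi_{s_1}\subset[-C(s_1+|h|),C(s_1+|h|)]$, $\|D_h\varphi_{s_1}\|_{1}\le C\min(|h|/s_1,1)$ and $\|D_h\varphi_{s_1}\|_{\infty}\le\tfrac{C}{s_1}\min(|h|/s_1,1)$.

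For (i) I take $\phi=\varphi_{s_1}$, so $\Phi:=\varphi_{s_1}*\varphi_{s_1}^{*}$ has $\|\Phi\|_{1}=1$, $\|\Phi\|_{\infty}\le C/s_1$, $\operatorname{supp}\Phi\subset[-Cs_1,Cs_1]$, and $\Phi(0)=\|\varphi_{s_1}\|_{\ell^2}^{2}$. Then $|\Phi*G(0)|\le\|\Phi\|_{1}\|G\|_{\infty}\le C/s$; next $|\Phi*E(0)|\le\|\Phi\|_{\infty}\|E\|_{1}\le Cs_1^{-1}s^{(\delta/2-1)/\alpha}\le C/s$, using $s_1\ge s^{(\alpha-1+\delta)/\alpha}$; and $\tfrac{C}{s^{1/\alpha}}|\Phi(0)|\le\tfrac{C}{s^{1/\alpha}s_1}\le C/s$, using $s_1\ge s^{(\alpha-1)/\alpha}$. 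Adding these yields (i).

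For (ii), if $|h|\ge s$ the claim follows from (i) since $\|\varphi_{s_1}*\hams(\cdot+h)-\varphi_{s_1}*\hams\|_{\ell^2}\le2\|\varphi_{s_1}*\hams\|_{\ell^2}$; so assume $|h|\le s$ and apply the reduction with $\phi=D_h\varphi_{s_1}$, noting $\varphi_{s_1}*\hams(\cdot+h)-\varphi_{s_1}*\hams=(D_h\varphi_{s_1})*\hams$. Put $\Psi_h:=(D_h\varphi_{s_1})*(D_h\varphi_{s_1})^{*}$; it has $\sum_x\Psi_h(x)=0$, $\operatorname{supp}\Psi_h\subset[-C(s_1+|h|),C(s_1+|h|)]$, $\Psi_h(0)=\|D_h\varphi_{s_1}\|_{\ell^2}^{2}$, $\|\Psi_h\|_{1}\le\|D_h\varphi_{s_1}\|_{1}^{2}$, $\|\Psi_h\|_{\infty}\le\|D_h\varphi_{s_1}\|_{\ell^2}^{2}$. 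For the $G$-piece I would use the cancellation $\sum_x\Psi_h(x)=0$ to write $\Psi_h*G(0)=\sum_y\Psi_h(y)\bigl(G(-y)-G(0)\bigr)$ and then invoke \eqref{G_reg} with $|y|\le C(s_1+|h|)$; for the $E$-piece, $|\Psi_h*E(0)|\le\|\Psi_h\|_{\infty}\|E\|_{1}$; and the $\delta_0$-piece equals $\tfrac{C}{s^{1/\alpha}}\|D_h\varphi_{s_1}\|_{\ell^2}^{2}$. Splitting into the ranges $|h|\le s_1$ and $s_1\le|h|\le s$, inserting the elementary bounds above, and using $s^{(\alpha-1+\delta)/\alpha}\le s_1\le s$ throughout to absorb the negative powers of $s$, a direct computation with exponents shows each of the three pieces is $\le\frac{C}{s}(|h|/s)^{\gamma}$ provided $\gamma\le\gamma(\delta/2)$ (needed for the $G$-piece) and $\gamma\le\tfrac{\delta}{4\alpha}$ (needed for the $E$- and $\delta_0$-pieces), i.e. provided $\gamma\le\gamma_0(\delta)$.

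The single genuine ingredient here is Lemma~\ref{war_reg}; granting it, the argument is bookkeeping. I expect the one delicate point to be the range $s_1\le|h|\le s$ of part (ii): there the difference $D_h\varphi_{s_1}$ carries no useful smoothness, so the required gain $(|h|/s)^{\gamma}$ has to be squeezed out purely from the H\"older regularity of $G$ together with the lower bound $s_1\ge s^{(\alpha-1+\delta)/\alpha}$ alone; carrying out that exponent count is exactly what pins down the range $\gamma\le\gamma_0(\delta)$ and, in particular, forces the choice $\delta_L=\delta/2$ with a little room to spare.
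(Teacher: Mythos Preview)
Your proof is correct and follows essentially the same route as the paper: apply Lemma~\ref{war_reg} with $\delta_L=\delta/2$ to decompose $\hams*\hams=G+E+Cs^{-1/\alpha}\delta_0$ and estimate the three contributions to $\|\phi*\hams\|_{\ell^2}^2$ separately, with $\phi=\varphi_{s_1}$ or $\phi=D_h\varphi_{s_1}$. The only organizational difference is in the $G$-piece of (ii): the paper moves the shift directly onto $G$, writing $(\varphi_{s_1}^h-\varphi_{s_1})*G=\varphi_{s_1}*(G^h-G)$ and invoking \eqref{G_reg} on $G^h-G$, which yields the factor $(|h|/s)^{\gamma(\delta/2)}$ in one stroke and avoids your case split $|h|\lessgtr s_1$; the paper also simply deduces (i) from (ii) rather than treating it separately.
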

\begin{proof}
It suffices to prove {\it (ii)} with $|h|\le Cs$ since it implies  {\it (i)}.
For the moment, the superscript ${ }^h$ denotes the translation of a function by $h$. We have:
\begin{align*}
&<(\varphi_{s_1}^h-\varphi_{s_1})*\hams,(\varphi_{s_1}^h-\varphi_{s_1})*\hams>=<(\varphi_{s_1}^h-\varphi_{s_1})*G_s,\varphi_{s_1}^h-\varphi_{s_1}>+\\
&\qquad\qquad+\|\varphi_{s_1}^h-\varphi_{s_1}\|_{\ell^2}^2\,\frac{1}{s^{1/\alpha}}+<(\varphi_{s_1}^h-\varphi_{s_1})*E_s,(\varphi_{s_1}^h-\varphi_{s_1})>\\
&\qquad=I+II+III.
\end{align*}
In the above we have applied Lemma {\ref{war_reg}} with $\delta_l=\delta/2$ to obtain  the
decomposition : $\hams*\hams=G_s+\frac{C\delta_0}{s^{1/\alpha}}+E_s$, satisfying estimates
\eqref{G_sup},   \eqref{G_reg}. We have for $\gamma\le \gamma(\delta/2)$, where  $\gamma(\delta)$
is defined by \eqref{G_reg}:
\begin{align*}
|I|&=|<(\varphi_{s_1}^h-\varphi_{s_1})*G_s,\varphi_{s_1}^h-\varphi_{s_1}>|\\
&=<\varphi_{s_1}*(G^h_s-G_s),\varphi^h_{s_1}-\varphi_{s_1}>\\
&\le C\,\frac{1}{s}\Big(\frac{|h|}{s}\Big)^\gamma\|\varphi_{s_1}\|_{\ell^1}^2.\\
|II|&\le C\,\frac{1}{s^{1/\alpha}}\,\cdot\,\frac{1}{s_1}\cdot\big(\frac{|h|}{s_1}\big)^\gamma\\
&\le C\,\frac{1}{s^{1/\alpha}}\,\cdot\,\frac{|h|^\gamma}{s_1}\\
&\le C\,\frac{1}{s^{1/\alpha}}\,\cdot\,\frac{|h|^\gamma}{s^{1-1/\alpha+\delta/\alpha}}\\
&\le C\,\frac{1}{s}\,\big(\frac{|h|}{s}\big)^{\delta/2\alpha},
\end{align*}
for $\gamma\le\delta/2\alpha$ and  $s_1\ge s^{1-1/\alpha+\delta/\alpha}$. By H\"older regularity of
$\varphi_{s_1}$
\begin{align*}
|III|&\le C\,\big\|\varphi^h_{s_1}-\varphi_{s_1}\big\|_{\ell^\infty}\|E_s\|_{\ell^1}\\
&\le C\,\Big(\frac{|h|}{s_1}\Big)^\gamma\,\cdot\,\frac{1}{s_1}\,\cdot\,\frac{1}{s}\,\cdot\,s^{1-1/\alpha+\delta/2\alpha}\\
&\le \frac{C}{s^{1/\alpha}}\,\cdot\,\frac{1}{s_1}\,|h|^{\gamma}|s|^{\delta/2\alpha}\\
&\le\frac{C}{s^{1/\alpha}}\,\frac{1}{s^{1-1/\alpha+\delta/\alpha}}\,\cdot\, s^{\delta/2\alpha}\,|h|^\gamma\\
&\le\frac{C}{s}\,\Big(\frac{|h|}{s}\Big)^{\delta/4\alpha}\,\cdot\,s^{-\delta/4\alpha},
\end{align*}
for $\delta/4\alpha\ge\gamma$ and $s_1$ as in II.
\end{proof}

Let
\begin{equation*}
\tilde{\mathbb T_s}=\sum_{M^\theta<s'<s}\big(\tilde{\mathcal H_{s'}}+\tilde{K_{s'}'}\big),
\end{equation*}
where the kernels $\tilde{ \mathcal H_{s'}},\tilde{K_{s'}'}$ comes from the representation of
$\tilde{T}$ in the sense of Lemma \ref{1:4}.

\begin{lemma}\label{1:7}
For  $\gamma\le \gamma_0(\delta)$ and $s^{1-1/\alpha +\delta/\alpha}\le s_1 \le s$ we have
\begin{enumerate}[(i)]
\item $\|\varphi_{s_1}*\hams*\tilde{\mathbb T_s}\|_{\ell^2}^2\le \frac{C}{s}(\|\tilde T\|^2_{\ell^2\to\ell^2}+\frac{C}{M^\theta}\|\tilde T\|_A^2)$,
\item $\|\varphi_{s_1}*\hams*\tilde{\mathbb T_s}(\,\cdot+h\,)-\varphi_{s_1}*\hams*\tilde{ \mathbb T_s}\|_{\ell^2}^2\le \frac{C}{s}\Big(\frac{|h|}{s}\Big)^\gamma(\|\tilde T\|^2_{\ell^2\to\ell^2}+\frac{C}{M^\theta}\|\tilde T\|_A^2)$.
\end{enumerate}
\end{lemma}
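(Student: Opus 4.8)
The plan is to read off both estimates from the localized bounds of Lemma~\ref{1:6}, by factoring the operator $\tilde{\mathbb T_s}$ out of the convolution and using its uniform $\ell^2$-boundedness. Set $g=\varphi_{s_1}*\hams$. Since the hypotheses $\gamma\le\gamma_0(\delta)$ and $s^{1-1/\alpha+\delta/\alpha}=s^{(\alpha-1+\delta)/\alpha}\le s_1\le s$ are exactly those of Lemma~\ref{1:6}, that lemma gives $g\in\ell^2(\mathbb Z)$ with
\[
\|g\|_{\ell^2}^2\le\frac{c}{s},\qquad \|g^h-g\|_{\ell^2}^2\le\frac{c}{s}\Big(\frac{|h|}{s}\Big)^{\gamma},
\]
where $g^h$ is the translate of $g$ by $h$. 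Because translation commutes with convolution and convolution is associative, the left-hand side of $(i)$ equals $\|g*\tilde{\mathbb T_s}\|_{\ell^2}^2$ and that of $(ii)$ equals $\|(g^h-g)*\tilde{\mathbb T_s}\|_{\ell^2}^2$. Reading convolution against the kernel $\tilde{\mathbb T_s}$ as the action of the bounded operator $\tilde{\mathbb T_s}$, one has $\|f*\tilde{\mathbb T_s}\|_{\ell^2}\le\|\tilde{\mathbb T_s}\|_{\ell^2\to\ell^2}\,\|f\|_{\ell^2}$ for every $f\in\ell^2$, so it remains to bound $\|\tilde{\mathbb T_s}\|_{\ell^2\to\ell^2}$.

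For this I would use Lemma~\ref{1:4}. By definition $\tilde{\mathbb T_s}=\sum_{M^\theta<s'<s}(\tilde{\mathcal H_{s'}}+\tilde{K'_{s'}})$ is obtained from the partial sum in the last display of Lemma~\ref{1:4} — with the free truncation parameter there taken to be $s$ — by deleting the summand $\lambda\,\textup{Id}$. Hence
\[
\|\tilde{\mathbb T_s}\|_{\ell^2\to\ell^2}\le C\|\tilde T\|_{\ell^2\to\ell^2}+\epsilon(M)\|\tilde T\|_{A_M}+|\lambda|,
\]
and Lemma~\ref{1:5} absorbs $|\lambda|$ into $\|\tilde T\|_{\ell^2\to\ell^2}+\epsilon(M)\|\tilde T\|_{A_M}$. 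Squaring, using $(a+b)^2\le 2a^2+2b^2$ and recalling that one may take $\epsilon(M)\le C M^{-\theta/2}$, this gives
\[
\|\tilde{\mathbb T_s}\|_{\ell^2\to\ell^2}^2\le C\Big(\|\tilde T\|_{\ell^2\to\ell^2}^2+\frac{C}{M^\theta}\|\tilde T\|_{A_M}^2\Big).
\]
Multiplying by $\|g\|_{\ell^2}^2\le c/s$ yields $(i)$, and multiplying by $\|g^h-g\|_{\ell^2}^2\le\frac{c}{s}(|h|/s)^{\gamma}$ yields $(ii)$.

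With Lemmas~\ref{war_reg}, \ref{1:4}, \ref{1:5} and \ref{1:6} in hand this lemma is a short corollary, so I do not expect a real obstacle here. The one step that needs care is the operator-norm estimate for $\tilde{\mathbb T_s}$: one must recognise it as the Lemma~\ref{1:4} partial sum with the $\lambda\,\textup{Id}$ term removed and then carry out the bookkeeping — absorbing $|\lambda|$ via Lemma~\ref{1:5} and noting that $\epsilon(M)^2\asymp M^{-\theta}$, which is exactly what produces the factor $\tfrac1{M^\theta}$ in the statement. The substantive input, the H\"older-type regularity of $\varphi_{s_1}*\hams$ furnished by Lemma~\ref{1:6}$(ii)$ and ultimately by the convolution decomposition of Lemma~\ref{war_reg}, has already been established.
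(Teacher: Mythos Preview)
Your proof is correct and follows the same approach the paper intends: the paper's own proof is the single sentence ``Immediate, from Lemmas~\ref{1:4} and \ref{1:6},'' and you have simply unpacked this by applying Lemma~\ref{1:6} to $g=\varphi_{s_1}*\hams$, factoring out $\tilde{\mathbb T_s}$ as an $\ell^2$-bounded convolution operator, and reading off its operator norm from the last display of Lemma~\ref{1:4} (together with Lemma~\ref{1:5} to handle the $\lambda\,\textup{Id}$ term). The bookkeeping with $\epsilon(M)^2\le C M^{-\theta}$ that produces the $M^{-\theta}$ factor is exactly right.
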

\begin{proof} Immediate, from Lemmas
\ref{1:4} and \ref{1:6}.
\end{proof}
\begin{lemma}\label{1:8}
Let $0\le l\le s^{1-1/\alpha+\delta/\alpha}$, $s^\theta =s^{\alpha-1+\delta}\le s_1\le s$ and
 $\psi_l=\varphi_l-\varphi_{2l}$, where $\varphi_l$ has been defined in Lemma \ref{1:6}.
We have for  $\gamma\le \gamma_0(\delta)$:
\begin{enumerate}[(i)]
\item $\|\psi_l*\hams*\mathcal H_{s_1}\|_{\ell^2}^2\le\frac{C}{|s|^{1+\delta/2}}$,
\item $\|\psi_l*\hams*\mathcal H_{s_1}(\,\cdot+h\,)-\psi_l*\hams*\mathcal H_{s_1}\|_{\ell^2}^2\le\frac{C}{|s|^{1+\delta/4\alpha}}\cdot\Big(\frac{|h|}{|s|}\Big)^{\gamma}$,
\item $\|\psi_l*\hams*K_{s_1}\|_{\ell^2}^2\le\frac{C}{|s|^{1+\delta/2\alpha}}$,
\item $\|\psi_l*\hams*K_{s_1}(\,\cdot+h\,)-\psi_l*\hams*K_{s_1}\|_{\ell^2}^2\le\frac{C}{|s|^{1+\delta/4\alpha}}
    \cdot\Big(\frac{|h|}{|s|}\Big)^{\gamma}$.
\end{enumerate}
\end{lemma}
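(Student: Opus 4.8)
The plan is to treat all four bounds by one mechanism. Write $\widetilde f(x)=\overline{f(-x)}$ and $\Psi:=\psi_l*\widetilde\psi_l$; then $\Psi$ is real, even, supported in $[-4l,4l]$, has vanishing sum, and satisfies $\|\Psi\|_1\le4$, $\|\Psi\|_\infty\le C l^{-1}$. For (i)--(ii) I would start from
\[
\|\psi_l*\hams*\mathcal H_{s_1}\|_{\ell^2}^2=\bigl(\Psi*(\hams*\widetilde\hams)*(\mathcal H_{s_1}*\widetilde{\mathcal H_{s_1}})\bigr)(0),
\]
and for (iii)--(iv) from the same identity with $\mathcal H_{s_1}*\widetilde{\mathcal H_{s_1}}$ replaced by $L:=K_{s_1}*\widetilde K_{s_1}$, which by $(i)_{s_1}$--$(iv)_{s_1}$ has vanishing sum, support in $[-2s_1,2s_1]$, $\|L\|_1\le C$, $\|L\|_\infty=\|K_{s_1}\|_{\ell^2}^2\le C s_1^{-1}$ and $\|L(\cdot+h)-L\|_\infty\le C s_1^{-1}(|h|/s_1)^{\gamma_0/2}$, so that $L$ plays exactly the role of one of the smooth pieces $G_{s_1}$ below. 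I would then feed in the splitting of Lemma~\ref{war_reg}, applied at scale $s^{1/\alpha}$ to $\hams$ (and at scale $s_1^{1/\alpha}$ to $\mathcal H_{s_1}$), with $\delta_L$ a small multiple of $\delta$ to be fixed: $\hams*\widetilde\hams$ decomposes as $G_s+E_s+C\,s^{-1/\alpha}\delta_0$ with $|G_s|,|E_s|\le C s^{-1}$, $\operatorname{supp}E_s\subset[-s^{(\alpha-1+\delta_L)/\alpha},s^{(\alpha-1+\delta_L)/\alpha}]$, $\operatorname{supp}G_s\subset[-Cs,Cs]$ and $|G_s(x+u)-G_s(x)|\le C s^{-1}|u/s|^{\gamma(\delta_L)}$, and likewise at scale $s_1$ (all in the notation of Lemma~\ref{1:6}, with $s$, $s_1$ the radii of the supports).

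Multiplying out yields a fixed list of terms $(\Psi*A_1*A_2)(0)$, each $A_j$ a $G$, an $E$, or a point mass times the relevant inverse power of its scale, and I would dispatch them by cases. The purely singular term $C^2 s^{-1/\alpha}s_1^{-1/\alpha}\Psi(0)$ is at most $C s^{-1/\alpha}s_1^{-1/\alpha}l^{-1}$, hence, using $s_1\ge s^{\alpha-1+\delta}$, $l\ge1$ and $\alpha<2$, at most $C s^{-1-\delta/\alpha}\le C s^{-1-\delta/2}$; for (iii)--(iv) its analogue $C s^{-1/\alpha}\|K_{s_1}\|_{\ell^2}^2\le C s^{-1/\alpha}s_1^{-1}$ is at most $C s^{-1-\delta/2\alpha}$. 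Each term with exactly one point mass carries a spare factor $s^{-1/\alpha}$ or $s_1^{-1/\alpha}\le s^{-(\alpha-1+\delta)/\alpha}$, and since $\alpha+1/\alpha\ge2$ forces both $(\alpha-1+\delta)/\alpha\ge\delta/2$ and $s^{-1/\alpha}s_1^{-1}\le s^{-1-\delta/2}$, pairing $\Psi$ trivially (by $\|\Psi\|_1$ and the sup-bounds on $G,E$) with the remaining factor already gives the estimate. Among the smooth$\,\times\,$smooth terms, any one involving an $E$ is killed by the smallness of $\operatorname{supp}E$, e.g. $\|E_s*G_{s_1}\|_\infty\le C s^{-(1-\delta_L)/\alpha}s_1^{-1}\le C s^{-1-\delta/2}$ as soon as $\delta_L\le(\alpha-1)^2+\alpha\delta/2$, an inequality one checks from $\alpha\le1+\tfrac1{1000}$ and $\delta<1$. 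The one genuinely delicate term is $(\Psi*G_s*G_{s_1})(0)$ (respectively $(\Psi*G_s*L)(0)$): using the vanishing sum of $\Psi$ I would write it as $\sum_y\Psi(y)\bigl((G_s*G_{s_1})(-y)-(G_s*G_{s_1})(0)\bigr)$ and combine the H\"older regularity of $G_s$ with $\|G_{s_1}\|_1\le C$ to bound it by $C s^{-1}(l/s)^{\gamma(\delta_L)}\le C s^{-1-\gamma(\delta_L)(1-\delta)/\alpha}$, which is $\le C s^{-1-\delta/2}$ provided $\delta_L$ is chosen so that $\gamma(\delta_L)\ge\alpha\delta/(2(1-\delta))$. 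This proves (i) and (iii). For (ii) and (iv) I would rerun the case analysis with $\psi_l$ replaced by $\psi_l^h-\psi_l$, i.e. with $\Psi$ replaced by minus its symmetric second difference: the singular term then carries the extra factor $\min(1,(|h|/l)^2)$ (from smoothness of $\Psi$ at scale $l$ and $\operatorname{supp}\Psi\subset[-4l,4l]$), which for $\gamma\le\delta/4\alpha$ already yields $C s^{-1-\delta/4\alpha}(|h|/s)^\gamma$, while every other term is bounded simultaneously by its (i)/(iii) value (via $\|f(\cdot+h)-f\|_\infty\le2\|f\|_\infty$ applied to the relevant kernel) and by a crude H\"older estimate of the form $C s^{-1}(|h|/s)^{\gamma(\delta_L)}$ coming from the $G_s$ (or $L$) factor; the geometric mean of the two, with $\gamma\le\gamma_0(\delta)=\min\{\delta/4\alpha,\gamma(\delta/2)\}$ suitably small, gives $C s^{-1-\delta/4\alpha}(|h|/s)^\gamma$. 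The case $|h|\ge l$ reduces to $|h|\sim l$ by the triangle inequality in $\ell^2$.

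I expect the only real obstacle to be that single smooth$\,\times\,$smooth term. All the other pieces are controlled by soft facts --- the size bounds $|G|,|E|\le C(\text{scale})^{-1}$, the support of $E$ (and of $L$), and the extra decay $s_1^{-1/\alpha}$ or $\|K_{s_1}\|_{\ell^2}^2$ furnished by the third convolution factor --- and these are insensitive both to how small $l$ is and to how close to $1$ the ratio $(\log s_1)/(\log s)$ may be. For $(\Psi*G_s*G_{s_1})(0)$, by contrast, the vanishing sum of $\Psi$ only buys the gain $(l/s)^{\gamma(\delta_L)}$ at the scale $l\le s^{(\alpha-1+\delta)/\alpha}$, and converting this into the required powers of $s^{-\delta}$ forces the H\"older exponent supplied by Lemma~\ref{war_reg} to be comparable to $\delta$ for a suitable $\delta_L$ --- this is exactly the point at which the full strength of Lemma~\ref{war_reg} is needed, as anticipated in Remark~(iv). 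Closing the bookkeeping of $\delta_L$ (small enough for the $E$-terms, large enough here) together with the various powers of $\alpha-1$ and $\delta$ simultaneously is the step that requires genuine care.
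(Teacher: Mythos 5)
Your overall mechanism is sound and rests on the same central idea as the paper — apply Lemma~\ref{war_reg} to the auto-correlation of the rough Hilbert kernel, then exploit the vanishing sum of $\psi_l$ against the H\"older regularity of the smooth piece $G_s$ — but you go about the bookkeeping in a noticeably heavier way than the paper does, in two respects.

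First, the paper only decomposes \emph{one} of the two auto-correlations. It writes $\|\psi_l*\hams*\mathcal H_{s_1}\|_{\ell^2}^2=\langle\psi_l*(\hams*\hams),\,\mathcal H_{s_1}*\mathcal H_{s_1}*\psi_l\rangle$, splits only $\hams*\hams=G_s+E_s+Cs^{-1/\alpha}\delta_0$, and keeps $\mathcal H_{s_1}*\mathcal H_{s_1}$ whole, using just $\|\mathcal H_{s_1}*\mathcal H_{s_1}*\psi_l\|_{\ell^1}\le C$ for the $G_s$ term, $\|\mathcal H_{s_1}\|_{\ell^2}^2\lesssim s_1^{-1/\alpha}$ for the $E_s$ and point-mass terms. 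That is a three-term estimate. You decompose both sides, creating the nine-term array of $(\Psi*A_1*A_2)(0)$, and then have to show that the many mixed $E$-and-$\delta$ pieces do not interfere. You do carry this out correctly (and your observation that for (iii)--(iv) the correlation $L=K_{s_1}*\widetilde K_{s_1}$ inherits the relevant size, support, $\ell^1$, and H\"older bounds from $(i)_{s_1}$--$(iv)_{s_1}$ and so can be treated like a single smooth piece is a nice way to unify the cases), but you are doing more work than necessary to reach the same $Cs^{-1}(l/s)^{\gamma(\delta_L)}$ bound for the one delicate smooth$\times$smooth term, which is where both arguments bottleneck.

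Second, and more significantly, you rerun the entire case analysis for (ii) and (iv) with $\psi_l$ replaced by $\psi_l^h-\psi_l$ (equivalently $\Psi$ by minus its second difference), invoking smoothness of $\Psi$ at scale $l$ and a geometric-mean interpolation against a crude H\"older bound. The paper dispatches (ii) and (iv) in a single sentence: they follow from (i) and (iii) because $|h|\ge1$. Indeed, the trivial bound $\|f^h-f\|_{\ell^2}^2\le 4\|f\|_{\ell^2}^2$ together with (i) gives $\lesssim s^{-1-\delta/2}$, and since $(|h|/s)^\gamma\ge s^{-\gamma}$ for $|h|\ge1$, the target $s^{-1-\delta/4\alpha}(|h|/s)^\gamma$ follows as long as $\gamma\le\delta/2-\delta/4\alpha$, which is guaranteed by $\gamma\le\gamma_0(\delta)\le\delta/4\alpha$ and $\alpha\ge1$. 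Your heavier route to (ii)/(iv) is not wrong, but it misses this shortcut and therefore requires closing a genuinely more intricate chain of estimates (the geometric-mean step, the case split $|h|\lessgtr l$), with attendant risk of losing constants.

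Your diagnosis that the single smooth$\times$smooth term $(\Psi*G_s*G_{s_1})(0)$ is the only genuinely delicate one, that its estimate requires the H\"older exponent $\gamma(\delta_L)$ from Lemma~\ref{war_reg} to be comparable to $\delta$, and that this is exactly what Remark~(iv) is alluding to, is correct and reflects an accurate understanding of where the argument is tight — the paper's term $I$ plays precisely this role and is estimated the same way. So the proposal is essentially right in substance; the differences are that you repeat work the paper avoids (decomposing the second correlation) and overlook the cheap reduction of (ii)/(iv) to (i)/(iii) via $|h|\ge1$.
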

\begin{proof}
(ii) and (iv) follow from (i) and (iii), since $|h|\ge1$. We will now prove (i). We again use Lemma \ref{war_reg}
with $\delta_L=\delta/2$.
\begin{align*}
&\|\psi_l*\hams*\mathcal H_{s_1}\|_{\ell^2}^2=<\psi_l*G_s,\mathcal H_{s_1}*\mathcal H_{s_1}*\psi_l>+\\
&\qquad+<\psi_l*E_s,\mathcal H_{s_1}*\mathcal H_{s_1}*\psi_l>+\\
&\qquad+<\psi_l\cdot\frac{1}{s^{1/\alpha}},\mathcal H_{s_1}*\mathcal H_{s_1}*\psi_l>\\
&\quad =I+II+III.
\end{align*}
We estimate each part:
\allowdisplaybreaks
\begin{align*}
|I|&\le\|\psi_l*G_s\|_{\ell^\infty}\cdot\|\mathcal H_{s_1}*\mathcal H_{s_1}*\psi_l\|_{\ell^1}\\
&=C\Big(\frac{|s|^{1-1/\alpha+\delta/\alpha}}{|s|}\Big)^\gamma\cdot\frac{1}{|s|}\\
&\le\frac{C}{|s|}\cdot\frac{1}{|s|^{\delta_1}}.\\
|III|&=|<\psi_l*\mathcal H_{s_1},\mathcal H_{s_1}*\psi_l>|\cdot\frac{1}{s^{1/\alpha}}\\
&\le\|\mathcal H_{s_1}\|_{\ell^2}^2\,\|\psi_l\|_{\ell^1}^2\cdot\frac{1}{s^{1/\alpha}}\\
&\le\frac{C}{s_1^{1/\alpha}}\cdot\frac{1}{s^{1/\alpha}}\\
&\le\frac{1}{s^{1/\alpha}}\cdot\frac{1}{s^{1-1/\alpha+\delta/\alpha}}\\
&\le\frac{1}{s^{1+\delta/\alpha}}.\\
|II|&=|<E_s,\mathcal H_{s_1}*\mathcal H_{s_1}*\psi_l*\psi_l>|\\
&\le|<E_s*\mathcal H_{s_1},\mathcal H_{s_1}*\psi_l*\psi_l>|\\
&\le \|E_s\|_{\ell^1}\cdot\|\mathcal H_{s_1}\|_{\ell^2}^2\\
&\le\frac{s^{1-1/\alpha+\delta/2\alpha}}{s}\cdot\frac{1}{s_1^{1/\alpha}}\\
&\le\frac{s^{1-1/\alpha+\delta/2\alpha}}{s\cdot s^{1-1/\alpha+\delta/\alpha}}\\
&\le\frac{1}{s^{1+\delta/2\alpha}}.
\end{align*}
The estimates of $|II|$ is very crude but it suffices for our purposes.
The proof of (iii) is identical.
\end{proof}
\begin{lemma}\label{1:9}
We notice:
\begin{gather}
\|\hams*\tilde{\mathbb T}_s\|_{\ell^2}^2\le\frac{C}{s}\,
\Big(\|\tilde T\|_{\ell^2}^2+\|\tilde T\|_A\cdot(\frac{1}{s^{\delta/4\alpha}}+\epsilon(s))\Big),\label{1:11}\\
\|\hams* \tilde{\mathbb T}_s(\,\cdot+h\,)-\hams*\tilde{\mathbb
T}_s\|_{\ell^2}^2\le\frac{C}{|s|}\,\Big(\frac{|h|}{|s|}\Big)^\gamma\,\Big(\|\tilde
T\|_{\ell^2}^2+\|\tilde T\|_A\cdot(\frac{1}{s^{\delta/4\alpha}}+\epsilon(s))\Big),\label{1:12}
\end{gather}
where ${\mathbb T_s}, \tilde{\mathbb T_s}$ has been defined before Lemma \ref{1:7}.
\end{lemma}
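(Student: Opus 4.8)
The plan is to reduce \eqref{1:11} and \eqref{1:12} to a Littlewood--Paley splitting of $\hams*\tilde{\mathbb T}_s$ at the critical scale $L\approx s^{\theta/\alpha}=s^{1-1/\alpha+\delta/\alpha}$, feeding the high-scale part into Lemma \ref{1:7} and the low-scale part, term by term, into Lemma \ref{1:8}. Concretely, I would first assume (after possibly enlarging $M_0$, and since otherwise $\tilde{\mathbb T}_s$ is empty) that $M^\theta\le s\le M$ with $s$ large, let $L$ be the least dyadic integer with $L\ge s^{\theta/\alpha}$, so that $s^{\theta/\alpha}\le L\le 2s^{\theta/\alpha}\le s$ because $0<\theta<\alpha$, and use the telescoping identities $\psi_l=\varphi_l-\varphi_{2l}$ (notation of Lemma \ref{1:6}) to write the kernel identity $\delta_0=\varphi_L+\sum_{l<L,\ l\ \text{dyadic}}\psi_l$, the smallest-scale end being the fixed mean-zero kernel $\delta_0-\varphi_1$, treated like a $\psi_l$-piece of bounded scale. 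Convolving and using that $\tilde{\mathbb T}_s$ is finitely supported,
\begin{equation*}
\hams*\tilde{\mathbb T}_s=\varphi_L*\hams*\tilde{\mathbb T}_s+\sum_{\twoline{l<L}{l\ \text{dyadic}}}\psi_l*\hams*\tilde{\mathbb T}_s=:\mathrm I+\mathrm{II}.
\end{equation*}

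The term $\mathrm I$ I would dispose of by a single application of Lemma \ref{1:7}: the scale $L$ lies in $[s^{\theta/\alpha},s]$, so part (i) gives $\|\mathrm I\|_{\ell^2}^2\le\frac Cs\big(\|\tilde T\|_{\ell^2\to\ell^2}^2+M^{-\theta}\|\tilde T\|_A^2\big)$ and part (ii) the same with an extra factor $(|h|/s)^\gamma$. This already produces the main term $\|\tilde T\|_{\ell^2\to\ell^2}^2$; the leftover $M^{-\theta}\|\tilde T\|_A^2$ is absorbed into $\|\tilde T\|_A\,(s^{-\delta/4\alpha}+\epsilon(s))$ using $M\ge s$, the inequality $\theta>\delta>\delta/4\alpha$, the bound $\epsilon(M)\le CM^{-\theta/2}$ from the proof of Lemma \ref{1:5}, and the a priori bound $\|\tilde T\|_A\le K$.

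The heart of the matter is $\mathrm{II}$. Here I would expand $\tilde{\mathbb T}_s=\sum_{M^\theta<s'<s}(\tilde{\mathcal H}_{s'}+\tilde K'_{s'})$ from the representation of Lemma \ref{1:4} and apply Lemma \ref{1:8} to each summand separately: every dyadic $l<L$ obeys $l\le L/2<s^{\theta/\alpha}$, and every $s'$ obeys $s^\theta\le M^\theta<s'<s$, so the hypotheses of Lemma \ref{1:8} hold for each $\psi_l*\hams*\tilde{\mathcal H}_{s'}$ and $\psi_l*\hams*\tilde K'_{s'}$ (identifying $\tilde{\mathcal H}_{s'}$ with $\mathcal H_{s'}$ up to harmless cut-off errors, and bounding the structure constants, the Hilbert-transform coefficient $|\beta|$ and $D_{s'}$, by $\|\tilde T\|_A$). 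Each summand then contributes, in $\ell^2$, at most $C\|\tilde T\|_A^2\,s^{-1-\delta/2\alpha}$, and its translated difference at most $C\|\tilde T\|_A^2\,s^{-1-\delta/2\alpha+\gamma}(|h|/s)^\gamma$; summing over the at most $(\log s)^2$ pairs $(l,s')$ by the triangle inequality and Cauchy--Schwarz costs a factor $(\log s)^4$.

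I expect the only real obstacle to be the bookkeeping of these logarithmic losses, since the target exponent in \eqref{1:11}--\eqref{1:12} is $\delta/4\alpha$ while Lemma \ref{1:8} only buys $\delta/2\alpha$ (and $\delta/2\alpha-\gamma$ after the passage to the H\"older bound). The fix is to fix $\gamma\le\gamma_0(\delta)$ strictly small enough that $\delta/2\alpha-\gamma-\delta/4\alpha>0$ and then take $M_0$ large enough that $(\log s)^4\le s^{\delta/4\alpha-\gamma}$ for every $s\ge M_0^\theta$; with this, $\|\mathrm{II}\|_{\ell^2}^2\le\frac Cs\cdot\frac{\|\tilde T\|_A^2}{s^{\delta/4\alpha}}$, and similarly the $(|h|/s)^\gamma$-weighted bound for $\mathrm{II}^h-\mathrm{II}$. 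Adding the estimates for $\mathrm I$ and $\mathrm{II}$ and using once more $\|\tilde T\|_A^2\le K\|\tilde T\|_A$ yields \eqref{1:11}; \eqref{1:12} follows the same way for $|h|\le s$ and is immediate from \eqref{1:11} when $|h|>s$. Everything else is the routine verification that the scales line up with the hypotheses of Lemmas \ref{1:4}, \ref{1:7} and \ref{1:8}.
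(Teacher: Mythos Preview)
Your proposal is correct and follows essentially the same route as the paper: decompose $\delta_0=\varphi_{s_1}+\sum_{l}\psi_l$ at the critical scale $s_1\approx s^{1-1/\alpha+\delta/\alpha}$, feed the $\varphi_{s_1}$ piece into Lemma~\ref{1:7} and, after expanding $\tilde{\mathbb T}_s=\sum_{s'}(\tilde{\mathcal H}_{s'}+\tilde K'_{s'})$, feed each $\psi_l$ piece into Lemma~\ref{1:8}, then absorb the $O((\log s)^2)$ summands into the power saving. The only cosmetic differences are your explicit treatment of the bottom scale (in the paper $\varphi\in C_c^\infty(-\tfrac12,\tfrac12)$ forces $\varphi_1=\delta_0$, so no leftover $\delta_0-\varphi_1$ arises) and the precise exponent bookkeeping.
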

\begin{proof} It is a corollary of Lemmas \ref{1:7} and \ref{1:8}.
Let $s_1=s^{1-1/\alpha +\delta/\alpha}$, $\varphi \in C_c^\infty(-\frac 12, \frac12)$ and
$\varphi_{s_1}$ $\psi_l$  be as in Lemma \ref{1:8}. Then $\delta_0=
\varphi_{s_1}+\sum_{\twoline{l=1}{l-\text{dyadic}}}^{\frac12 s_1}\psi_l$. The conclusion of the
Lemma  follows directly from the formula:
\begin{equation}\label{oper_id}
\hams*\tilde{\mathbb T}_s=\varphi_{s_1}*\hams*\tilde{\mathbb T}_s+
\sum_{\twoline{M^\theta \le s'\le s}{{s'-\text{dyadic}}}}
\sum_{\twoline{l=1}{l-\text{dyadic}}}^{\frac12 s_1}\psi_l*\hams*(\tilde{\mathcal H}_{s'}+\tilde K_{s'}),
\end{equation}

Since the kernels $\mathbb T_s,\tilde{\mathbb T}_s$ are supported in $[-Cs,Cs]$ for some constant $C$, from
Lemma \ref{1:7} we conclude that
\begin{equation*}
\varphi_{s_1}*\hams*\tilde{\mathbb T}_s
\end{equation*}
satisfies \eqref{1:11} and \eqref{1:12}, that is the $(i)_{s_2}-(iv)_{s_2}$ of the definition
\ref{1} for some $s_2=Cs$ and with the constant $D_{s_2}\le C\|\tilde{\mathbb
T}_s\|_{\ell^2\to\ell^2} \le C\|\tilde{ T}\|_{\ell^2\to\ell^2}+C\epsilon(s)\|\tilde{ T}\|_{A_M}$.
Since $s^{\theta}\le s' \le s$, each of the kernels
\begin{equation}\label{range_s}
\psi_l*\hams*(\tilde{\mathcal H}_{s'}+\tilde K_{s'})
\end{equation}
by Lemma \ref{1:8} satisfies \eqref{1:11} and \eqref{1:12}, that is the  $(i)_{s_2}-(iv)_{s_2}$ of the definition \ref{1}  with $s_2=Cs$
 and $D_{s_2}\le Cs^{-\delta/8\alpha}\|T\|_{A_M}\le CM^{-\frac{\delta(\alpha-1+\delta)}{8\alpha}}\|T\|_{A_M}$.
Since the number of summands in \eqref{oper_id} is at most  $ C(\log M)^2$, the lemma follows.
\end{proof}
\begin{lemma}\label{1:10}
We have:
\begin{equation*}
\|T\tilde T\|_{A_M}\le C\,\big(\|T\|_{\ell^2\to\ell^2}\|\tilde T\|_{A_M}+\|T\|_{A_M}\|\tilde T\|_{\ell^2\to\ell^2}\big)+\epsilon_1(M)\|T\|_{A_M}\|\tilde T\|_{A_M},
\end{equation*}
where $\epsilon_1(M)\le CM^{-\frac{\delta(\alpha-1+\delta)}{16\alpha}}$,
and the constant $C$ does not depend on $M$.
\end{lemma}
\begin{proof}
We use the identity
\begin{align*}
T\,\tilde T&=
&\lambda\,\tilde T+\tilde\lambda\,T+\sum_s(K_s+\hams)*\tilde{\mathbb T}_s+\sum_s(\tilde K_s+\tilde\hams)*\mathbb T_{2s},
\end{align*}
($\mathbb T_s,\,\tilde{\mathbb T}_s$ as in the previous Lemma).
We apply Lemma \ref{1:9}, and obtain the estimates in the case $s\le M$. The case $s>M$ is immediate, since then $\hams$ vanish
and by $\ell^2$ boundedness of ${\mathbb T}_s,\tilde{\mathbb T}_s$, the kernels $K_s*\tilde{\mathbb T}_s$,  $\tilde K_s*{\mathbb T}_s$ satisfy conditions $(i)_{Cs}...(iv)_{Cs}$ of definition \ref{1}  with appropriate  norm controll.
\end{proof}

\section{Proof of Lemma \ref{war_reg}.}

In this section we  slightly abuse the notation and denote generic $s$ by $M$. We note that $H_M$,
introduced in \eqref{H_s_def} is supported in $[-CM^\alpha,CM^\alpha]$. Denote $G_M=H_M*H_M$. The
estimates  \eqref{G_reg} and \eqref{G_sup} on $G_M$ have been  proved in \cite{UZ}, the  estimate
\eqref{G_reg} under additional restriction  $M^{\frac{99}{100}}\le |x|,|x+u|$ and the estimate
\eqref{G_sup} for any $x\ne 0$. In what follows we will prove \eqref{G_reg} for the  remaining case
$M^{\alpha-1 +\delta_L }\le x, x+u\le M^{\frac{99}{100}}$. Then
 the new function   $\tilde G_M$  defined  on the whole
$\mathbb Z$ by $\tilde G_M(x)=G_M(x)$ for $|x|\ge M^{\alpha-1 +\delta_L }$ and
$\tilde G_M(x)=G_M([ M^{\alpha-1 +\delta_L }]) $
for
$|x|\le M^{\alpha-1 +\delta_L }$  satisfies \eqref{G_reg}.
Since $G_M(x)=G_M(-x)$, for $|x|\ge M^{\alpha-1 +\delta_L }$
we obviously have, for those $x$, $\tilde G_M(x)=G_M(x)$.
We will denote $\tilde G_M$ again by $G_M$ and define $E_M(x)$ by equation \eqref{conv_dec} with additional condition
$G_M(0)+E_M(0)=0$. Then
$E_M(x)$ obviously satisfies  \eqref{G_sup}.

We will apply the method of trigonometric polynomials and we
refer the reader to \cite{G} for all background facts.
We begin with some definitions used in the sequel.

\begin{definition*} Let $\delta>0$ be small, and $\delta_0=\frac{\delta}{100}$. We consider the partition of the interval $[0,1)$ into intervals of the form
\begin{equation*}
I_r=\Big[\frac{r}{M^{\delta_0}},\frac{r+1}{M^{\delta_0}}\Big)\subset[0,1),\qquad0\le r<M^{\delta_0}
\end{equation*}
For a number $\Delta\in [0,1)$ we will denote by $I(\Delta)$ the unique interval of the above form
such that $\Delta\in I(\Delta)$. We will write $I_r=[a(I_r),b(I_r))$ and denote by
$l(\Delta)=l(I(\Delta))=b(I(\Delta))-a(I(\Delta))$ the length of $I(\Delta)$.
\end{definition*}

Furthermore, we let $m(h,x,\Delta)$ be the unique, if it exists, non-negative solution of
\begin{equation}\label{2:8}
\big(m+h\big)^\alpha-m^\alpha =x+\Delta,
\end{equation}
where $x,h\in\mathbb N$ and $0\le\Delta<1$. Let
\begin{gather}\label{2:9}
H=\frac{x}{M^{\alpha-1}},\quad x\in\mathbb N,\quad M^{\alpha-1 +\delta_L }\le x\le
M^{\frac{99}{100}},\\
\|w\|=\inf_{k\in\mathbb Z}|k-w|,\quad w\in\mathbb R.
\end{gather}
We will consider the following condition for $(h,x,\Delta,k)$:
\begin{equation}\label{2:6}
\begin{aligned}
&\forall\  m,\quad m(h,x,a(I(\Delta)))\le m\le m(h,x,b(I(\Delta)))\ \Longrightarrow\ \\
&\qquad\Longrightarrow\ \big\|\alpha\cdot k\cdot m^{\alpha-1}\big\|\ge M^{-\delta_0/2}.
\end{aligned}
\end{equation}
\begin{lemma}\label{2:10}
If $\frac{M}{2}\le m \le 2M$ and satisfies \eqref{2:8}, and $H,\ x,\ h,\ \Delta$ as above then
\begin{equation}\label{Hrange}
 {C^{-1}}H\le h \le CH
\end{equation}
for some constant $C$ independent of $M,x,h,\Delta$. Moreover we have the following estimates:
\begin{align}
&m(h,x,b(I(\Delta)))-m(h,x,a(I(\Delta)))=c_\alpha\frac{l(I(\Delta))}{h}m(h,x,0)^{2-\alpha}\big(1+O(M^{-\delta_0})\big),\label{2:1}\\
&m(h,x,0)=\Big(\frac{x}{h\,\alpha}\Big)^\rho\big(1+O(M^{-\delta_0})\big),\label{2:2}\mbox{ where $\rho= \frac{1}{\alpha-1}$}\\
&S=\sum_{\twoline{H/C\le h\le CH}{I_r\subset[0,1)}}\varphi\Big(\frac{m(h,x,b_r)}{M}\Big)^2\,\frac{l(I_r)}{h}\,m(h,x,b_r)^{2-\alpha}=c_\alpha M^{2-\alpha} \big(1+O(M^{-\delta_0})\big),\label{2:3}
\end{align}
where the choice of $b_r\in I_r$ is arbitrary, and $\varphi\in C^\infty_c(\frac12,2)$.
\end{lemma}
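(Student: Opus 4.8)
Every assertion is elementary calculus applied to the defining relation \eqref{2:8}, the structural fact underlying all of it being that $h$ is negligible next to $m$. From $\tfrac M2\le m\le 2M$ and convexity of $t\mapsto t^\alpha$ one has $\alpha h(M/2)^{\alpha-1}\le(m+h)^\alpha-m^\alpha=x+\Delta\le\alpha h(4M)^{\alpha-1}$ provided $h\le 2M$, whereas $h>2M\ge m$ would force $(m+h)^\alpha-m^\alpha\ge h^\alpha>M^\alpha>2M^{99/100}\ge x+\Delta$, impossible for large $M$. Hence $h\le 2M$ and $x+\Delta\asymp hM^{\alpha-1}$, and since $0\le\Delta<1\le x$ this is \eqref{Hrange}; moreover $h\le CM^{99/100-(\alpha-1)}$, so $h/m\le CM^{-1/100}$. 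We shall require $\delta$ (hence $\delta_0=\delta/100$) small enough, depending only on $\alpha$ and $\delta_L$, that every error of the shape $M^{-\eta}$ with $\eta>0$ a fixed exponent depending on $\alpha$ and $\delta_L$ occurring below is $\le M^{-\delta_0}$; the conditions $\delta_0<\delta_L$ and $\delta_0<1/100$ will suffice.

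Differentiating \eqref{2:8} implicitly in $\Delta$, respectively in $h$, and using $(m+h)^\beta-m^\beta=\beta h m^{\beta-1}(1+O(h/m))$ for $\beta=\alpha$ and $\beta=\alpha-1$, we get
\begin{align*}
\partial_\Delta m&=\frac1{\alpha\bigl[(m+h)^{\alpha-1}-m^{\alpha-1}\bigr]}=\frac{m^{2-\alpha}}{\alpha(\alpha-1)h}\bigl(1+O(h/m)\bigr),\\
\partial_h m&=-\frac{(m+h)^{\alpha-1}}{(m+h)^{\alpha-1}-m^{\alpha-1}}=-\frac{m}{(\alpha-1)h}\bigl(1+O(h/m)\bigr).
\end{align*}
Solving \eqref{2:8} with $\Delta=0$ by the same expansion gives $m(h,x,0)=\bigl(x/(h\alpha)\bigr)^{1/(\alpha-1)}\bigl(1+O(h/m)\bigr)=\bigl(x/(h\alpha)\bigr)^{\rho}\bigl(1+O(M^{-\delta_0})\bigr)$, which is \eqref{2:2}. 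For \eqref{2:1}: over all of $[0,1)$ the increasing map $\Delta\mapsto m(h,x,\Delta)$ changes only by $\int_0^1\partial_\Delta m\,d\Delta=O(M^{2-\alpha}/h)=O(M^{2-\alpha-\delta_L})$, a relative change $O(M^{1-\alpha-\delta_L})=O(M^{-\delta_0})$ (using $\alpha-1+\delta_L>\delta_0$); integrating $\partial_\Delta m$ across $I(\Delta)$ and freezing $m$ at $m(h,x,0)$, at the cost of a factor $1+O(M^{-\delta_0})$, now gives \eqref{2:1} with $c_\alpha=\tfrac1{\alpha(\alpha-1)}$.

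For \eqref{2:3}, put $g(h,\Delta)=\varphi\bigl(m(h,x,\Delta)/M\bigr)^{2}\,m(h,x,\Delta)^{2-\alpha}/h$. Since $l(I_r)=M^{-\delta_0}$ is constant and $h$ runs over consecutive integers, the sum $S=\sum_{h,r}g(h,b_r)\,l(I_r)$ compares, by a two-step Riemann-sum estimate (first the $r$-sum against $\int_0^1 d\Delta$, then the $h$-sum against $\int dh$), with $I:=\iint g(h,\Delta)\,dh\,d\Delta$; by \eqref{Hrange} and $\textup{supp}\,\varphi\subset(\tfrac12,2)$ the integrand $g$ vanishes outside $[H/C,CH]\times[0,1)$, and the choice of $b_r\in I_r$ is immaterial. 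From the two derivative formulas, on the support $|\partial_h g|+|\partial_\Delta g|\le CM^{2-\alpha}/h^2$, so $|S-I|\le C\sum_{H/C\le h\le CH}M^{2-\alpha}/h^2\le CM^{2-\alpha}/H\le CM^{2-\alpha-\delta_L}$.

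It remains to evaluate $I$. The substitution $h\mapsto m=m(h,x,\Delta)$ at fixed $\Delta$ is a $C^1$, monotone change of variable with $|\partial_m h|=\frac{(m+h)^{\alpha-1}-m^{\alpha-1}}{(m+h)^{\alpha-1}}=\frac{(\alpha-1)h}{m}\bigl(1+O(h/m)\bigr)$, so $g\,|\partial_m h|=(\alpha-1)\varphi(m/M)^{2}m^{1-\alpha}\bigl(1+O(M^{-\delta_0})\bigr)$; moreover as $h$ runs over $[H/C,CH]$ the variable $m$ sweeps an interval which, again by \eqref{Hrange}, contains $\textup{supp}\,\varphi(\cdot/M)$. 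Hence
\[
I=(\alpha-1)\bigl(1+O(M^{-\delta_0})\bigr)\int_0^1\!\Bigl(\int\varphi(m/M)^{2}m^{1-\alpha}\,dm\Bigr)d\Delta=c_\alpha M^{2-\alpha}\bigl(1+O(M^{-\delta_0})\bigr),
\]
with $c_\alpha=(\alpha-1)\int_{1/2}^{2}\varphi(t)^{2}t^{1-\alpha}\,dt$. Since $\delta_0\le\delta_L$, combining with the previous paragraph gives $S=c_\alpha M^{2-\alpha}(1+O(M^{-\delta_0}))$, which is \eqref{2:3}. The one genuinely delicate point in all of this is the error bookkeeping of the last two paragraphs — arranging the nested Riemann-sum error and the change-of-variables error to be simultaneously of relative size $O(M^{-\delta_0})$, which is exactly what forces $\delta$ (and $\delta_0$) small in terms of $\alpha$ and $\delta_L$ — together with checking that the boundary subintervals $I_r$ and the extreme integers $h$ near $H/C$ and $CH$, where $\varphi(m(h,x,\cdot)/M)$ is supported on only part of $[0,1)$, contribute nothing beyond these errors.
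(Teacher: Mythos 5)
Your proof is correct and follows essentially the same route as the paper's: establish \eqref{Hrange} from $(m+h)^\alpha-m^\alpha\asymp hM^{\alpha-1}$ via Taylor expansion, compute $\partial_\Delta m$ and $\partial_h m$ by implicit differentiation, deduce \eqref{2:2} and \eqref{2:1} by freezing $m$ at $m(h,x,0)$ with relative errors controlled by $h/M$ and $1/x$ (both $\lesssim M^{-\delta_0}$), and obtain \eqref{2:3} by comparing the double sum to an integral and changing variables to $m$. The only cosmetic difference is in the last step: the paper first collapses the $r$-sum (since $\sum_r l(I_r)=1$), substitutes the explicit asymptotic $m(h,x,0)\approx(x/(\alpha h))^\rho$, and treats the remaining single $h$-sum as a Riemann sum, whereas you keep the full $(h,\Delta)$-double sum, compare it to the double integral $\iint g\,dh\,d\Delta$, and pass to the variable $m$ directly — equivalent bookkeeping of the same errors, and your version of the derivative bound $|\partial_h g|+|\partial_\Delta g|\le CM^{2-\alpha}/h^2$ and the resulting error $CM^{2-\alpha}/H$ is sound.
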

\begin{proof}
The estimate \eqref{Hrange} follows immediately from the Taylor's formula. In order to prove
\eqref{2:1} we use the mean value theorem and the definition of $m(h,x,t)$:
\begin{gather}
\label{dmest}
\frac{\partial m(h,x,t)}{\partial t}=
\frac{\partial m(h,x,t)}{\partial x}=\frac{m(h,x,t)^{2-\alpha}}{\alpha(\alpha-1)h}\Big(1+O\Big(\frac{h}{M}\Big)\Big)
= O\Big(\frac{M}{x}\Big),\\
\frac{m(h,x,t)^{2-\alpha}-m(h,x,0)^{2-\alpha}}{m(h,x,0)^{2-\alpha}}=\frac{(2-\alpha)m(h,x,t_1)^{1-\alpha}\frac{\partial m(h,x,t_1)}{\partial x}}{m(h,x,0)^{2-\alpha}}=O\Big(\frac{1}{x}\Big).
\end{gather}
Hence:
\begin{align*}
&m(h,x,b(I(\Delta)))-m(h,x,a(I(\Delta)))=l(\Delta)\cdot\frac{\partial m(h,x,t)}{\partial x}=\\
&=l(\Delta)\frac{m(h,x,t)^{2-\alpha}}{\alpha(\alpha-1)h}\Big(1+O\Big(\frac{h}{M}\Big)\Big)=
l(\Delta)\,\frac{m(h,x,0)^{2-\alpha}}{\alpha(\alpha-1)h}\Big(1+O\Big(\frac{h}{M}\Big)\Big)\Big(1+O\Big(\frac{1}{x}\Big)\Big).
\end{align*}
We now prove \eqref{2:2}. Let $x_1$ be such that
\begin{equation*}
m(h,x_1,0)=\Big(\frac{x}{h\alpha}\Big)^\rho.
\end{equation*}
that is
\begin{equation*}
x_1=\Big(\Big(\frac{x}{h\alpha}\Big)^\rho+h\Big)^\alpha-\Big(\frac{x}{h\alpha}\Big)^{\rho\alpha}.
\end{equation*}
Using the Taylor's formula applied to \eqref{2:8} we obtain $|x_1-x|\le x M^{-1/100}$. We have:
\begin{align*}
&\Big|\frac{m(h,x_1,0)-m(h,x,0)}{m(h,x_1,0)}\Big|\le \frac{C}{M}\,
\frac{\partial m(h,x_1,b){|x_1-x|}}{\partial x_1}\le\\
&\qquad\le \frac{C_1}{M}\,\frac{M}{x}\,|x-x_1|\le M^{-\frac{1}{100}}.
\end{align*}
We now prove the last part, \eqref{2:3}. Using the estimate  \eqref{dmest} it is straightforward to
check that
\begin{align*}
S&=\Big(\sum_{\twoline{H/C\le h\le CH}{I_r\subset[0,1)}}\varphi\Big(\frac{m(h,x,0)}{M}\Big)^2\,\frac{m(h,x,0)^{2-\alpha}}{h}l(I_r)\Big)\Big(1+O\big(M^{-(\alpha-1+\delta)}\big)\Big)\\
&=\Big(\sum_{H/C\le h\le CH}\varphi\Big(\frac{m(h,x,0)}{M}\Big)^2\,\frac{m(h,x,0)^{2-\alpha}}{h}\Big)\Big(1+O\big(M^{-(\alpha-1+\delta)}\big)\Big).
\end{align*}
We apply \eqref{2:2} and  replace $m(h,x,0)$ by $m(h,x_1,0)$ . W get
\begin{align*}
&=\Big(\sum_{H/C\le h\le CH}\varphi\Big(\frac{1}{M}\,\Big(\frac{x}{\alpha h}\Big)^\rho\Big)^2\,\Big(\frac{x}{\alpha h}\Big)^{\rho(2-\alpha)}\,\frac{1}{h}\Big)\Big(1+O\big(M^{-(\alpha-1+\delta)}\big)\Big)\\
&=\Big(\int_0^\infty\varphi\Big(\frac{1}{M}\,\Big(\frac{x}{\alpha h}\Big)^\rho\Big)^2\,\Big(\frac{x}{\alpha h}\Big)^{\rho(2-\alpha)}\,\frac{dh}{h}\Big)\Big(1+O\big(M^{-\delta}\big)\Big).
\end{align*}
The last equality follows from \eqref{Hrange}, and the fact, that by \eqref{Hrange}
\begin{equation*}
\varphi\Big(\frac{1}{M}\,\Big(\frac{x}{\alpha h}\Big)^\rho\Big)=0\qquad\text{for $h\le C^{-1}{H}$ or $h\ge CH$},
\end{equation*}
and the Taylor's formula. Now, by the change of variables, the last integral equals to $c_\alpha M^{2-\alpha}$ and \eqref{2:3} follows.
\end{proof}
\begin{lemma}
Let $M^{\alpha-1+\delta_L}\le x\le M^{\frac{99}{100}}$. We then have:
\begin{equation*}
M^2\,H_M*H_M(x)=\sum_{\twoline{H/C\le h\le CH}{I_r\subset[0,1)}}\varphi\Big(\frac{m(h,x,a(I_r))}{M}\Big)^2\Big(\big|\mathcal J^-_{h,x,I_r}\big|+\big|\mathcal J^+_{h,x-1,I_r}\big|\Big) + Er(x),
\end{equation*}
where $\mathcal J^-_{h,x,I_r}$, and $\mathcal J^+_{h,x,I_r}$ are sets satisfying the inclusions:
\begin{align*}
\mathcal J^-_{h,x,I_r}&\supset\{m\in[m(h,x,a(I_r)),m(h,x,b(I_r)):\ \{m^\alpha\}\ge 1-a(I_r)\},\\
\mathcal J^-_{h,x,I_r}&\subset\{m\in[m(h,x,a(I_r)),m(h,x,b(I_r)):\ \{m^\alpha\}\ge 1- b(I_r)\},\\
\mathcal J^+_{h,x,I_r}&\supset\{m\in[m(h,x,a(I_r)),m(h,x,b(I_r)):\ \{m^\alpha\}\le 1-b(I_r)\},\\
\mathcal J^+_{h,x,I_r}&\subset\{m\in[m(h,x,a(I_r)),m(h,x,b(I_r)):\ \{m^\alpha\}\le 1-a(I_r)\}.
\end{align*}
Moreover, for the error function $ Er(x)$ we have $| Er(x)|\le CM^{1-\alpha}M^{2-\alpha}$ so it satisfies
conditions \eqref{G_sup} and \eqref{G_reg} required for $G$.
\end{lemma}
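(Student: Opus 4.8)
The plan is to rewrite $H_M*H_M(x)$ as a weighted lattice--point count, and then to linearise that count on a partition fine enough that both the smooth weight and the relevant fractional--part threshold are essentially constant on each piece. First I would expand the convolution. Since $H_M$ is supported on $\{\pm[m^\alpha]:\varphi(m/M)\ne0\}$, with value $\sum_{m'}\varphi(m'/M)/m'$ at $[m^\alpha]$ (over the $m'$ with $[m'^\alpha]=[m^\alpha]$) and the opposite value at $-[m^\alpha]$, $M^2\,H_M*H_M(x)$ equals a fixed constant times
\[
\sum_{m,n}\frac{M^2\varphi(m/M)\varphi(n/M)}{mn}\sum_{\varepsilon_1,\varepsilon_2=\pm1}\varepsilon_1\varepsilon_2\,\mathbf 1_{\varepsilon_1[m^\alpha]+\varepsilon_2[n^\alpha]=x}.
\]
Because $M^{\alpha-1+\delta_L}\le x\le M^{99/100}<M$ while $[m^\alpha],[n^\alpha]\asymp M^\alpha$ (the cutoff $\varphi(\cdot/M)$ lives on $(M/2,2M)$), the diagonal terms $\varepsilon_1=\varepsilon_2$ vanish, and only $[m^\alpha]-[n^\alpha]=x$ with $m>n$ survives. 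Writing $n$ for the smaller index and $h=m-n$, Lemma \ref{2:10} (estimate \eqref{Hrange}) forces $H/C\le h\le CH$ with $H=x/M^{\alpha-1}$; note $h\le CxM^{1-\alpha}\le CM^{99/100-(\alpha-1)}=o(M)$. Thus $M^2\,H_M*H_M(x)$ is a constant times $\sum_{H/C\le h\le CH}\sum_{n:\varphi(n/M)\ne0}\frac{M^2\varphi(n/M)\varphi((n+h)/M)}{n(n+h)}\,\mathbf 1_{\lfloor(n+h)^\alpha\rfloor-\lfloor n^\alpha\rfloor=x}$.

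Next I would normalise and freeze the weight. Since $h=o(M)$, $\frac{M^2}{n(n+h)}=1+O(h/M)$ and $\varphi((n+h)/M)=\varphi(n/M)+O(h/M)$; and by \eqref{2:1} the block $n\in[m(h,x,a(I_r)),m(h,x,b(I_r)))$ has length of order $\frac{l(I_r)}{h}m(h,x,0)^{2-\alpha}$, over which $\varphi(n/M)$ varies by $O(M^{1-\alpha-\delta_0-\delta_L})$ (using $x\ge M^{\alpha-1+\delta_L}$). Replacing the weight by $\varphi(m(h,x,a(I_r))/M)^2$ on each block and throwing all discrepancies into $Er(x)$, it remains to count, for each $h$, the integers $n$ in each block with $\lfloor(n+h)^\alpha\rfloor-\lfloor n^\alpha\rfloor=x$. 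This is the crux. Fix $h$ and set $\theta_n=(n+h)^\alpha-n^\alpha$; only $\theta_n\in(x-1,x+1)$ can contribute. If $\theta_n\in[x,x+1)$, write $\theta_n=x+\Delta$, so $n=m(h,x,\Delta)$; since $\{(n+h)^\alpha\}=\{\{n^\alpha\}+\Delta\}$, the equation $\lfloor(n+h)^\alpha\rfloor-\lfloor n^\alpha\rfloor=x$ becomes a one-sided condition on $\{n^\alpha\}$ about the threshold $1-\Delta$ (the other side giving floor--difference $x+1$). If $\theta_n\in[x-1,x)$, write $\theta_n=(x-1)+\Delta$, so $n=m(h,x-1,\Delta)$, and again the equation becomes a one-sided condition about $1-\Delta$. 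Partitioning $\Delta$ by the intervals $I_r$ (equivalently $n$ over the associated $x$-- resp.\ $(x-1)$--block) and using $\Delta\in[a(I_r),b(I_r))$ to sandwich the threshold $1-\Delta$ between $1-a(I_r)$ and $1-b(I_r)$, the two sub-counts are precisely $|\mathcal J^{-}_{h,x,I_r}|$ and $|\mathcal J^{+}_{h,x-1,I_r}|$ of the statement --- the shift $x\mapsto x-1$ being exactly the regime $\theta_n\in[x-1,x)$. Taking $\mathcal J^{\pm}$ to be the exact sets (which do obey the asserted inclusions) makes this step error--free, so all of $Er(x)$ stems from the weight normalisation.

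It then remains to estimate $Er(x)$. By \eqref{2:3} the assembled main term has size of order $M^{2-\alpha}$, and $Er(x)$ is a sum, over the $\asymp xM^{1-\alpha}$ admissible $h$ and the $\asymp M^{\delta_0}$ blocks, of the weight--replacement discrepancies, each of relative size $O(h/M+M^{-\delta_0})$, together with a boundary contribution from the $n\in[2M-h,2M)$ with $\varphi((n+h)/M)=0\ne\varphi(n/M)$; the latter is of lower order because there the frozen weight $\varphi(m(h,x,a(I_r))/M)^2$ vanishes to first order in $(h/M)$ while, since $\partial\theta_n/\partial n\asymp(\alpha-1)hM^{\alpha-2}$, only $O(M^{2-\alpha}/h)$ such $n$ carry a prescribed floor--difference. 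Summing, $|Er(x)|\le CM^{1-\alpha}\cdot M^{2-\alpha}$, the factor $M^{1-\alpha}$ being the pooled relative error and $M^{2-\alpha}$ the size of the main term. Dividing by $M^2$ and using $\alpha>1$ gives $|Er(x)/M^2|\le CM^{1-2\alpha}\le CM^{-\alpha}$, which is \eqref{G_sup}; and since $M^{1-2\alpha}=M^{1-\alpha}M^{-\alpha}\le M^{-\alpha}$ for large $M$ while $|u\,M^{-\alpha}|$ is bounded on the support, \eqref{G_reg} for $Er$ follows from its size alone (if genuine H\"older control is wanted, from $\partial m(h,x,\cdot)/\partial x=O(M/x)$, cf.\ \eqref{dmest}).

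I expect the main obstacle to be exactly the accounting of $Er(x)$ near the boundary of $\textup{supp}\,\varphi$ together with the summation over $h$: this is what fixes the partition exponent $\delta_0$ and forces the simultaneous use of the length estimate \eqref{2:1}, the derivative bounds \eqref{dmest}, the count \eqref{2:3}, and the smooth vanishing of $\varphi$ at its endpoints. The hypothesis $x\le M^{99/100}$ enters precisely here, through $h\le CxM^{1-\alpha}=o(M)$.
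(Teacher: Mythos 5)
Your proposal follows the same route as the paper's: expand $M^2 H_M*H_M(x)$ as a weighted count of solutions to $[m^\alpha]-[n^\alpha]=x$, observe that the localisation $x\le M^{99/100}$ kills the $\varepsilon_1=\varepsilon_2$ terms and forces $h=m-n$ into the range $H/C\le h\le CH$ of \eqref{Hrange}, group by $h$ and by the interval $I_r$ containing the fractional part $\Delta$, turn the floor constraint into a one-sided condition on $\{n^\alpha\}$ about the threshold $1-\Delta$ (which gives the asserted inclusions), freeze the weight at $\varphi(m(h,x,a(I_r))/M)^2$ by Taylor expansion, and push everything else into $Er(x)$.

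There is, however, one genuine gap, and it occurs exactly where the paper invokes an external input. You estimate $|Er(x)|$ as (pooled relative error) $\times$ (size of the main term), and you justify the factor $M^{2-\alpha}$ ``by \eqref{2:3}.'' But \eqref{2:3} controls the weighted sum of block \emph{lengths} $\sum\varphi^2\,\tfrac{l(I_r)}{h}\,m(h,x,b_r)^{2-\alpha}$, not the quantity your relative-error argument is actually multiplied against, which is the total \emph{count} $\sum_{h,I_r}\bigl(|\mathcal J^-_{h,x,I_r}|+|\mathcal J^+_{h,x-1,I_r}|\bigr)$ of representations of $x$ as $[m^\alpha]-[n^\alpha]$ with $m\asymp M$. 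That this count is $\le CM^{2-\alpha}$ for every $x$ is a nontrivial arithmetic fact, and the paper cites it from \cite{UZ} precisely at this step; the paper's version of the error bound is simply $|Er|\le CM^{1-\alpha}\cdot\#\{\tfrac12 M\le m\le 2M:\exists\,k,\ x=[m^\alpha]-[k^\alpha]\}\le CM^{1-\alpha}M^{2-\alpha}$. Without the \cite{UZ} input your estimate does not close; invoking the later Corollary instead would be delicate, since its second assertion rests on the present lemma (though its first, which is what you would need, is independent of it and follows from Lemmas \ref{2:12} and \ref{2:11} alone). Two further small points: (a) the ``relative size $O(h/M+M^{-\delta_0})$'' you write does not follow from the computation you did just above it, which gives the correct $O(h/M+M^{1-\alpha-\delta_0-\delta_L})$; taken literally, $M^{-\delta_0}$ need not be $\le M^{1-\alpha}$ for small $\delta_L$, so this is a misstatement (harmless once corrected); (b) the boundary contribution near $n\approx 2M$ that you treat separately is absorbed automatically by the Taylor estimate, since $\tilde\varphi\in C^\infty_c$ gives $|\tilde\varphi(m_i/M)-\tilde\varphi(m(h,x,a(I_r))/M)|\le C|m_i-m(h,x,a(I_r))|/M$ uniformly, whether or not one of the two weights vanishes; the paper needs no separate boundary case.
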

\begin{proof} By the definition of $ H_M$,
we have:
\begin{align*}
&M^2\,H_M* H_M(x)=\\
&\qquad=\sum_{{m_1,m_2\in\mathbb Z}}\varphi\Big(\frac{m_1}{M}\Big)
\frac{M}{m_1}\varphi\Big(\frac{m_2}{M}\Big)\frac{M}{m_2}
\delta_{\pm [m_1^\alpha]}*\delta_{\pm [m_2^\alpha]}(x)\\
&\qquad=2\sum_{{m_1,m_2\in\mathbb Z}}\tilde\varphi\Big(\frac{m_1}{M}\Big)
\tilde\varphi\Big(\frac{m_2}{M}\Big)
\delta_{[m_1^\alpha] -[m_2^\alpha]}(x)=(\dagger)
\end{align*}
where we have denoted $\tilde\varphi(t)=\rm{sgn}\,(t)|t|^{-1}\varphi(t)$, and used the fact that for $m_1>m_2$ and
$0<x\le M^{\frac{99}{100}}$ the equation $\pm [m_1^\alpha]\pm [m_2^\alpha]=x$ can be solved only when
$ [m_1^\alpha]- [m_2^\alpha]=x$ .

We now fix $h>0$ and consider solutions to the equation:
\begin{equation*}
x=[m_1^\alpha]-[m_2^\alpha], \quad m_1-m_2=h, \quad \frac{M}{2}\le m_1\le2M.
\end{equation*}
Each solution is a pair $m_1,m_2$, but it is determined uniquely by its larger component $m_1$. In the following we refer to $m_1$ as ``the solution''. The set $\mathcal J^+_{h,x,I_r}$ consists of solutions with additional condition
\begin{equation*}
m_1^\alpha-m_2^\alpha=x+\Delta,\qquad \Delta\in I_r\subset[0,1).
\end{equation*}
The complementary set, $\mathcal J^-_{h,x,I_r}$ consists of solutions with additional condition
\begin{equation*}
m_1^\alpha-m_2^\alpha=x-1+\Delta,\qquad \Delta\in I_r\subset[0,1).
\end{equation*}
It is immediate, that if $\big[(m+h)^\alpha\big]-\big[m^\alpha\big]=x$ then
\begin{equation*}
(m+h)^\alpha-m^\alpha=x+\Delta,
\end{equation*}
or
\begin{equation*}
(m+h)^\alpha-m^\alpha=x-1+\Delta,
\end{equation*}
for some $\Delta\in[0,1)$.
Hence
\begin{equation*}
\Big\{\frac{1}{2}M\le m\le 2M: (\exists\,k)\,x=[m^\alpha]-[k^\alpha]\Big\}=\bigcup_{\twoline{H/C\le h\le CH}{I_r\subset[0,1)}}\mathcal J^+_{h,x,I_r}\cupdot\mathcal J^-_{h,x,I_r}.
\end{equation*}
Hence, we have
\begin{equation*}
(\dagger) =2\sum_{I_r\subset[0,1)}\sum_{H/C\le h\le CH}\sum_{{m_1\in\mathcal J^+_{h,x,I_r}\cupdot\mathcal J^-_{h,x,I_r}}}\tilde\varphi\Big(\frac{m_1}{M}\Big)
\tilde\varphi\Big(\frac{m_2}{M}\Big)
\delta_{[m_1^\alpha] -[m_2^\alpha]}(x)=(\ddag)
\end{equation*}
Since for ${m_1\in\mathcal J^+_{h,x,\Delta}\cupdot\mathcal J^-_{h,x,\Delta}}$ we have by
\eqref{2:1} $|m_1-m(h,x, a(\Delta))|\le CM^{2-\alpha},|m_2-m(h,x,a(\Delta))|\le
CM^{2-\alpha}+C|m_2-m_1|\le CM^{2-\alpha}+CH\le CM^{2-\alpha} $, applying Taylor formula for
$\varphi$ we get
\begin{equation*}
(\ddag)=2\sum_{I_r\subset[0,1)}\sum_{H/C\le h\le CH}\tilde\varphi\Big(\frac{m(h,x,a(I_r))}{M}\Big)^2\sum_{{m_1\in\mathcal J^+_{h,x,I_r}\cupdot\mathcal J^-_{h,x,I_r}}}1 +Er(x)
\end{equation*}
where the error term $Er(x)$ satisfies
\begin{equation}
|Er|\le CM^{1-\alpha}\#\Big\{\frac{1}{2}M\le m\le 2M: (\exists\,k)\,x=[m^\alpha]-[k^\alpha]\Big\}
\le CM^{1-\alpha}M^{2-\alpha}
\end{equation}
The last inequality,  by \cite{UZ} is true for every $x\in \mathbb Z$.
The first statement of Lemma follows.

If for some $\Delta\in I(\Delta)\subset[0,1)$ we have
\begin{equation*}
(m+h)^\alpha-m^\alpha=x+\Delta,\qquad x\in\mathbb N,
\end{equation*}
and
\begin{equation*}\{m^\alpha\}\le 1-b(I(\Delta)),
\end{equation*}
then
\begin{equation*}
\big[(m+h)^\alpha\big]-\big[ m^\alpha\big]=x.
\end{equation*}
So
\begin{equation*}
\{m^\alpha\}+\{(m+h)^\alpha-m^\alpha\}\le 1-b(I(\Delta))+\Delta,
\end{equation*}
and thus
\begin{equation*}
\{(m+h)^\alpha\}=\{m^\alpha\}+\{(m+h)^\alpha-m^\alpha\}=\{m^\alpha\}+\Delta.
\end{equation*}
So,
\begin{equation*}
\big[(m+h)^\alpha\big]-\big[m^\alpha\big]=x+\Delta -\big(\{(m+h)^\alpha\}-\{m^\alpha\}\big)=x.
\end{equation*}
Analogously:
\begin{gather*}
\{m^\alpha\}\ge 1-a(I(\Delta))\ \Rightarrow\ \{m^\alpha\}+\{(m+h)^\alpha-m^\alpha\}>1\ \Rightarrow\\
\Rightarrow\ \{(m+h)^\alpha\}=\{m^\alpha\}+\Delta-1,
\end{gather*}
and then
\begin{equation*}
\big[(m+h)^\alpha\big]-\big[m^\alpha\big]=x-1.
\end{equation*}
It follows, that
\begin{equation*}
\big[(m+h)^\alpha\big]-\big[m^\alpha\big]=x\ \Rightarrow\ \{m^\alpha\}\le 1-a(I(\Delta)).
\end{equation*}
The required inclusions now follow.
\end{proof}
Let us introduce the following 4 functions. Given an interval $I_r\subset[0,1)$ let
\begin{equation*}
\chi_1=\chi_{[1-a(I_r), 1-M^{-\delta_0}]},\quad \chi_2=\chi_{[1-b(I_r), 1]}.
\end{equation*}
Also, choose a function $\varphi$, smooth, even, positive, monotone on $\mathbb R^+$, with support contained in $[-M^{-\delta_0},M^{-\delta_0}]$, and with integral 1. Extend these three functions as $1$-periodic on $\mathbb R$ ($M^{-\delta_0}<<1$), and let
\begin{equation*}
\psi_{M,I_r}^{-,-}=\chi_1*\varphi,\quad\psi_{M,I_r}^{-,+}=\chi_2*\varphi,
\end{equation*}
where the convolutions are on the torus. Using Lemma \ref{2:11} we have the following obvious estimates:
\begin{align*}
&\sum_{m(h,x,a(I_r))\le m\le m(h,x,b(I_r))}\psi_{M,I_r}^{-,-}(m^\alpha)\le\big|\mathcal J_{h,x,I_r}^-\big|,\\
&\big|\mathcal J_{h,x,I_r}^-\big|\le\sum_{m(h,x,a(I_r))\le m\le m(h,x,b(I_r))}\psi_{M,I_r}^{-,+}(m^\alpha).
\end{align*}
We now choose new
\begin{equation*}
\chi_1=\chi_{[M^{-\delta_0},1-b(I_r)]},\quad \chi_2=\chi_{[0,1-a(I_r)]},
\end{equation*}
and let
\begin{equation*}
\psi_{M,I_r}^{+,-}=\chi_1*\varphi,\quad\psi_{M,I_r}^{+,+}=\chi_2*\varphi.
\end{equation*}
In this case, we have
\begin{align*}
&\sum_{m(h,x,a(I_r))\le m\le m(h,x,b(I_r))}\psi_{M,I_r}^{+,-}(m^\alpha)\le\big|\mathcal J_{h,x,I_r}^+\big|,\\
&\big|\mathcal J_{h,x,I_r}^+\big|\le\sum_{m(h,x,a(I_r))\le m\le m(h,x,b(I_r))}\psi_{M,I_r}^{+,+}(m^\alpha).
\end{align*}
It is straightforward to see, that if $\psi$ is any one of the above introduced functions we have the estimates:
\begin{gather}
\sum_{k\in\mathbb Z}\big|\hat\psi(k)\big|\le C\,\log M,\label{fourier_1}\\
\sum_{|k|>M^{2\delta_0}}\big|\hat\psi(k)\big|\le C\,M^{-\delta_0}\label{fourier_2}.
\end{gather}
\begin{lemma}\label{2:12}
We have an estimate
\begin{multline*}
\Big|\sum_{m(h,x,a(I_r)\le m\le m(h,x,b(I_r))}\psi(m^\alpha)-(m(h,x,b(I_r))-m(h,x,a(I_r)))\int_0^1\psi(t)\,dt\Big|\le\\
\le\sum_{0<|k|\le
M^{2\delta_0}}\big|\hat\psi(k)\big|\,\big|S_k(h,x,I_r)\big|+\frac{C}{M^{\delta_0/4}}|m(h,x,b(I_r))-m(h,x,a(I_r))|,
\end{multline*}
where $\psi$ is any of the functions $\psi_{M,I_r}^{\pm}$, and
\begin{equation}\label{2:4}
\big|S_k(h,x,I_r)\big|\le\frac{1}{M^{\delta_0/4}}|m(h,x,b(I_r))-m(h,x,a(I_r))|
\end{equation}
if $(h,x,\Delta,k)$ satisfies \eqref{2:6} and always
\begin{equation}\label{2:5}
\big|S_k(h,x,I_r)\big|\le C|m(h,x,b(I_r))-m(h,x,a(I_r))|
\end{equation}
\end{lemma}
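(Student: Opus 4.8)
The plan is to prove Lemma \ref{2:12} by Fourier-expanding $\psi$ on the torus and converting the sum over lattice points $m$ into the principal (main) term plus exponential sums, then splitting the frequency range at $|k|\sim M^{2\delta_0}$. Since $\psi$ is $1$-periodic we write $\psi(m^\alpha)=\sum_{k\in\mathbb Z}\hat\psi(k)\,e^{2\pi i k m^\alpha}$, so that
\begin{equation*}
\sum_{m(h,x,a(I_r))\le m\le m(h,x,b(I_r))}\psi(m^\alpha)=\sum_{k\in\mathbb Z}\hat\psi(k)\sum_{m(h,x,a(I_r))\le m\le m(h,x,b(I_r))}e^{2\pi i k m^\alpha}.
\end{equation*}
The $k=0$ term is exactly $\big(m(h,x,b(I_r))-m(h,x,a(I_r))+O(1)\big)\hat\psi(0)$, and $\hat\psi(0)=\int_0^1\psi(t)\,dt$; the $O(1)$ discrepancy (integer endpoints versus real ones) is absorbed into the final error term after noting that, by \eqref{2:1} in Lemma \ref{2:10}, the length $m(h,x,b(I_r))-m(h,x,a(I_r))$ is $\sim c_\alpha l(I_r)h^{-1}m(h,x,0)^{2-\alpha}\sim M^{2-\alpha}M^{-\delta_0}$, which is $\gg M^{\delta_0/4}$ in the regime $x\le M^{99/100}$, so a single unit is certainly controlled by $M^{-\delta_0/4}$ times the length. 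Next I would use \eqref{fourier_2} to discard all frequencies $|k|>M^{2\delta_0}$: the total mass of $\hat\psi$ there is $\le C M^{-\delta_0}$, and each exponential sum is trivially bounded in absolute value by the number of summands, i.e.\ by $C|m(h,x,b(I_r))-m(h,x,a(I_r))|$; multiplying gives a contribution $\le C M^{-\delta_0}|m(h,x,b(I_r))-m(h,x,a(I_r))|\le \tfrac{C}{M^{\delta_0/4}}|m(h,x,b(I_r))-m(h,x,a(I_r))|$, which is the second term on the right-hand side of the claimed inequality.

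It remains to define, for $0<|k|\le M^{2\delta_0}$,
\begin{equation*}
S_k(h,x,I_r)=\sum_{m(h,x,a(I_r))\le m\le m(h,x,b(I_r))}e^{2\pi i k m^\alpha},
\end{equation*}
so that the displayed inequality of the lemma is just the triangle inequality applied to the Fourier expansion, together with the two preceding observations. The bound \eqref{2:5} is then immediate: $|S_k(h,x,I_r)|$ is a sum of $|m(h,x,b(I_r))-m(h,x,a(I_r))|+O(1)$ unimodular terms, and the $O(1)$ is again absorbed using that the length is $\gg1$. The substantive content is \eqref{2:4}: if $(h,x,\Delta,k)$ satisfies the non-resonance condition \eqref{2:6}, namely $\|\alpha k m^{\alpha-1}\|\ge M^{-\delta_0/2}$ for all $m$ in the relevant short interval, then $S_k$ exhibits cancellation. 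Here I would use van der Corput's inequality (or summation by parts / the classical exponential-sum estimate from \cite{G}): on the interval in question the phase $f(m)=k m^\alpha$ has derivative $f'(m)=\alpha k m^{\alpha-1}$, whose fractional part is bounded below by $M^{-\delta_0/2}$ by \eqref{2:6}, while the second derivative $f''(m)=\alpha(\alpha-1)k m^{\alpha-2}$ is of size $\sim |k| M^{\alpha-2}$, which is tiny — so over a window of length $L=|m(h,x,b(I_r))-m(h,x,a(I_r))|\sim M^{2-\alpha-\delta_0}$ the derivative $f'$ varies by only $O(|k|M^{\alpha-2}\cdot L)=O(|k|M^{-2\delta_0})=O(1)\cdot M^{-\text{something small}}$, hence $f'$ stays a definite distance from the integers throughout. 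Summing a geometric-like series then yields $|S_k|\le C\|f'\|^{-1}\le C M^{\delta_0/2}$, and since $L\gg M^{3\delta_0/4}$ we get $|S_k|\le C M^{\delta_0/2}\le M^{-\delta_0/4}L$, which is \eqref{2:4}.

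The main obstacle is making the exponential-sum estimate for $S_k$ rigorous and extracting precisely the loss $M^{-\delta_0/4}$ claimed in \eqref{2:4} uniformly over the admissible ranges $H/C\le h\le CH$ and $M^{\alpha-1+\delta_L}\le x\le M^{99/100}$: one must check that the second-derivative term really is negligible on the short interval (so that the ``$\alpha k m^{\alpha-1}$ stays away from $\mathbb Z$'' hypothesis, which is stated only at the left endpoint implicitly via \eqref{2:6}, propagates across the whole interval), and that the interval length $m(h,x,b(I_r))-m(h,x,a(I_r))$, controlled by \eqref{2:1}–\eqref{2:2}, is comfortably larger than the resulting bound $M^{\delta_0/2}$ so that the ratio is $\le M^{-\delta_0/4}$. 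Both are genuine but routine calculus-of-Taylor-expansions estimates given Lemma \ref{2:10}; once they are in place the lemma follows by assembling the $k=0$ main term, the $0<|k|\le M^{2\delta_0}$ sum with the per-frequency bounds \eqref{2:4}/\eqref{2:5}, and the high-frequency tail controlled by \eqref{fourier_2}.
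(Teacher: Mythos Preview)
Your argument is correct and arrives at the same conclusion, but the route you take for the key bound \eqref{2:4} differs from the paper's. You invoke the Kuzmin--Landau first-derivative estimate: since \eqref{2:6} already asserts $\|\alpha k m^{\alpha-1}\|\ge M^{-\delta_0/2}$ for \emph{every} $m$ in the interval (so your worry about ``propagation from the left endpoint'' is unnecessary), and $f'(m)=\alpha k m^{\alpha-1}$ is monotone, Kuzmin--Landau gives the absolute bound $|S_k|\le C M^{\delta_0/2}$, which you then compare to the interval length $|\mathcal J|$. The paper instead applies van~der~Corput--Weyl differencing with shift length $D=M^{\delta_0}$: after the shift and a Taylor linearisation $(m+s)^\alpha-m^\alpha\approx \alpha s m^{\alpha-1}$, the inner sum over $s$ is a geometric progression bounded by $\min\{D, 2/\|\alpha k m^{\alpha-1}\|\}\le 2M^{\delta_0/2}$, and averaging over $m$ yields directly $|S_k|\le C M^{-\delta_0/2}|\mathcal J|$, a \emph{relative} gain that does not require a separate lower bound on $|\mathcal J|$.

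Both approaches are standard and valid here. Two small corrections to your write-up: first, the length $L=|\mathcal J|$ is not $\sim M^{2-\alpha-\delta_0}$ uniformly; by \eqref{2:1} and $h\sim H=x/M^{\alpha-1}$ one has $L\sim M^{1-\delta_0}/x$, so in the worst case $x\sim M^{99/100}$ one only gets $L\gtrsim M^{1/100-\delta_0}$. This is still comfortably $\gg M^{3\delta_0/4}$ for $\delta_0$ small, so your comparison $CM^{\delta_0/2}\le M^{-\delta_0/4}L$ survives. Second, the paper's differencing argument also implicitly uses a lower bound on $|\mathcal J|$ (to control the boundary term $D/|\mathcal J|$), so in the end both proofs rely on the same size estimate for the interval coming from Lemma~\ref{2:10}.
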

\begin{proof}
Let us denote
\begin{equation}\label{Jdef}
\mathcal J=\{m(h,x,a(I_r))\le m\le m(h,x,b(I_r))\}.
\end{equation}
We have
\begin{align*}
&\Big|\sum_{m\in\mathcal J}\psi(m^\alpha)-\sum_{m\in\mathcal J}\hat\psi(0)\Big|\le\\
&\qquad\qquad\le
\sum_{0<|k|\le M^{2\delta_0}}\big|\hat\psi(k)\big|\Big|\sum_{m\in \mathcal J}\,e^{2\pi\,i\,m^\alpha\cdot k}\Big|+|\mathcal J|\cdot\sum_{|k|>M^{2\delta_0}}\big|\hat\psi_{M,I_r}(k)\big|\\
&\qquad\qquad=I+II.
\end{align*}
It follows from \eqref{fourier_2} that $II\le|\mathcal J|\,M^{-\delta_0}$. We will estimate $I$. We have, as in the proof of Van der Corput's difference lemma, \cite{G}:
\begin{align*}
\Big|\sum_{m\in\mathcal J}e^{2\pi\,i\,m^\alpha k}\Big|&\le\frac{1}{D}\sum_{m\in\mathcal J}
\Big|\sum_{s=0}^{D-1}e^{2\pi\,i\,((m+s)^\alpha-m^\alpha)\cdot k}\Big|+C\cdot D\\
&\le\frac{1}{D}\sum_{m\in\mathcal J}\Big|\sum_{s=0}^{D-1}e^{2\pi\,i\,ks\alpha m^{\alpha-1}}\Big|+C|\mathcal J|\Big(\cdot\frac{D^2 M^{2\delta_0}}{M^{2-\alpha}}+\frac{D}{|\mathcal J|}\Big),
\end{align*}
with the second term of the last expression estimated by
$|\mathcal J|(\frac{M^{4\delta_0}}{M^{2-\alpha}}+M^{\delta_0-\frac{1}{100}})\le|\mathcal J|M^{-\delta_0}$
 if we have $D=M^{\delta_0}$.
We have used in the above the the following obvious consequence of the Taylor's formula
\begin{equation*}
e^{2\pi i ((m+s)^\alpha-m^\alpha)}=e^{2\pi i \alpha s\,m^{\alpha-1}}+O\Big(\frac{s^2\, k}{m^{2-\alpha}}\Big).
\end{equation*}
We continue the original estimate:
\begin{equation*}
\le\frac{1}{D}\sum_{m\in\mathcal J}\min\Big\{D,\frac{2}{\|\alpha k m^{\alpha-1}\|}\Big\}+\frac{C|\mathcal J|}{M^{\delta_0}}.
\end{equation*}
Now, if $(h,x,\Delta,k)$ satisfies the \eqref{2:6} condition, then
\begin{equation*}
\frac{1}{D}\sum_{m\in\mathcal J}\min\Big\{D,\frac{2}{\|\alpha k m^{\alpha-1}\|}\Big\}\le M^{-\delta_0/2}|\mathcal J|.
\end{equation*}

\end{proof}
\begin{lemma}\label{2:11}
Assume $|k|\le M^{2\delta_0}$.
We have the estimates
\begin{align*}
&\sum_{1/C\,H\le h\le C\,H}\big|S_k(h,x,I_r)\big|\le\\
&\qquad\le\frac{C\,H}{M^{\delta_0/4}}\,|m(h,x,b(I_r))-m(h,x,a(I_r))|\\
&\qquad\le C\,l(I_r)\,M^{2-\alpha-\delta_0/4}.
\end{align*}
\end{lemma}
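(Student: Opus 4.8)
The plan is to split the sum over $h$ according to whether the Diophantine condition \eqref{2:6} holds for the quadruple $(h,x,\Delta,k)$ --- a condition depending on $\Delta$ only through $I_r=I(\Delta)$ --- and to feed the two cases into the estimates \eqref{2:4}, \eqref{2:5} of Lemma \ref{2:12}. Call $h$ \emph{bad} if \eqref{2:6} fails for $(h,x,I_r,k)$. In the range $1/C\le h/H\le C$ one has $m(h,x,0)\asymp M$ by \eqref{2:2} and \eqref{Hrange}, so by \eqref{2:1} the quantity $|m(h,x,b(I_r))-m(h,x,a(I_r))|$ is $\asymp l(I_r)H^{-1}M^{2-\alpha}$ uniformly in such $h$; in particular the last estimate in the statement follows from the middle one, so it suffices to prove $\sum_h|S_k(h,x,I_r)|\le CHM^{-\delta_0/4}\,|m(h,x,b(I_r))-m(h,x,a(I_r))|$. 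For every non-bad $h$, \eqref{2:4} gives $|S_k(h,x,I_r)|\le M^{-\delta_0/4}|m(h,x,b(I_r))-m(h,x,a(I_r))|$, and there are at most $CH$ values of $h$ in the range, so these $h$ contribute at most $CHM^{-\delta_0/4}|m(h,x,b(I_r))-m(h,x,a(I_r))|$. For bad $h$ I would use only the trivial bound \eqref{2:5}, so the whole lemma reduces to the claim
\begin{equation}\label{badcount}
\#\{\,h:\ 1/C\le h/H\le C,\ h\ \text{bad}\,\}\le CHM^{-\delta_0/4}.
\end{equation}

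To prove \eqref{badcount} I would first pass to a one-variable small-fractional-part count. On the interval $\mathcal J=[m(h,x,a(I_r)),m(h,x,b(I_r))]$, which has length $\asymp l(I_r)H^{-1}M^{2-\alpha}=H^{-1}M^{2-\alpha-\delta_0}$, the function $m\mapsto \alpha k m^{\alpha-1}$ varies by at most $C|k|M^{\alpha-2}|\mathcal J|\le C|k|H^{-1}M^{-\delta_0}$, which is $\ll M^{-\delta_0/2}$ since $|k|\le M^{2\delta_0}$, $h\asymp H\ge M^{\delta_L}$ (the latter because $x\ge M^{\alpha-1+\delta_L}$) and $\delta_0$ is small. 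Hence if $h$ is bad, then $\|g(h)\|<2M^{-\delta_0/2}$ where $g(h)=\alpha k\,m(h,x,a(I_r))^{\alpha-1}$. Differentiating the defining relation \eqref{2:8} as in \eqref{dmest} one sees that $g$ is monotone on the range, with $g(h)=\tfrac{kx}{h}\bigl(1+O(M^{-\delta_0})\bigr)$, $g'(h)\asymp |k|M^{\alpha-1}H^{-1}$ and $g''(h)\asymp |k|M^{\alpha-1}H^{-2}$; in particular $g$ sweeps an interval of length $\asymp |k|M^{\alpha-1}$, which exceeds $M^{\delta_0/4}$ for $M$ large, $\alpha>1$ being fixed and $\delta_0$ small.

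It then remains to bound $N:=\#\{\,h:\ 1/C\le h/H\le C,\ \|g(h)\|<2M^{-\delta_0/2}\,\}$. I would majorize the $1$-periodic indicator of $\{t:\ \|t\|<2M^{-\delta_0/2}\}$ by a Beurling--Selberg trigonometric polynomial $\sum_{|j|\le J}c_j\,e^{2\pi i j t}$ of degree $J\asymp M^{\delta_0/2}$, with $c_0\asymp M^{-\delta_0/2}$ and $|c_j|\le c_0$ for every $j$, so that
\begin{equation*}
N\ \le\ c_0\cdot\#\{\,h:\ 1/C\le h/H\le C\,\}\ +\ \sum_{0<|j|\le J}|c_j|\;\Bigl|\sum_{1/C\le h/H\le C}e^{2\pi i j g(h)}\Bigr|.
\end{equation*}
The zero-frequency term is $\asymp HM^{-\delta_0/2}\le HM^{-\delta_0/4}$. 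For each $j\ne0$ I would estimate the oscillatory sum by van der Corput's second-derivative test (see \cite{G}) via $|jg''(h)|\asymp |jk|M^{\alpha-1}H^{-2}$, getting $\bigl|\sum_h e^{2\pi i j g(h)}\bigr|\le C\bigl((|jk|M^{\alpha-1})^{1/2}+H(|jk|M^{\alpha-1})^{-1/2}\bigr)$; summing this against $|c_j|\le c_0\asymp M^{-\delta_0/2}$ over $0<|j|\le J\asymp M^{\delta_0/2}$, and using $|k|\le M^{2\delta_0}$, $h\asymp H\ge M^{\delta_L}$, $|k|M^{\alpha-1}\ge M^{\delta_0/4}$ and $\delta_0$ small, the non-zero frequencies also contribute $\le CHM^{-\delta_0/4}$. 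This gives \eqref{badcount}, and with it the lemma. The one genuine difficulty is the Diophantine estimate \eqref{badcount}: it says that the fractional parts of $g(h)$ --- morally $kx/h$ --- on the dyadic block $h\asymp H$ land within $M^{-\delta_0/2}$ of an integer only for a negligible proportion of $h$, and it is precisely at this point that the hypotheses $|k|\le M^{2\delta_0}$ and $x\ge M^{\alpha-1+\delta_L}$ are used to make the exponents balance; the rest is routine bookkeeping with Lemmas \ref{2:10} and \ref{2:12}.
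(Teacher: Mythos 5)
Your reduction of the lemma to the counting claim
\begin{equation*}
\#\{\,h:\ 1/C\le h/H\le C,\ h\ \text{bad}\,\}\le C\,H\,M^{-\delta_0/4}
\end{equation*}
is exactly what the paper does: split into $h$ satisfying \eqref{2:6} (use \eqref{2:4}) and $h$ violating it (use \eqref{2:5}), and observe via \eqref{2:1}, \eqref{2:2} that the right-hand side of the middle line is $\asymp l(I_r)M^{2-\alpha-\delta_0/4}$. Up to this point the two proofs coincide.

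Where you diverge is in proving the counting claim, and this is where your argument has a genuine gap. You majorize the indicator of $\{\|t\|<2M^{-\delta_0/2}\}$ by a Beurling--Selberg polynomial of degree $J\asymp M^{\delta_0/2}$ and bound $\sum_{h\asymp H}e^{2\pi i j g(h)}$ by van der Corput's second-derivative test, getting $(|jk|M^{\alpha-1})^{1/2}+H(|jk|M^{\alpha-1})^{-1/2}$. The first of these terms is the problem. Summed over $0<|j|\le J$ and multiplied by $c_0\asymp M^{-\delta_0/2}$ it contributes $\asymp M^{-\delta_0/2}J^{3/2}(|k|M^{\alpha-1})^{1/2}\asymp M^{\delta_0/4}(|k|M^{\alpha-1})^{1/2}$. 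For this to be $\le CHM^{-\delta_0/4}$ one needs $H\gtrsim(|k|M^{\alpha-1})^{1/2}M^{\delta_0/2}\ge M^{(\alpha-1)/2+\delta_0/2}$. But $H=x/M^{\alpha-1}$ ranges down to $M^{\delta_L}$, and the lemma is stated (and needed) for arbitrary $\delta_L>0$; in the applications $\delta_L=\delta/2$ with $\delta=\theta-(\alpha-1)$ arbitrarily small. So for $\delta_L<(\alpha-1)/2$ the van der Corput bound is weaker than the trivial bound $H$ and the argument collapses. This is not a technical blemish you can repair by rebalancing exponents: when $H$ is this small the phase $g(h)\approx kx/h$ is not small in second derivative relative to the length of the range, and no $k$-th derivative test or exponent-pair improvement will save the day on a block of length $H$. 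Worse, the regime $M^{\alpha-1+\delta_L}\le x\le M^{99/100}$ with $\delta_L$ small is precisely the new case Section 4 is written to handle (the case $x\ge M^{99/100}$ being already treated in \cite{UZ}), so your argument breaks down exactly on the range that matters.

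The paper sidesteps the analytic obstacle by an arithmetic one-liner. A bad $h$ forces $\|kx/h\|\le 2M^{-\delta_0/2}$, hence $kx=hw+e h$ with $e h\in\mathbb Z$ and $|e h|\le 2HM^{-\delta_0/2}$. If there were $\ge HM^{-\delta_0/4}$ bad $h$'s, a pigeonhole in the value $z=eh$ would produce a single nonzero integer $kx-z$, of size $\le M^{1+2\delta_0}$, having $\ge\frac12 M^{\delta_0/4}$ divisors, contradicting $d(n)=n^{o(1)}$. This divisor count is completely insensitive to the size of $H$, which is why it covers the whole range $H\ge M^{\delta_L}$. Morally, the cluster of bad $h$'s is governed not by oscillation of an exponential sum but by the arithmetic structure of the rational approximations to $kx/h$, and only the divisor argument sees this.
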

\begin{proof}
The last inequality is an obvious consequence of \eqref{2:1}. Based on \eqref{2:4} and \eqref{2:5}
it is enough to prove the estimate
\begin{equation*}
\#\{h:\ (h,x,\Delta,k)\text{ does not satisfy \eqref{2:6}}\}\le CH\,M^{-\delta_0/4}.
\end{equation*}
To do so, let us momentarily fix $h,x,\Delta,k$ which do not satisfy \eqref{2:6}, and thus there
exists $m\in \mathcal J$ such, that
\begin{equation*}
\big\|\alpha\,k\,m^{\alpha-1}\big\|<M^{-\frac{\delta_0}{2}}.
\end{equation*}
Let $|k|\le M^{2\delta_0}$. We will show the estimate
\begin{equation*}
\alpha\,k\,m^{\alpha-1}=\frac{kx}{h}+O\big(M^{-\frac{\delta}{2}}\big),
\end{equation*}
Since $m\in J$, it satisfies the equation
\begin{equation*}
(m+h)^\alpha-m^\alpha=x+\Delta,\qquad a(I(\Delta))\le\Delta<b(I(\Delta)),
\end{equation*}
and by the mean-value theorem
\begin{gather*}
\alpha\,h\,m^{\alpha-1}=x+\Delta+O\Big(\frac{h^2\,M^\alpha}{M^2}\Big),\\
\end{gather*}
By \eqref{2:9} we have $M^\delta\le H \le M^{99/100}$
and consequently since $|k|\le M^{2\delta_0}$ and  $2\delta_0<\delta/2$
\begin{gather*}
\alpha\,k\,m^{\alpha-1}=\frac{k\,x}{h}+O\big(M^{-\delta/2}\big)
\end{gather*}
We have
\begin{gather*}
\Big\|\frac{k\,x}{h}\Big\|\le\big\|\alpha\,k\,m^{\alpha-1}\big\|+{M^{-\delta/2}} \le 2M^{-\delta_0/2},\\
\end{gather*}
Now, let $w\in\mathbb N$ be the integer approximation of $\frac{kx}{h}$, thus
\begin{equation*}
\frac{kx}{h}=w+e,\qquad|e|\le 2\,M^{-\delta_0/2}.
\end{equation*}
We now assume that we have at least  $H\,M^{-\delta_0/4}$ different  $h_i$'s, with false \eqref{2:6}. Thus, each of these $h_i$'s satisfies
\begin{equation}\label{2:7}
k\,x=h_i\,w_i+e_i\,h_i,
\end{equation}
and since $kx$ and $h_iw_i$ are integers, so are $e_ih_i$, and
\begin{equation*}
|e_i\,h_i|\le2 H\,M^{-\delta_0/2}.
\end{equation*}
Now, for given number $z$ with $|z|\le 2 HM^{-\delta_0/2}$ we consider the set
\begin{equation*}
\mathcal A_z=\{h_i: kx=h_iw_i+z\}.
\end{equation*}
If for each $z$ the number of elements of $\mathcal A_z$ is $<\frac{1}{2}M^{\delta_0/4}$, that the total number of $h_i$'s satisfying \eqref{2:7}
would be $<\frac{1}{2}M^{\delta_0/4}\cdot2HM^{-\delta_0/2}=HM^{-\delta_0/4}$, which is a contradiction. Thus, there must be a $z$, for which
\begin{equation}\label{divlarge}
\#\{h_i:kx=h_iw_i+z\}\ge \frac{1}{2}\,M^{\delta_0/4}.
\end{equation}
Now, since $|z|\le \frac{Cx}{M^{\alpha-1}}$, $k\ne 0$ we have   $0\neq |kx-z|\le M^{\delta_0/2+1}$ and by \eqref{divlarge}
$kx-z$ has at least $M^{\delta_0/4}$ divisors, which is impossible by a well known estimate on the number of divisors.
\end{proof}
\begin{corollary}
We have
\begin{equation*}
\sum_{I_r}\sum_{h\sim H}\Big|\Big|\mathcal J^+_{h,x,I_r}\Big|+\Big|\mathcal J^-_{h,x,I_r}\Big|-(m(h,x,b(I_r))-m(h,x,a(I_r)))\Big|
\le C\, M^{2-\alpha-\delta_0/4}.
\end{equation*}
\begin{equation*}
\sum_{I_r}\sum_{h\sim H}\varphi\Big(\frac{m(h,x,a(I_r))}{M}\Big)^{2}
\Big(\Big|\mathcal J^+_{h,x,I_r}\Big|+\Big|\mathcal J^-_{h,x,I_r}\Big|\Big)=S +O\Big( M^{2-\alpha-\delta_0/4}\Big)
\end{equation*}
where $S$ is defined by \eqref{2:3}.
\end{corollary}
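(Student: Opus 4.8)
The plan is to combine the three previous lemmas (Lemma~\ref{2:12}, Lemma~\ref{2:11} and the preceding Lemma giving the $\mathcal J^\pm$ inclusions) and simply sum the error terms over the ranges of $h$ and $I_r$. The first displayed inequality is the cleaner of the two, so I would start there. Fix $x$ with $M^{\alpha-1+\delta_L}\le x\le M^{\frac{99}{100}}$, hence $H=x/M^{\alpha-1}$ with $M^{\delta_L}\le H\le M^{99/100}$. For each admissible $h\sim H$ and each interval $I_r\subset[0,1)$, the four functions $\psi^{\pm,\pm}_{M,I_r}$ sandwich $|\mathcal J^-_{h,x,I_r}|$ and $|\mathcal J^+_{h,x,I_r}|$ from above and below, while $\int_0^1\psi^{-,+}_{M,I_r}+\int_0^1\psi^{+,-}_{M,I_r}=1-O(M^{-\delta_0})$ and likewise for the other pairing (the measures of the two complementary arcs add up to $1$ up to the smoothing error $M^{-\delta_0}$). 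Therefore
\begin{equation*}
\Big|\,|\mathcal J^+_{h,x,I_r}|+|\mathcal J^-_{h,x,I_r}|-\big(m(h,x,b(I_r))-m(h,x,a(I_r))\big)\Big|
\end{equation*}
is bounded, via Lemma~\ref{2:12}, by
\begin{equation*}
\sum_{0<|k|\le M^{2\delta_0}}|\hat\psi(k)|\,|S_k(h,x,I_r)|+\frac{C}{M^{\delta_0/4}}\big(m(h,x,b(I_r))-m(h,x,a(I_r))\big),
\end{equation*}
summed over the four choices of $\psi$.

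Next I would sum this over $h\sim H$ and over the $I_r$. For the second (main-error) term, summing the increments $m(h,x,b(I_r))-m(h,x,a(I_r))$ over $I_r\subset[0,1)$ telescopes to $m(h,x,1)-m(h,x,0)=O(M^{2-\alpha}/h)$ by \eqref{2:1}, and summing that over $h\sim H$ gives $O(M^{2-\alpha})$; multiplied by $M^{-\delta_0/4}$ this contributes $O(M^{2-\alpha-\delta_0/4})$, as required. For the oscillatory term I would exchange the order of summation, putting the $k$-sum outside: for each fixed $k$ with $0<|k|\le M^{2\delta_0}$, Lemma~\ref{2:11} bounds $\sum_{h\sim H}|S_k(h,x,I_r)|$ by $C\,l(I_r)M^{2-\alpha-\delta_0/4}$, so summing over $I_r$ (using $\sum_r l(I_r)=1$) gives $C\,M^{2-\alpha-\delta_0/4}$ uniformly in $k$; then $\sum_{0<|k|\le M^{2\delta_0}}|\hat\psi(k)|\le C\log M$ by \eqref{fourier_1} absorbs the $k$-sum at the cost of a harmless logarithm, which is swallowed by the power saving $M^{-\delta_0/4}$ (after, say, replacing $\delta_0/4$ by $\delta_0/5$). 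This yields the first displayed estimate.

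For the second displayed estimate I would insert the weight $\varphi(m(h,x,a(I_r))/M)^2\le C$, which is bounded uniformly, so multiplying the already-established bound by this weight costs nothing: the left-hand side equals
\begin{equation*}
\sum_{I_r}\sum_{h\sim H}\varphi\Big(\tfrac{m(h,x,a(I_r))}{M}\Big)^2\big(m(h,x,b(I_r))-m(h,x,a(I_r))\big)+O\big(M^{2-\alpha-\delta_0/4}\big).
\end{equation*}
By \eqref{2:1}, the main sum here is $\sum_{I_r}\sum_{h\sim H}\varphi(m(h,x,a(I_r))/M)^2\,c_\alpha\frac{l(I_r)}{h}m(h,x,0)^{2-\alpha}(1+O(M^{-\delta_0}))$, and replacing the weight evaluated at $a(I_r)$ by the weight and $m$-power evaluated at an arbitrary point $b_r\in I_r$ costs only $O(M^{-\delta_0})$ relative error by the derivative bound \eqref{dmest}; this is precisely the quantity $S$ of \eqref{2:3}, whose value Lemma~\ref{2:10} identifies as $c_\alpha M^{2-\alpha}(1+O(M^{-\delta_0}))$. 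The $O(M^{-\delta_0})$ relative error times $M^{2-\alpha}$ is $O(M^{2-\alpha-\delta_0})$, which is absorbed into $O(M^{2-\alpha-\delta_0/4})$, completing the proof.

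The main obstacle is bookkeeping rather than depth: one must be careful that the telescoping of the arc-length increments over $I_r$ and the summation over $h\sim H$ genuinely produce only $O(M^{2-\alpha})$ before the $M^{-\delta_0/4}$ gain, and that the $\log M$ from \eqref{fourier_1} and the $O(M^{-\delta_0})$ relative errors from \eqref{2:1}--\eqref{dmest} are all strictly dominated by the $M^{-\delta_0/4}$ saving; shrinking the exponent slightly (e.g. to $\delta_0/5$) makes every such absorption automatic.
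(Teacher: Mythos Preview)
Your proposal is correct and follows essentially the same route as the paper's proof, which simply states that the first formula is an immediate consequence of Lemmas~\ref{2:12} and~\ref{2:11}, and that the second follows from \eqref{2:1} together with the first part. You have supplied the bookkeeping that the paper omits (the sandwich via $\psi^{\pm,\pm}$, the telescoping over $I_r$, the exchange of the $k$-sum with the $h$-sum, and the passage from the first formula to the second via boundedness of the weight and \eqref{2:1}); your observation about the stray $\log M$ from \eqref{fourier_1} and the need to shrink the exponent slightly is a genuine detail the paper suppresses.
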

\begin{proof} The first formula  is an immediate consequence of Lemmas \eqref{2:11} and \eqref{2:12}. For the second formula we apply \eqref{2:1} and the first part.
\end{proof}

\section{A counterexample}

In this section we prove the theorem \ref{4_ex}. Fix $1<\alpha< 1+\frac1{1000}$, $0<\delta \le
\frac{(\alpha-1)^2}{\alpha}$ and $\kappa =c\delta$, where $c$ will be specified later. Let
$\{M_l\}_l$ be sequence of integers satisfying $10M_l\le  M_{l+1}^{\alpha-1-1.1\delta}$, $\varphi
\in C_c^\infty(1,2)$ real valued. We put $\varphi_s=\varphi$ if for some $l$ we have (recall $s$ is
dyadic ) $s^{}\in U_-=[ M_l^{\alpha-1-1.1\delta},M_l^{\alpha-1-\delta}]$ or $s^{}\in U_+=[
M_l^{1-0.1\kappa}, M_l^{}]$ and $\varphi_s=0$ otherwise. We will consider Hilbert transform
$\hama=\sum_{\twoline{M^{\alpha-1-1.1\delta}\le s \le M}{s -\text{dyadic}}}H_{s}$ (we use more
convenient $H_s$ instead of $\hams$ ) corresponding to this  sequence $\{\varphi_s\}$ and
$\theta=\alpha-1-1.1\delta$.

Fix $l$ and denote $M=M_l$. By \eqref{hilb_def1}, $\hama$  contains two large blocks ${\mathbb
H}_+,{\mathbb H}_-$ corresponding to summation indices in
 $ U_+, U_-$
respectively. For  $P=M^{\alpha(\alpha-1-\delta)}$ and an integer $j$ satisfying, for $C$
sufficiently large, $\frac1C M^{\alpha(2+0.9\delta-\alpha)} \le j\le CM^{\alpha(2+\delta-\alpha)}$,
let $I_j=[(j-1)P,(j+1)P]$. Consider $A_j$, the set of $n\in U_-$ such that for some $x\in I_j$ the
equation
\begin{equation}\label{11}
[m^\alpha]\pm[n^\alpha]=x
\end{equation}
has  more than 1 solution (a pair $m,n$, with $m\in U_+$ and  $n\in U_-$), we allow the different
choice of $\pm$ signs for different solutions. Let $m_1$ and $m_2$ satisfy \eqref{11} possibly with
different $x_1,x_2\in I_j$ and $n_1,n_2\in U_-$. We define
 $h=m_1-m_2$  and estimate using $m_1,m_2\in U_+$ and the Taylor's formula
\begin{equation*}
|m_1^\alpha-m_2^\alpha|\le P\ \Rightarrow\ h M^{(1-0.1\delta)(\alpha-1)}\le CM^{\alpha(\alpha-1-\delta)}
\end{equation*}
Let  $H=\frac{CM^{\alpha(\alpha-1-\delta)}}{M^{(1-0.1\delta)(\alpha-1)}}$, hence $|h|\le H$, that
is $m_1,m_2$ are contained in the interval of length $H$ containing some $m_0$ satisfying
\eqref{11}.
 If $n_1\in A_j$ then for some $n_2\neq n_1$ we have two pairs $m_1,n_1$ and $m_2,n_2$ satisfying \eqref{11}.
 In what follows we assume that the $\pm$ signs corresponding to both pairs are minus.
 By \eqref{11} we obtain
\begin{equation}\label{non_uni}
[n_1^\alpha]-[n_2^\alpha]=[m_1^\alpha]-[m_2^\alpha]=[m_1^\alpha-m_2^\alpha]+\Delta,\qquad\Delta\in\{-1,0,1\}.
\end{equation}
 We have:
\begin{align*}
&m_1^\alpha-m_2^\alpha=m_1^\alpha-m_0^\alpha+m_0^\alpha-m_2^\alpha\\
&\qquad=\alpha h_1 m_0^{\alpha-1}-\alpha h_2 m_0^{\alpha-1}+O(H^2\,M^{\alpha-2}),\quad H^2\,M^{\alpha-2}\le1.
\end{align*}
From this:
\begin{gather}\label{fin_numb}
[m_1^\alpha-m_2^\alpha]+\Delta=[\alpha\,(h_1-h_2)\,m_0^{\alpha-1}] +\Delta_1\\
\Delta_1\in\{-2,-1,0,1,2\},\quad -H \le h_1,h_2\le H.
\end{gather}

There  are at most  $5(4H+1)$ different numbers represented by right hand side of \eqref{fin_numb}.
By  lemma \ref{war_reg}, the number of solutions to
\begin{equation*}
[n_1^\alpha]\pm[n_2^\alpha]=k,\qquad 0< n_1,n_2\le M^{\alpha-1-\delta}
\end{equation*}
is at most $ CM^{(\alpha-1-\delta)(2-\alpha)}$. Thus the number of pairs $(n_1,n_2)$ with $n_1,m_1$
and $n_2,m_2$ satisfying \eqref{non_uni}, that is \eqref{11} for the same $x$, does not exceed
\begin{equation*}
M^{(\alpha-1-\delta)(2-\alpha)}\cdot 21\,H\le
C\cdot M^{\alpha-1-1.9\delta}.
\end{equation*}
The case of other choices of $\pm$ signs follows exactly the same way.
So we obtained $|A_j|\le M^{\alpha-1-1.9\delta}$.

Let $x$ be of the form
\begin{equation}\label{12}
x=[m^\alpha]\pm [n^\alpha], \qquad n\notin A_j \cup  A_{j-1} \cup  A_{j+1}, [m^\alpha]\in I_j
\end{equation}
Then one can easily verify, that $x\in  I_j \cup  I_{j-1} \cup  I_{j+1}$. We infer that the representation \eqref{12}
 is unique, and it remains unique if we drop the assumption $[m^\alpha]\in I_j$ (we remark that if $n\le M^{ \frac{\alpha-1-1.1\delta}{\alpha}}$ than this statement is immediate and do not require an argument above ).  In particular
 for $x,m,n$ related by \eqref{12}
\begin{equation*}
|\mathbb H_+*\mathbb H_-(x)|\ge \frac{1}{m\cdot n},
\end{equation*}
\begin{equation}\label{Hminus}
\mathbb H_-*\mathbb H_-(x)=0,
\end{equation}
Thus (we leave  the proof  for  the reader)
\begin{equation}\label{wrong_H}
\|\mathbb H_+*\mathbb H_-\|_{\ell^p}\ge
C\Big(\frac{\delta\kappa}{100}\Big)^\frac1p(\log M)^2,\qquad p=1+\frac{1}{\log M}.
\end{equation}
We will show the estimate
\begin{equation}\label{Hplus}
\|\mathbb H_+*\mathbb H_+\|_{\ell^p}\le C \kappa^\frac2p (\log M)^2
\end{equation}
where $p$ is as in \eqref{wrong_H}. We have $\mathbb H_+*\mathbb
H_+=\sum_{\twoline{M^{1-0.1\kappa}\le s_1,s_2\le M}{s_1,s_2-\text{dyadic}}}H_{s_1}*H_{s_1}$. Since
this expression contains at most $C\kappa^2(\log M)^2$ summands, it suffices to prove that $\|
H_{s_1}* H_{s_2}\|_{\ell^p}\le C $. Assume $s_1\ge s_2$. Since $ H_{s_1}* H_{s_2}$ is supported in
$[-Cs_1^{\alpha}, Cs_1^{\alpha}]$, by Cauchy-Schwartz, it suffices to have $\| H_{s_1}*
H_{s_2}\|_{\ell^2}^2\le Cs_1^{-\alpha}$. We have $\| H_{s_1}* H_{s_2}\|_{\ell^2}^2=\left< H_{s_1}*
H_{s_1}, H_{s_2}* H_{s_2}\right>\le C(\frac{1}{s_1s_2}+\frac{s_2^\alpha}{s_1^\alpha s_2^\alpha})$
where, since $ H_{s_2}* H_{s_2}$ is supported in $[-Cs_2^{\alpha}, Cs_2^{\alpha}]$, the last
estimate follows from the lemma \ref{war_reg}. Fix sufficiently small $c>0$ and $\kappa=c\delta$.
From the \eqref{wrong_H}, \eqref{Hplus} and  \eqref{Hminus} we infer that the estimate
\begin{equation*}
\|(\mathbb H_++\mathbb H_-)*(\mathbb H_++\mathbb H_-)\|_{\ell^p}\le\frac{C}{p-1}.
\end{equation*}
cannot hold uniformly with $M$ and $p>1$. By the definition, $\hama$ is antysymmetric
with operator $\ell^2\rightarrow \ell^2$ norm controlled independently of $M$, so it has purely imaginary spectrum
contained in some fixed interval $D\subset i\mathbb R$. Let $\Gamma$ be a contour in $\mathbb C$ enclosing $D$.
Then we have $\|(\lambda I +\hama)^{-1}\|_{ \ell^2 \rightarrow \ell^{2}} \le C$.
Now, if we have $\|(\lambda I +\hama)^{-1}\|_{ {\ell}^1 \rightarrow {\ell}^{1,\infty}} \le C$,
uniformly for $M$ and $\lambda\in \Gamma$,
we should have
$\|(\lambda I +\hama)^{-1}\|_{ \ell^p \rightarrow \ell^{p}} \le \frac{C}{p-1}$. The formula
$\hama^2=\frac{-1}{2\pi i}\oint_\Gamma \lambda^2  (\lambda I +\hama)^{-1}d\lambda$ implies that the estimate
$\|\hama^2\|_{ \ell^p \rightarrow \ell^{p}} \le \frac{C}{p-1}$ holds uniformly in $M$. A contradiction.

{\bf Remark.}
Now we return to a particular case of the result \cite{PZ} announced in Remark (vi) of Section 2. We sketch
the proof of the following fact:  for $\lambda$ fixed and $|\lambda|$ sufficiently large, the operators
$(\lambda+\hama)^{-1}$ are not of weak type (1,1) uniformly in $M$. We will remove large $|\lambda|$
requirement in \cite{PZ}.

Recall, that we have
\begin{equation*}
\hama=\hamp+\hamm
\end{equation*}
with both components comprised of summands with indices in $U_+$ and $U_-$ respectively.
\begin{lemma}\label{lem1}
We have, for $l\ge2$,
\begin{equation*}
\hamp^l=\sum_{s\ge M^{\alpha(1-0.1\kappa)}}{\rm K}^+_{s,l},\qquad\hamm^l=\sum_{s\ge M^{(\alpha-1-1.1\delta)\alpha}}{\rm K}^-_{s,l}.
\end{equation*}
The kernels ${\rm K}^+_{s,l},{\rm K}^-_{s,l}$ satisfy conditions $(i)_s \dots (iv)_s$ with the
constant $|D_s|\le C_0^l$, where $C_0$ is some universal constant.
\end{lemma}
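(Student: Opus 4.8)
The plan is to reduce Lemma \ref{lem1} to the machinery of Section 3, exploiting the fact that although $\hama=\hamp+\hamm$ has its scales in the forbidden range $\theta<\alpha-1$, each of the two blocks \emph{separately} lives in a range of scales to which Theorem \ref{4} applies. Recall from \eqref{H_s_def} that $H_s=\mathcal H_{s^\alpha}$, so $\hamp=\sum_{s\in U_+}\mathcal H_{s^\alpha}$; after regrouping the scales $s^\alpha$ (dyadic $s\in U_+$) into dyadic blocks --- each block containing at most one such scale, since $\alpha<1+\frac1{1000}$ --- the operator $\hamp$ is a truncated Hilbert transform of the form \eqref{hilb_def1} with $N_+:=M^\alpha$ in place of $M$ and $\theta_+:=1-0.1\kappa$ in place of $\theta$. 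Since $0<\alpha-1<\theta_+<1$, all of Section 3 applies: the algebra $A_{N_+}$ of Definition \ref{1} built with $\theta_+$ satisfies the submultiplicativity estimates \eqref{6} and \eqref{7} (with $\epsilon(N_+)\to0$), its distinguished Hilbert transform is $\hamp$ itself --- so $\hamp$ has the representation $0\cdot\textup{Id}+1\cdot\hamp+0$ and $\|\hamp\|_{A_{N_+}}\le1$ --- and the admissible $K$-pieces are supported at scales $\ge N_+^{\theta_+}=M^{\alpha(1-0.1\kappa)}$. The same applies to $\hamm=\sum_{s\in U_-}\mathcal H_{s^\alpha}$ with $N_-:=M^{(\alpha-1-\delta)\alpha}$ and $\theta_-:=\frac{\alpha-1-1.1\delta}{\alpha-1-\delta}$: using the standing assumption $\delta\le\frac{(\alpha-1)^2}{\alpha}$ one checks $\alpha-1<\theta_-<1$ (the inequality $\theta_->\alpha-1$ reduces to $(\alpha-1)(2-\alpha)>\delta\bigl(1.1-(\alpha-1)\bigr)$, which holds with room to spare), and $N_-^{\theta_-}=M^{(\alpha-1-1.1\delta)\alpha}$ is exactly the range in the statement.

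Granting this, I would proceed in two steps, treating $\hamp$ (the case of $\hamm$ being identical). First, iterating \eqref{7} gives $\|\hamp^{\,l}\|_{A_{N_+}}\le C_A^{\,l-1}\le C_0^{\,l}$ with $C_0=\max\{C_A,1\}$ (to be enlarged once below); this constant depends only on the fixed parameters, not on $M$ or $l$. Second, I claim that for $l\ge2$ the power $\hamp^{\,l}$ has a representation with vanishing $\textup{Id}$- and $\hamp$-components, i.e. it is a genuine sum of $K$-pieces. Indeed, in the identity used in the proof of Lemma \ref{1:10},
\[
T\,\tilde T=\lambda\,\tilde T+\tilde\lambda\,T+\sum_s(K_s+\hams)*\tilde{\mathbb T}_s+\sum_s(\tilde K_s+\tilde\hams)*\mathbb T_{2s},
\]
every term except $\lambda\tilde T+\tilde\lambda\,T$ is, by Lemmas \ref{1:6}--\ref{1:9} together with the trivial $\ell^2$-estimates for $K_s*\tilde{\mathbb T}_s$, a sum of kernels satisfying conditions $(i)_s$ through $(iv)_s$ of Definition \ref{1} at scales $\ge M^{\alpha(1-0.1\kappa)}$. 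Since $\lambda(\hamp)=\widehat{\hamp}(0)=0$ (each $H_s$ is antisymmetric, hence has mean zero), a one-line induction on $l$ --- writing $\hamp^{\,l}=\hamp^{\,l-1}\cdot\hamp$ and using $\lambda(\hamp^{\,l-1})=0$ --- shows the $\textup{Id}$-component of $\hamp^{\,l}$ vanishes, and then the displayed identity exhibits $\hamp^{\,l}$ as a pure sum of $K$-pieces (in particular its $\hamp$-component is $0$ as well). One may note in passing that $\hamp$ itself is \emph{not} a sum of $K$-pieces: it fails $(iv)_s$ with a bounded constant, since $\|\mathcal H_{\sigma}(\cdot+h)-\mathcal H_{\sigma}\|_{\ell^2}^2$ is of size $\sigma^{-1/\alpha}$ for every $|h|\ge1$; this already forces the $\hamp$-component of any representation of $\hamp^{\,l}$ to vanish, but the computation above makes the remark unnecessary.

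Combining the two steps gives $\hamp^{\,l}=\sum_{s\ge M^{\alpha(1-0.1\kappa)}}{\rm K}^+_{s,l}$ with the ${\rm K}^+_{s,l}$ obeying $(i)_s$--$(iv)_s$ and $\sup_s D_s\le C_0^{\,l}$, and likewise $\hamm^{\,l}=\sum_{s\ge M^{(\alpha-1-1.1\delta)\alpha}}{\rm K}^-_{s,l}$, which is the assertion of the lemma. I expect the real content to be twofold. First, the parameter check that $\alpha-1<\theta_\pm<1$, which for $\hamm$ is where the bound on $\delta$ is used. Second --- and this is what rules out a bare-hands argument --- the fact that the smallest-scale piece of $\hamp^{\,l}$, the one absorbing the part of the kernel inside $[-M^{\alpha(1-0.1\kappa)},M^{\alpha(1-0.1\kappa)}]$, has $\ell^2$-mass at most $C_0^{2l}M^{-\alpha(1-0.1\kappa)}$ rather than merely $C_0^{2l}$; this smallness is invisible to operator-norm estimates and is exactly what Lemma \ref{war_reg} delivers (through Lemma \ref{1:9}), which is the reason the argument must route through $A_{N_+}$. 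Everything else --- the dyadic regrouping of scales, the telescoping correction restoring vanishing means as in the proof of Lemma \ref{1:4}, and the deduction of \eqref{7} from \eqref{6} via $\|\cdot\|_{\ell^2\to\ell^2}\le C\|\cdot\|_{A_{N_+}}$ --- is routine and already present in Section 3.
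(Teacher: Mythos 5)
Your proof is correct and, despite looking more elaborate, is essentially the paper's argument. The paper disposes of Lemma~\ref{lem1} with the one-line ``Corollary of Lemma~\ref{1:8}'', and the content of that one line is exactly what you spell out: each of $\hamp,\hamm$ is \emph{separately} a truncated Hilbert transform with its own admissible parameter $\theta_\pm\in(\alpha-1,1)$, so the Section~3 machinery (Lemmas~\ref{1:6}--\ref{1:10}, all resting on Lemma~\ref{1:8}) applies to it verbatim, and iterating the submultiplicativity of $\|\cdot\|_{A_{N_\pm}}$ gives the $K$-piece decomposition with $D_s\le C_0^l$. The two places where you add genuine value over the paper's terse reference are worth keeping: the explicit parameter check that $\theta_->\alpha-1$ (which is where the standing hypothesis $\delta\le(\alpha-1)^2/\alpha$ enters and which the paper never writes down), and the observation that for $l\ge2$ one must exhibit a representation of $\hamp^l$ with vanishing $\mathrm{Id}$- and $\hamp$-components --- a point the $A_{N_+}$-norm bound alone does not deliver, since that norm is an infimum over representations with no constraint on $\lambda,\beta$. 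Your induction via the $T\tilde T$ identity of Lemma~\ref{1:10} with $\lambda=\tilde\lambda=0$ handles this cleanly (one small remark: the displayed identity in Lemma~\ref{1:10} as printed omits a $-\lambda\tilde\lambda\,\mathrm{Id}$ term, which is harmless here precisely because $\lambda=\tilde\lambda=0$ in your application). So: same underlying technical input, but your write-up makes explicit the two checks the paper leaves implicit.
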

\begin{proof} Corollary of Lemma \ref{1:8}
\end{proof}
\begin{lemma}\label{lem222}
Let $k\ge3$ and let us consider $\hamp^{k-1}\hamm$ and $\hamm^{k-1}\hamp$. Then, for $p>1$
\begin{equation*}
\|\hamp^{k-1}\hamm\|_{\ell^p\to\ell^p}\le\frac{C_0^{k-1}}{(p-1)^2},
\end{equation*}
and similar estimate for $\hamm^{k-1}\hamp$.
\end{lemma}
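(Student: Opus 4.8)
The plan is to use the structural dichotomy between the two factors. Since $k-1\ge2$, Lemma~\ref{lem1} realizes $\hamp^{k-1}$ as a single discrete Calder\'on-Zygmund operator, whereas the lone factor $\hamm$ is itself a truncated rough Hilbert transform at a fixed truncation exponent, hence of weak type $(1,1)$ uniformly in $M$ by the routine facts recalled in the Introduction. Each of these two properties, combined with uniform $\ell^2$ boundedness, yields by Marcinkiewicz interpolation an $\ell^p$ bound for $1<p\le2$ costing one power of $(p-1)^{-1}$ as $p\to1^+$; multiplying the two bounds gives the asserted $(p-1)^{-2}$.

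First I would bound $\hamp^{k-1}$ on $\ell^p$. By Lemma~\ref{lem1} with $l=k-1$ the kernel of $\hamp^{k-1}$ is $\sum_{s\ge M^{\alpha(1-0.1\kappa)}}{\rm K}^+_{s,l}$ with each ${\rm K}^+_{s,l}$ obeying $(i)_s$--$(iv)_s$ and $|D_s|\le C_0^{k-1}$, so by the remark following Definition~\ref{1} the operator $\hamp^{k-1}$ is a convolution Calder\'on-Zygmund operator with constant $\le C\,C_0^{k-1}$; enlarging $C_0$ so that it also dominates the (uniform in $M$) $\ell^2$ norm of $\hamp$, we get $\|\hamp^{k-1}\|_{\ell^2\to\ell^2}\le C_0^{k-1}$, whence the Calder\'on-Zygmund weak type $(1,1)$ theorem and interpolation with $\ell^2$ give
\begin{equation*}
\|\hamp^{k-1}\|_{\ell^p\to\ell^p}\le\frac{C_0^{k-1}}{p-1},\qquad 1<p\le2,
\end{equation*}
the Calder\'on-Zygmund and interpolation constants, being independent of $k$, having been absorbed into $C_0$. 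Next, $\hamm=\sum_{s\in U_-}H_s$ is a truncated rough Hilbert transform: with $M'=M^{\alpha-1-\delta}$ as truncation parameter its scales run from $(M')^{\theta'}$ to $M'$, where $\theta'=\frac{\alpha-1-1.1\delta}{\alpha-1-\delta}$ is a fixed number in $(0,1)$ and the cutoffs $\varphi_s\equiv\varphi$ lie uniformly in $C_c^\infty(\frac12,2)$; the routine weak type $(1,1)$ bound for such operators at a fixed exponent (the methods of \cite{C1}, \cite{S}, \cite{UZ}) together with the uniform $\ell^2$ bound gives $\|\hamm\|_{\ell^p\to\ell^p}\le C(p-1)^{-1}$ for $1<p\le2$. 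Since convolution operators compose by multiplication of operator norms, a last enlargement of $C_0$ yields
\begin{equation*}
\|\hamp^{k-1}\hamm\|_{\ell^p\to\ell^p}\le\|\hamp^{k-1}\|_{\ell^p\to\ell^p}\,\|\hamm\|_{\ell^p\to\ell^p}\le\frac{C_0^{k-1}}{(p-1)^2},\qquad 1<p\le2,
\end{equation*}
which is the range of $p$ we use; the bound for $\hamm^{k-1}\hamp$ follows the same way, applying Lemma~\ref{lem1} to $\hamm^{k-1}$ and the routine weak $(1,1)$ estimate to $\hamp$, whose scales fill $U_+$ with the fixed exponent $1-0.1\kappa$.

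The only genuinely non-formal inputs are the uniform-in-$M$ weak type $(1,1)$ estimates at a fixed truncation exponent: for the powers $\hamp^{k-1}$ and $\hamm^{k-1}$ these come in the sharper structural form of Lemma~\ref{lem1}, and for the solitary factors $\hamp$, $\hamm$ they are the ``rather routine'' facts from the Introduction. This is exactly where $k\ge3$ is needed: for $k=2$ the product is $\hamp\hamm$, the operator whose $\ell^p$ blow-up \eqref{wrong_H} underlies the counterexample, and Lemma~\ref{lem1} is unavailable for the exponent $l=1$.
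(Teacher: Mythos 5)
Your proof is correct and follows what the paper's one-line proof (``immediate corollary of Lemma~\ref{lem1}'') must implicitly mean: use Lemma~\ref{lem1} with $l=k-1\ge 2$ to realize the $(k-1)$-fold power as a discrete Calder\'on--Zygmund operator with CZ constant $\lesssim C_0^{k-1}$, giving $\|\hamp^{k-1}\|_{\ell^p\to\ell^p}\lesssim C_0^{k-1}(p-1)^{-1}$ for $1<p\le 2$, then multiply operator norms against an $\ell^p$ bound of size $(p-1)^{-1}$ for the lone factor. The one place where you depart from the paper's stated ingredients is the source of that solitary-factor bound: you reparametrize $\hamm$ (resp.\ $\hamp$) as a truncated rough Hilbert transform at a fixed effective exponent $\theta'\in(0,1)$ and invoke the Introduction's ``rather routine'' weak type $(1,1)$ fact plus interpolation. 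That is legitimate here, because $\theta'$ is indeed bounded away from $0$ and $1$ uniformly in $M$ (it depends only on $\alpha$ and the fixed $\delta,\kappa$). However, the paper deliberately avoids leaning on that unproved weak $(1,1)$ assertion: immediately after this lemma, in the proof of the Corollary, the authors note that the near-$\ell^1$ estimate $\|\hama\|_{\ell^p\to\ell^p}\le C(p-1)^{-1}$ (and hence the same for $\hamp,\hamm$) follows by interpolation from the Fourier transform estimates of \cite{D}, with constant independent of $M$ and of the truncation exponent. Substituting that for your weak $(1,1)$ step gives the same conclusion and is slightly preferable, since it is self-contained and uniform in $\theta$. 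Your concluding remark about why $k\ge 3$ is essential --- that $k=2$ would require $l=1$ in Lemma~\ref{lem1}, precisely the case where $\hamp*\hamm$ is the source of the counterexample via \eqref{wrong_H} --- is exactly the right observation.
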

\begin{proof}It is
immediate corollary of Lemma \ref{lem1}.
\end{proof}
\begin{corollary} For $\lambda$ sufficiently large and fixed, we have, for all $M$ sufficiently large, $p=1+\frac{1}{\log M}$
\begin{equation*}
\|(\lambda-\hama)^{-1}\|_{\ell^p\to\ell^p}\ge\frac{C}{(p-1)^2|\lambda|^3}.
\end{equation*}
\end{corollary}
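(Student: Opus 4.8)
The plan is to run the Neumann series
\begin{equation*}
(\lambda-\hama)^{-1}=\sum_{k\ge 0}\lambda^{-k-1}\hama^{k}
\end{equation*}
on $\ell^{p}$ with $p=1+\frac1{\log M}$, and to read off the claimed lower bound from the single summand of order $\lambda^{-3}$ that carries the factor $\hamp\hamm$, using \eqref{wrong_H}. The first thing to secure is that this series actually converges in the $\ell^{p}$ operator norm for a fixed, large $\lambda$, even though $\|\hama\|_{\ell^{p}\to\ell^{p}}$ is itself only $\lesssim\log M$. Writing $\hama^{k}=\sum_{a+b=k}\binom{k}{a}\hamp^{a}\hamm^{b}$ (the factors commute, being convolution operators), the pure powers $\hamp^{a},\hamm^{b}$ with exponent $\ge2$ are, by Lemma~\ref{lem1}, Calder\'on--Zygmund operators with constant $\le C_{0}^{k}$, hence of $\ell^{p}\to\ell^{p}$ norm $\le CC_{0}^{k}(p-1)^{-1}$, while the genuinely mixed products ($a,b\ge1$) are controlled by $CC_{0}^{k}(p-1)^{-2}$ --- via Lemma~\ref{lem222} when $\min(a,b)=1$ and via a composition of two Calder\'on--Zygmund bounds when $\min(a,b)\ge2$. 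Together with elementary $\ell^{1}$-- and $\ell^{2}$--estimates over the $O((\log M)^{2})$ scale products handling $k\le2$, this gives
\begin{equation*}
\big\|\hama^{k}\big\|_{\ell^{p}\to\ell^{p}}\le C\,(2C_{0})^{k}\,(p-1)^{-2},\qquad k\ge 0,
\end{equation*}
so the geometric rate in $k$ is \emph{uniform} in $M$; for $|\lambda|\ge 4C_{0}$ the series converges to $(\lambda-\hama)^{-1}$ and its tail satisfies $\sum_{k\ge 3}|\lambda|^{-k-1}\|\hama^{k}\|_{\ell^{p}\to\ell^{p}}\le C'C_{0}^{3}|\lambda|^{-4}(p-1)^{-2}$.

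Next I would separate the order--$\lambda^{-3}$ mixed term. Since $\hama=\hamp+\hamm$,
\begin{equation*}
(\lambda-\hama)^{-1}=2\lambda^{-3}\,\hamp\hamm+\lambda^{-1}\mathrm{Id}+\lambda^{-2}\hama+\lambda^{-3}\big(\hamp^{2}+\hamm^{2}\big)+\sum_{k\ge 3}\lambda^{-k-1}\hama^{k}.
\end{equation*}
For the main term, testing against $\delta_{0}$ and using that $\hamp\hamm\,\delta_{0}=\hamp*\hamm$ as kernels, \eqref{wrong_H} together with $\log M=(p-1)^{-1}$ gives
\begin{equation*}
\big\|2\lambda^{-3}\hamp\hamm\big\|_{\ell^{p}\to\ell^{p}}\ \ge\ 2|\lambda|^{-3}\,\|\hamp*\hamm\|_{\ell^{p}}\ \ge\ \frac{c_{0}}{|\lambda|^{3}(p-1)^{2}},
\end{equation*}
with $c_{0}=c_{0}(\delta,\kappa)>0$ independent of $M$ and $\lambda$.

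It then remains to see that the other four pieces are of strictly smaller size. One has $\|\lambda^{-1}\mathrm{Id}\|=|\lambda|^{-1}$; $\|\lambda^{-2}\hama\|_{\ell^{p}\to\ell^{p}}\le C|\lambda|^{-2}(p-1)^{-1}$ (a sum of $O(\log M)$ single scales, each uniformly bounded on $\ell^{1}$ and $\ell^{2}$); $\|\lambda^{-3}(\hamp^{2}+\hamm^{2})\|_{\ell^{p}\to\ell^{p}}\le CC_{0}^{2}|\lambda|^{-3}(p-1)^{-1}$ by Lemma~\ref{lem1}; and the tail is $\le C'C_{0}^{3}|\lambda|^{-4}(p-1)^{-2}$. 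Now fix $|\lambda|$ large enough that $C'C_{0}^{3}|\lambda|^{-1}\le c_{0}/4$, and then take $M$ large, \emph{depending on} $\lambda$, so that $(p-1)^{-1}=\log M$ exceeds suitable fixed multiples of $|\lambda|$ and of $|\lambda|^{2}$; each of the four remainder pieces is then $\le\frac{c_{0}}{4}|\lambda|^{-3}(p-1)^{-2}$, and subtracting from the main term yields $\|(\lambda-\hama)^{-1}\|_{\ell^{p}\to\ell^{p}}\ge c_{0}|\lambda|^{-3}(p-1)^{-2}$, which is the assertion with $C=c_{0}$.

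The main obstacle is the interplay of the two largeness conditions rather than any single estimate: $|\lambda|$ large is needed only to make the geometric Neumann tail $\sum_{k\ge3}$ negligible against the order--$\lambda^{-3}$ main term, whereas $M$ large \emph{depending on} $\lambda$ is genuinely forced, because the order--$\lambda^{-1}$ and order--$\lambda^{-2}$ remainder pieces are merely $O(1)$, resp.\ $O(\log M)$, in $\ell^{p}$ norm and are overtaken by the main term $\sim(\log M)^{2}|\lambda|^{-3}$ only once $\log M\gg|\lambda|^{2}$. The one conceptual (as opposed to bookkeeping) point is that, by Lemmas~\ref{lem1} and \ref{lem222}, a $k$-th power $\hama^{k}$ inflates the $\ell^{p}$ norm only by the \emph{fixed} factor $(\log M)^{2}$ and not by $(\log M)^{k}$ --- which is precisely what keeps the series summable with an $M$-independent radius of convergence.
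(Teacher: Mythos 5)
Your argument is correct and follows essentially the same route as the paper: Neumann series in $\ell^p$ with $p=1+1/\log M$, tail bound for $k\ge 3$ via Lemmas \ref{lem1}--\ref{lem222}, and a lower bound on the order-$\lambda^{-3}$ term coming from the cross kernel $\hamp*\hamm$ via \eqref{wrong_H}. The only cosmetic difference is that you explicitly peel off $2\hamp\hamm$ and bound $\hamp^2+\hamm^2$ separately by the Calder\'on--Zygmund operator bound of Lemma \ref{lem1}, whereas the paper keeps $\hama^2$ intact and invokes the kernel estimates from the proof of Theorem \ref{4_ex} (i.e.\ \eqref{wrong_H}, \eqref{Hplus}, \eqref{Hminus}) all at once; both lead to the same conclusion.
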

\begin{proof}
\begin{gather*}
(\lambda-\hama)^{-1}=\sum_{k=0}^\infty\frac{\hama^k}{\lambda^{k+1}},\\
\hama^k=(\hamp+\hamm)^k=\sum_{j=0}^k\binom{k}{j}\hamp^{k-j}\hamm^j.
\end{gather*}
Using Lemmas \ref{lem1} and  \ref{lem222}, for $p>1$ we have:
\begin{equation*}
\left\|\sum_{k=3}^\infty\frac{\hama^k}{\lambda^{k+1}}\right\|_{\ell^p\to\ell^p}\le\frac{C}{|\lambda|^4(p-1)^2},
\end{equation*}
moreover, by independent of $M$ near $\ell^1$ estimate  $\|\hama\|_{\ell^p\rightarrow\ell^p}\le
C(p-1)^{-1}$,
\begin{equation*}
\left\|\frac{\hama^2}{\lambda^3}+\frac{\hama}{\lambda^2}+\frac{\delta_0}{\lambda}\right\|_{\ell^p\to\ell^p}\ge
\frac{1}{|\lambda|^3}\|\hama^2\|_{\ell^p\to\ell^p}-\frac{C}{|\lambda|^2(p-1)}-\frac{1}{|\lambda|}.
\end{equation*}
For the estimate $\|\hama\|_{\ell^p\rightarrow\ell^p }\le C(p-1)^{-1}$
  one does not need the  weak type $(1,1)$ estimates on  $\hama$. Classical interpolation argument
 based on the Fourier transform estimates of $\hama$ (\cite{D}),
 produce constant $C$ independent on $M,\theta$. We leave the details for the interested reader.

Thus, for $\lambda$ and $M$ sufficiently large, $p=1+\frac{1}{\log M}$ we have
\begin{equation*}
\|(\lambda-\hama)^{-1}\|_{\ell^p\to\ell^p}\ge\frac{C}{(p-1)^2|\lambda|^3},
\end{equation*}
as in the proof of Theorem \ref{4_ex}.
\end{proof}

\end{document}